\documentclass[12pt,oneside,reqno]{amsart}
\usepackage{amsmath,amssymb, amsthm,mathtools}
\usepackage[margin=1in]{geometry}
\usepackage[parfill]{parskip}
\usepackage{enumitem}
\usepackage{tikz}
\usepackage{tikz-cd} 
\usepackage{dsfont}
\usepackage{mathrsfs}
\usepackage{bbm}
\usepackage{dirtytalk}
\usepackage{bm}
\usepackage{microtype}
\usepackage{caption}
\usepackage{subcaption}
\usepackage{array}
\usepackage{longtable}

\providecommand{\N}{\mathbb{N}}

\providecommand{\R}{\mathbb{R}}

\providecommand{\Z}{\mathbb{Z}}

\DeclareMathOperator{\Tr}{Tr}

\usepackage{centernot}

\newcommand{\paren}[1]{\left( #1 \right)}
\newcommand{\brac}[1]{\left[ #1 \right]}

\newcommand{\abs}[1]{\left\vert#1\right\vert}

\newcommand{\set}[1]{\left\{#1\right\}}

\DeclareMathOperator{\im}{im}

\newtheorem{Theorem}{Theorem}
\newtheorem{Lemma}[Theorem]{Lemma}
\newtheorem{Definition}[Theorem]{Definition}
\newtheorem{Proposition}[Theorem]{Proposition}

\newtheorem{Corollary}[Theorem]{Corollary}
\newtheorem{Conjecture}[Theorem]{Conjecture}

\newcounter{cnstcnt}

\usepackage{graphicx}
\begin{document}
\title[PLGT Correlation Length]{A Topological Formula for \\ Potts Lattice Gauge Theory Correlations}
\author{Paul Duncan}
\email{pauldunc@iu.edu}
\address{Department of Mathematics, Indiana University, Bloomington, IN 47408, USA}
\author{Benjamin Schweinhart}
\email{bschwei@gmu.edu}
\address{Department of Mathematical Sciences, George Mason University, Fairfax, VA 22030, USA}

\begin{abstract}
We exhibit a formula relating the correlation between Wilson loop variables in Potts lattice gauge theory to a topological quantity in the plaquette random cluster model.
As applications we show that the correlation length of the model on $\Z^4$ with free boundary conditions equals that of the dual model with constant boundary conditions, we prove exponential decay of correlations between slowly growing Wilson loop variables for Ising lattice gauge theory on $\Z^3$ at all but the critical temperature, and we demonstrate that the correlation length is finite at sufficiently high or low temperatures in any dimension.
\end{abstract}
\maketitle

\section{Introduction}
Lattice gauge theories were introduced by Wilson~\cite{wilson1974confinement} as discretized versions of Yang-Mills theory, with the special case of Ising lattice gauge theory being defined earlier by  Wegner~\cite{wegner1971duality}.
These models assign group-valued spins to the edges of a cell complex $X$ according to the Wilson action, which is defined in terms of the product of spins on the boundary of each two-dimensional face. Here we study the specific case of the Potts lattice gauge theory (PLGT)~\cite{kogut1980z}, 
where spins in the multiplicative group of complex $q$-th roots of unity $\Z\paren{q}$ are placed on the edges, with energy determined by whether the product of spins around each face is one. That is, for a function $f$ assigning spins to the edges of $X$ define 
$$H\paren{f}=-\sum_{\sigma\in X}I_{\delta f\paren{\sigma}=1}\,$$
where $\sigma$ ranges over the $2$-cells (often called plaquettes) of $X$ and  $\delta f\paren{\sigma}$ is the (oriented) product of the spins assigned to the edges incident to $\sigma.$ PLGT is a natural analogue of the classical Potts model which assigns spins to vertices and has an action on the edges.
The special case of $\Z\paren{2}$ lattice gauge theory has found independent motivation from its relationship with Kitaev's toric code~\cite{agrawal2025geometric, dennis2002topological}.

The most important observable for the Potts model is the spin correlation function $\tau\paren{v_i,v_j}.$ Due to the gauge symmetry of Potts lattice gauge theory, there is no information in the correlations between spins on individual edges. As such, the observables of interest involve Wilson loop variables $W_\gamma$, the product of the spins around loops $\gamma$. There is a growing literature on the asymptotics of the expectations of Wilson loop variables in Ising/Potts lattice gauge theory~\cite{chatterjee2020wilson,laanait1989discontinuity,duncan2025sharp,aizenman2025geometric,forsstrom2025free} as part of the larger program on Euclidean lattice gauge theories; see~\cite{chatterjee2016yang} for a survey and also~\cite{cao2020wilson,garban2023improved,forsstrom2022decay,cao2025random} for a selection of more recent work. This can be thought of as a generalization of $\tau\paren{v_i,v_j},$ because the Wilson loop variable involves the product of spins around the boundary of a two-dimensional rectangle and $\tau\paren{v_i,v_j}$ is the product of the spins on the boundary of a ``one-dimensional rectangle''. A second natural generalization of the spin correlation function is the correlation between two Wilson loop variables. The minimal example of such an observable is found by taking the loops to be the boundaries of two individual two-dimensional faces at varying distances.

This suggests two natural quantities of interest: the asymptotic properties of Wilson loop variables as the size of the loop is taken to $\infty$ and the asymptotic correlation of two Wilson loop variables of a fixed size as they are taken further apart. Both of these are believed to have the same critical point, though it is unclear whether the latter exhibits discontinuous behavior for any choice of $q.$ We state two conjectures for the behavior of PLGT. We note that they are natural analogues of conjectures for Euclidean lattice gauge theory; see~\cite{chatterjee2016yang}.

Let $\nu=\nu\paren{\beta,q,d}$ be Potts lattice gauge theory on $\Z^d$ defined as a weak limit of finite volume measures.

\begin{Conjecture}[Area Law/Perimeter Law]
  \label{conj:sharpnessPLG}
There exists a constant $0<\beta_c\paren{q,d}<\infty$ and constants $a\paren{\beta,q},b\paren{\beta,q}<\infty$ so that for rectangular boundaries  $\gamma$ in $\Z^d,$
\[
\nu_{\beta,q,d}(W_\gamma)\sim \begin{cases}  \exp(-a(\beta,q) \mathrm{Area}(\gamma)) &\qquad \beta <\beta_{c}(q,d)\\    \exp(- b\paren{\beta,q}\mathrm{Per}(\gamma)) &\qquad \beta >\beta_{c}(q,d)\end{cases}\,.
\]
as the dimensions of $\gamma$ are taken to $\infty$ suitably. In addition, $\beta_{c}(q,4)=\beta_{sd}(q,4)=\log\paren{1+\sqrt{q}}$ is the self-dual point. 
\end{Conjecture}
In previous work~\cite{duncan2025sharp}, the authors proved this for $d=3$ conditional on a conjecture of Pisztora~\cite{pisztora1996surface} on the regularity of the random cluster model in slabs. The proof is unconditional for $q=2;$ see~\cite{aizenman2025geometric} for an alternative proof in that case. 

To state the second conjecture, let $\sigma_N$ be the plaquette  $\brac{0,1}^2\times\set{N}\times\set{0}.$ For a plaquette $\sigma$ let $W_{\sigma}=W_{\partial\sigma}$ be the associated Wilson loop variable. Define the \textbf{correlation length} $\xi_{\beta}=\xi_{\beta,q,d}$ via the limit
 $$-\frac{1}{\xi_{\beta}}=\lim_{N\to\infty} \frac{\log\paren{\mathrm{Cov}_{\nu}\paren{W_{\sigma_1},W_{\sigma_N}^{-1}}}}{N}\,.$$ The existence of this limit is a consequence of reflection positivity, via an argument nearly identical to that of Theorem 4.1 in~\cite{borgs1996covariance}. We note that the usual proof of the corresponding statement for the Potts model using subadditivity and the the random cluster representation does not obviously generalize. This will become evident when we state the topological formula for Wilson loop correlations below.

\begin{Conjecture}[Mass Gap]\label{conj:massgapPLGT}
Let $\beta_c\paren{q,d}$ be the same inverse temperature as in Conjecture~\ref{conj:sharpnessPLG}. Then $0<\xi_{\beta,q,d}<\infty$ for all $\beta\neq \beta_c\paren{q,d}.$ 
\end{Conjecture}
The correlation length between spins on vertices in the Potts model has been of significant interest~\cite{duminil2020exponential,duminil2021discontinuity,duminil2017continuity, borgs1996covariance}, though a direct analogue of Conjecture~\ref{conj:massgapPLGT} would instead consider correlations between spin differences on distant edges. In analogy with the planar Potts model, one might guess that the limit $\lim_{\beta\nearrow\beta_c\paren{q,d}} \xi_{\beta}$ is infinite for small $q$ and finite for large $q.$

We study the behavior of the correlation length through the plaquette random cluster model (PRCM). The PRCM~\cite{hiraoka2016tutte,duncan2025sharp,shklarov} is a cellular representation of PLGT, generalizing relationship between the classical random cluster model and the Potts model. Specifically, the two-dimensional PRCM with free boundary conditions on a finite cell complex $X$ with parameters $p\in\brac{0,1}$ and $q\in\N_{\geq 2}$ is the random $2$-complex $P$ with  distribution
$$\mu^{\mathbf{f}}_{X}\paren{P} = \mu^{\mathbf{f}}_{X,p,q}\paren{P} \propto p^{\abs{P}}\paren{1-p}^{\abs{X^{\paren{2}}} - \abs{P}}\abs{H^1\paren{P;\Z_q}}$$
where $\abs{X^{\paren{2}}}$ and $\abs{P}$ denote the number of plaquettes in $X$ and $P,$ respectively, and $H^1\paren{P;\Z_q}$ is the first cohomology group of $P$ with coefficients in the group $\Z_q$ of integers modulo $q.$ Roughly speaking, $\abs{H^1\paren{P;\Z_q}}$ counts the number of ``linearly independent surfaces'' in $P.$ See Section~\ref{sec:overview} for a rigorous definition. For a box $\Lambda\subset \Z^d$, the PRCM with wired boundary conditions on $\Lambda$ is the measure $\mu^{\mathbf{w}}_{\Lambda}\paren{P}$ obtained by replacing $H^1\paren{P;\Z_q}$ with $H^1\paren{P\cup \partial \Lambda;\Z_q}.$  

The PRCM $P$ and PLGT $f$ with free boundary conditions are coupled so that the conditional distribution of $P$ given $f$ is independent plaquette percolation on the plaquettes satisfying $\delta f=1$ and the conditional distribution of $f$ given $P$ is the uniform distribution on the $1$-dimensional cocycles of $P$: $\set{f:\delta f\paren{\sigma}=1\forall \sigma \in P}.$ The same relationship holds between the PRCM with wired boundary conditions and PLGT with constant or wired boundary conditions. 

 This coupling generalizes to $i$-dimensional Potts (hyper)lattice gauge theory $\nu_{X,\beta,q,d,i},$ which assigns spins to the $i$-cells of a cell complex, and its cellular representation by the $(i+1)$-dimensional PRCM $\mu_{X,p,q,i}.$ The $(i+1)$-dimensional PRCM on $X$ is a random percolation subcomplex of $X$: a subcomplex that contains all cells of dimensions $0\leq i\leq i-1$ and some of the cells of dimension $i.$  We defer the definitions of these models to Section~\ref{sec:overview} but state all of our results for them since they hold with identical proofs.
 These more general notions will be useful at some points in proofs relevant to the case $i=1,$ for example when showing finiteness of the correlation length in the low temperature regime. In order to state our results we will also need notation for PLGT with various boundary conditions. For a box $\Lambda,$ denote by $\nu^{\mathbf{f}}_{\Lambda,\beta,q,d,i}=\nu_{\Lambda,\beta,q,d,i}$ PLGT with free boundary conditions, $\nu^{1}_{\Lambda,\beta,q,d,i}$ PLGT with constant boundary conditions, and $\nu^{\mathbf{w}}_{\Lambda,\beta,q,d,i}$ PLGT with wired boundary conditions. Constant boundary conditions impose the requirement that $\delta f\mid_{\partial \Lambda}=1.$ Gauge invariant observables have the same expectations with respect to constant and wired boundary conditions, so we state our results for wired boundary conditions for notational simplicity.

Using the coupling between PLGT and the PRCM, we can convert statistics on observables in the former to topological questions in the latter. For simplicity, we restrict consideration to the PRCM on a general box $\Lambda,$ on the cube $\Lambda_N \coloneqq\brac{-N,N}^d,$ or on all of $\Z^d.$ 
\begin{Definition}
Let $P$ be an $(i+1)$-dimensional percolation subcomplex of $X.$ For $\gamma\in Z_i\paren{X;\Z_q}$ let $V_{\gamma}=V^{\mathbf{f}}_{\gamma}\paren{q}$ be the event that $\brac{\gamma}=0$ in $H_i\paren{P;\Z_q}.$ Similarly, set  $V^{\mathbf{w}}_{\gamma}=V^{\mathbf{w}}_{\gamma}\paren{q}$ to be the event that  $\brac{\gamma}=0$ in $H_i\paren{P\cup \partial \Lambda;\Z_q}$ if $X=\Lambda$ and to be the event that $\brac{\gamma}=0$ in $H_i\paren{P\cup \partial \Lambda_N;\Z_q}$ for all sufficiently large $N$ if $X=\Z^d.$ 
\end{Definition}
Roughly speaking, $V_{\gamma}$ is the event that $\gamma$ is the boundary of a surface of plaquettes. This definition will also be made precise in Section~\ref{sec:overview}. It turns out that the Wilson loop expectations in PLGT is the same as the probability of $V_{\gamma}$ in the PRCM with appropriate parameters.

\begin{Theorem}[\cite{duncan2025topological,duncan2025sharp,shklarov}]
\label{thm:comparisongeneral}
Let $0<i<d-1,$ let $X = \Z^d$ or let $X = \Lambda \subset \Z^d$ be a box, let $\gamma$ be an $(i-1)$-cycle in $X,$ let $q\in\N+1,$ let $\#\in\set{\mathbf{f},\mathbf{w}},$ and let $\nu=\nu_{X,\beta,q,i-1}^{\#}$ and $\mu=\mu^{\#}_{X,1-e^{-\beta},\Z_q,i}.$
Then
\[\mathbb{E}_{\nu}\paren{W_{\gamma}}=\mu\paren{V_{\gamma}^\#}\,,\]
\end{Theorem}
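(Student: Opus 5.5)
We prove the identity first for a finite box $X=\Lambda$, using the Edwards--Sokal type coupling between PLGT and the PRCM described above. The case $X=\Z^d$ then follows by taking weak limits.

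Fix $p=1-e^{-\beta}$. Define the joint measure on pairs $(f,P)$, with $f$ an $(i-1)$-cochain with values in $\Z(q)$ (or with the prescribed boundary condition) and $P$ a set of $i$-cells, by
\[
\kappa(f,P)\propto \prod_{\sigma}\Big((1-p)\mathbbm{1}_{\sigma\notin P}+p\,\mathbbm{1}_{\sigma\in P}\mathbbm{1}_{\delta f(\sigma)=1}\Big).
\]
Summing over $P$ gives, for each $\sigma$, the factor $(1-p)+p\mathbbm{1}_{\delta f(\sigma)=1}$. This is proportional to $e^{\beta\mathbbm{1}_{\delta f(\sigma)=1}}$, so the $f$-marginal is $\nu$.

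Summing over $f$ instead gives $p^{|P|}(1-p)^{|X^{(i)}|-|P|}$ times the number of cocycles $f$ with $\delta f|_P=1$. This number is $|Z^{i-1}(P;\Z_q)|=|H^{i-1}(P;\Z_q)|\cdot|B^{i-1}|$. The factor $|B^{i-1}|$ depends only on the full lower skeleton, so it is constant in $P$, and the $P$-marginal is $\mu^{\mathbf f}$. For wired or constant boundary conditions the same computation runs with $P\cup\partial\Lambda$ in place of $P$.

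Conditionally on $P$, the cochain $f$ is uniform on the cocycle group $Z^{i-1}(P\,[\cup\partial\Lambda];\Z(q))$. Hence
\[
\mathbb{E}_\nu(W_\gamma)=\mathbb{E}_\mu\big[\mathbb{E}(W_\gamma\mid P)\big].
\]

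The key step is the following lemma. For a finite complex $Y$ containing the full $(i-1)$-skeleton and an $(i-1)$-cycle $\gamma$, the average of $f(\gamma)$ over uniform $f\in Z^{i-1}(Y;\Z(q))$ equals $1$ if $[\gamma]=0$ in $H_{i-1}(Y;\Z_q)$, and $0$ otherwise.

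\emph{Proof of the lemma.} The map $f\mapsto f(\gamma)$ is a character of the finite abelian group $Z^{i-1}(Y;\Z(q))$. The average of a character is $1$ if it is trivial and $0$ otherwise.
\begin{itemize}
\item If $\gamma=\partial c$ with $c$ a $\Z_q$-chain in $Y$, then $f(\gamma)=\delta f(c)=1$ for every cocycle $f$, so the character is trivial.
\item Conversely, suppose $[\gamma]\neq0$ in $H_{i-1}(Y;\Z_q)$. By the universal coefficient theorem over $\Z_q$, evaluation gives a perfect pairing between $H^{i-1}(Y;\Z_q)$ and $H_{i-1}(Y;\Z_q)$, with values in $\Z_q\cong\Z(q)$. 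This holds even though $\Z_q$ is not a field: $\Z_q$ is self-injective, so $\mathrm{Hom}_{\Z_q}(-,\Z_q)$ is exact and $H^{i-1}\cong\mathrm{Hom}(H_{i-1},\Z_q)$. Any nonzero element of a finite $\Z_q$-module is detected by some homomorphism to $\Z_q$, so some cocycle satisfies $f(\gamma)\neq1$ and the character is nontrivial.
\end{itemize}
We expect this duality step to be the only real subtlety, specifically the case of non-prime $q$.

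Applying the lemma with $Y=P$ (free) or $Y=P\cup\partial\Lambda$ (wired) gives $\mathbb{E}(W_\gamma\mid P)=\mathbbm{1}_{V^\#_\gamma}$. This proves the theorem on finite boxes.

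For $X=\Z^d$, take $\nu$ and $\mu$ as weak limits over $\Lambda_N$. The left side converges because $W_\gamma$ is a local observable. On the right, in the free case $V_\gamma$ need not be local. We handle it by monotone approximation: the events $\{[\gamma]=0 \text{ in } H_{i-1}(P\cap\Lambda_M)\}$ increase in $M$ to $V_\gamma$, and each of them is local. The wired event is defined as a limit in $N$ and is treated the same way, using decreasing approximations.

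We then need the interchange of these limits with the limits defining the measures. For this we would use the monotonicity of the PRCM in boundary conditions and in volume (FKG and comparison of boundary conditions), which are assumed available from the cited works. We expect this limit interchange, rather than the finite-volume identity, to be where care is needed.
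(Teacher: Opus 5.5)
Your proposal is correct and follows the standard argument behind this cited result. The paper gives no proof of its own, but it reproduces exactly the coupling of Proposition~\ref{prop:couple} and uses the fact that, conditionally on $P$, the spins are uniform on the cocycles of $P$ (respectively $P\cup\partial\Lambda$). Hence $\mathbb{E}\paren{W_\gamma\mid P}=\mathbbm{1}_{V^{\#}_\gamma}$ by character orthogonality together with the evaluation duality between $H^{i-1}$ and $H_{i-1}$ with $\Z_q$ coefficients, and your self-injectivity argument correctly handles non-prime $q$. The passage to $\Z^d$ also goes through as you sketch: by monotonicity in the volume, $\mu^{\mathbf{f}}_{\Lambda_N}\paren{\brac{\gamma}=0 \text{ in } H_{i-1}\paren{P;\Z_q}}$ is nondecreasing in $N$ and bounded above by $\mu^{\mathbf{f}}\paren{V_\gamma}$, while the wired approximating events are indeed decreasing in $N$ and can be sandwiched in the same way.
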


Our first main result expands the dictionary between PLGT and the PRCM to cover correlations between pairs of loops in addition to expectations of single loops. In order to state it, we introduce another topological event.

\begin{Definition}
Let $P$ be an $(i+1)$-dimensional percolation subcomplex of $X$. For $\gamma,\gamma'\in Z_i\paren{X}$ define $V_{\gamma,\gamma'}$ be the event that $\brac{\gamma}=\brac{\gamma'}\neq 0$ in $H_i\paren{P;\Z_q}$.Similarly, let $V_{\gamma,\gamma'}^{\mathbf{w}}$ be the event that $\brac{\gamma}=\brac{\gamma'}\neq 0$ in $H_{i-1}\paren{P\cup \partial \Lambda;\Z_q}$ if $X=\Lambda$ and the event that $\brac{\gamma}=\brac{\gamma'}\neq 0$ in $H_{i-1}\paren{P\cup \partial \Lambda_N;\Z_q}$ for sufficiently large $N$ if $X=\Z^d.$  
\end{Definition}

\begin{figure}[ht]
\center
\includegraphics[width=0.6\textwidth]{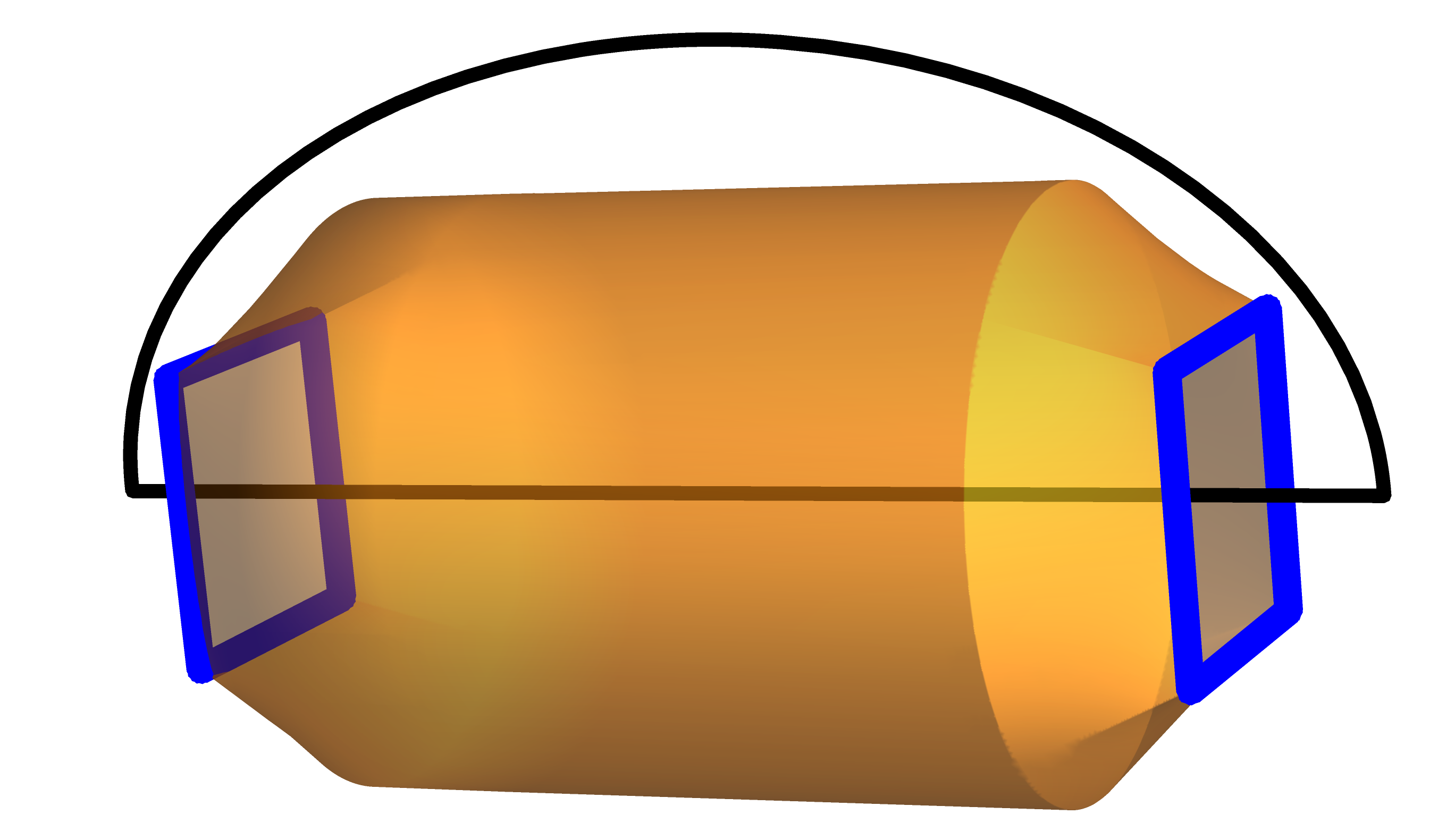}
\caption{An illustration of the event $V_{\gamma,\gamma'}.$ $\gamma$ and $\gamma'$ are thick blue squares and the plaquette surface connecting them is depicted in orange. The events $V_{\gamma}$ and $V_{\gamma'}$ are precluded by the open dual path shown as black.} 
\label{fig:vdoublegamma}
\end{figure}

\begin{Theorem}\label{thm:correlation}
Under the same hypotheses as Theorem~\ref{thm:comparisongeneral}, 
    $$\mathrm{Cov}_{\nu}\paren{W_{\gamma},W_{\gamma'}^{-1}}= \mu\paren{V_{\gamma,\gamma'}^{\#}}+\mathrm{Cov}_{\mu}\paren{V^{\#}_{\gamma},V^{\#}_{\gamma'}}\,.$$

\end{Theorem}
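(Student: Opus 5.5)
We use the Edwards--Sokal type coupling between $\nu$ and $\mu$ described above: conditional on $P$, the spin configuration $f$ is uniform on the $i$-cocycles of $P$ (with the appropriate boundary condition $\#$; for the wired case we work in $P\cup\partial\Lambda$). Since $\mathrm{Cov}_\nu(W_\gamma,W_{\gamma'}^{-1})=\mathbb{E}_\nu(W_\gamma W_{\gamma'}^{-1})-\mathbb{E}_\nu(W_\gamma)\mathbb{E}_\nu(W_{\gamma'}^{-1})$, and Theorem~\ref{thm:comparisongeneral} gives $\mathbb{E}_\nu(W_\gamma)=\mu(V_\gamma^\#)$ and $\mathbb{E}_\nu(W_{\gamma'}^{-1})=\mathbb{E}_\nu(W_{-\gamma'})=\mu(V_{\gamma'}^\#)$ (the event $V_{-\gamma'}$ coincides with $V_{\gamma'}$), it suffices to prove
\[
\mathbb{E}_\nu\bigl(W_\gamma W_{\gamma'}^{-1}\bigr)=\mu\bigl(V_{\gamma,\gamma'}^\#\bigr)+\mu\bigl(V_\gamma^\#\cap V_{\gamma'}^\#\bigr).
\]
Subtracting $\mu(V_\gamma^\#)\mu(V_{\gamma'}^\#)$ then yields the claimed identity.

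The key step is the conditional computation. Observe that $W_\gamma W_{\gamma'}^{-1}=W_{\gamma-\gamma'}$ is itself a Wilson loop variable, evaluated on the cycle $\gamma-\gamma'$. Conditional on $P$, the spin $f$ is a uniformly random element of the finite group $Z^i(P;\Z_q)$ of cocycles, and $f\mapsto W_{\gamma-\gamma'}(f)=\exp\bigl(2\pi \sqrt{-1}\,\langle f,\gamma-\gamma'\rangle/q\bigr)$ is a character of this group. That character is trivial on coboundaries, so it factors through $H^i(P;\Z_q)$. By character orthogonality, the conditional expectation equals $1$ if the character is trivial and $0$ otherwise. Via the universal coefficient pairing between $H^i(P;\Z_q)$ and $H_i(P;\Z_q)$ (which is perfect over the field or ring $\Z_q$ here, since we pair cocycles with $\Z_q$-cycles), triviality holds exactly when $[\gamma-\gamma']=0$ in $H_i(P;\Z_q)$, that is, when $[\gamma]=[\gamma']$. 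This is the same argument that proves Theorem~\ref{thm:comparisongeneral}, applied to $\gamma-\gamma'$. It gives
\[
\mathbb{E}_\nu\bigl(W_\gamma W_{\gamma'}^{-1}\bigr)=\mu\bigl([\gamma]=[\gamma']\bigr)=\mu\bigl(V^\#_{\gamma-\gamma'}\bigr).
\]
Finally, we split the event $\{[\gamma]=[\gamma']\}$ according to whether the common class is zero or nonzero. These two pieces are exactly $V_\gamma\cap V_{\gamma'}$ and $V_{\gamma,\gamma'}$, and they are disjoint.

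The main obstacle is making the conditional character computation rigorous in each setting. For wired boundary conditions, the cocycles of $P\cup\partial\Lambda$ must be matched with PLGT with wired/constant boundary conditions, and the homology must be taken in $P\cup\partial\Lambda$. The definition of $V^{\mathbf{w}}_{\gamma,\gamma'}$ should be read in that degree; the $H_{i-1}$ written there appears to be an index shift, and we use the same indexing convention as in Theorem~\ref{thm:comparisongeneral}. For $X=\Z^d$, we first prove the identity on boxes $\Lambda_N$ and pass to the limit. The left side converges by the definition of $\nu$ as a weak limit of the finite-volume measures. On the right side, $\{[\gamma]=[\gamma'] \text{ in } P\cup\partial\Lambda_N \text{ for large } N\}$ is a monotone limit of events, and we would use the convergence $\mu_{\Lambda_N}\to\mu$ already invoked in Theorem~\ref{thm:comparisongeneral} to conclude. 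We would handle this limit exactly as in the proof of that theorem, applied to the single cycle $\gamma-\gamma'$, so the infinite-volume case reduces to that result with no new input.
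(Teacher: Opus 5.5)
Your proposal is correct and follows the paper's proof: apply Theorem~\ref{thm:comparisongeneral} to the single cycle $\gamma-\gamma'$ to get $\mathbb{E}_\nu(W_\gamma W_{\gamma'}^{-1})=\mu([\gamma]=[\gamma'])$, split that event according to whether the common class is zero, and subtract $\mu(V^{\#}_\gamma)\mu(V^{\#}_{\gamma'})$. Your additional points are accurate details the paper leaves implicit: the character-orthogonality justification (valid for all $q$, since $\Z_q$ is self-injective), the observation $V_{-\gamma'}=V_{\gamma'}$, and the $H_{i-1}$ index slip in the wired definition.
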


This is a direct analogue of theorems which show that the truncated spin correlations in the Potts model coincide with truncated connection probabilities in the RCM. As applications, we prove the following two results. The correlation length $\xi^{\#}_{\beta,q,d,i}$ for $i$-dimensional PLGT with $\#$ boundary conditions is defined analogously to the case $i=1.$

\begin{Theorem}\label{thm:highlowtemp}
Let $q\geq 2$ and $d \geq 3.$ Then $0<\xi^{\#}_{\beta,q,d,i}<\infty$ when $\beta$ is sufficiently large or small. 
\end{Theorem}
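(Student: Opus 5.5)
Theorem~\ref{thm:correlation} converts the statement into two exponential bounds: an upper bound, which gives $\xi<\infty$, and a lower bound, which gives $\xi>0$. Take $\gamma=\partial\sigma_1$ and $\gamma'=\partial\sigma_N$ (more generally, boundaries of $(i+1)$-cells at distance $N$). Then
$$\mathrm{Cov}_{\nu}\paren{W_{\gamma},W_{\gamma'}^{-1}}=\mu\paren{V^{\#}_{\gamma,\gamma'}}+\mathrm{Cov}_{\mu}\paren{V^{\#}_{\gamma},V^{\#}_{\gamma'}}.$$
We must show this quantity is at most $e^{-cN}$ and at least $e^{-CN}$, uniformly in the volume.

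For the lower bound, both terms are nonnegative. The first is a probability. The second is nonnegative by FKG for the PRCM, since $V_\gamma$ is increasing in $P$; this needs positive association of the PRCM, which I will assume holds for the relevant $q$ or can be derived from the coupling. It then suffices to bound $\mu(V_{\gamma,\gamma'})$ below. Use finite energy: each plaquette is open with conditional probability bounded between $p/(p+q(1-p))$ and $p$, uniformly. Force open the $O(N)$ plaquettes of a tube joining $\sigma_1$ to $\sigma_N$, namely the lateral surface of the prism $\gamma\times[1,N]$. Force closed the $O(1)$ plaquettes $\sigma_1$, $\sigma_N$ and a few neighbors, so that $\gamma$ is homologous to $\gamma'$ but not null-homologous. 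Keeping $\gamma$ from bounding anything elsewhere is not a local event. Instead, condition on a configuration where a dual "path" (a closed $(d-i-1)$-dual surface linking $\gamma$) is open; such a dual cycle of size $O(N)$ around the tube can be forced closed in the primal, at cost $e^{-CN}$. This gives $\xi>0$ at every $\beta\in(0,\infty)$, since every cost is a product of $O(N)$ finite-energy factors. So the substance of the theorem is finiteness.

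The upper bound, $\xi<\infty$, is handled separately in the two regimes. \emph{Small $\beta$ ($p$ small).} The event $V_{\gamma,\gamma'}$ requires an open $(i+1)$-chain with boundary $\gamma-\gamma'$. Its support is a connected set of plaquettes of size at least $N$ containing $\sigma_1$'s boundary. Stochastic domination of the PRCM by Bernoulli($p$) plaquette percolation (the cohomology weight is bounded by a factor of $q$ per plaquette) and a Peierls count of connected plaquette sets give $\mu(V_{\gamma,\gamma'})\le (Cp)^N$. For the covariance term, note that $V_\gamma$ is determined by the open cluster (in the plaquette-adjacency sense) touching $\gamma$. If the clusters of $\gamma$ and $\gamma'$ are disjoint, a domain Markov/ratio-weak-mixing argument decouples them. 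I expect to bound the covariance by the probability that the two clusters meet, which is again at most $(Cp)^{N}$.

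\emph{Large $\beta$.} Pass to the dual. By Alexander/Poincaré duality, the $(i+1)$-PRCM on $\Lambda$ with $\#$ boundary conditions is dual to the $(d-i-1)$-PRCM with dual boundary conditions and dual parameter $p^*$ small, where $p p^*/( (1-p)(1-p^*))=q$. Under duality, $V_\gamma$ fails exactly when a dual closed $(d-i-1)$-cycle links $\gamma$. The correlations are then expressed through dual-linking events, and the same Peierls argument as in the small-$\beta$ case, applied to the dual model with $p^*$ small, gives $e^{-cN}$ bounds. Here the general-$i$ models are genuinely needed, since the dual of $i=1$ is dimension $d-2$. The main obstacle is the covariance term at large $\beta$. $\mathrm{Cov}(V_\gamma,V_{\gamma'})=\mathrm{Cov}(V_\gamma^c,V_{\gamma'}^c)$, and each complement is caused by small dual linking cycles near $\gamma$ or $\gamma'$; these are nearly independent. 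I would prove decorrelation by exploring dual clusters from each loop: off the event that some dual cluster has diameter $\ge N/2$, which has probability at most $e^{-cN}$, the two events are measurable with respect to disjoint explored regions. Conditioning on the explored regions then gives exponential ratio mixing. Making that exploration/Markov step rigorous for the topological weight $|H^{\cdot}|$ is where I expect the most care.
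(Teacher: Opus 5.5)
Your outline matches the paper's:
\begin{itemize}
\item Theorem~\ref{thm:correlation};
\item a finite-energy lower bound on $\mu(V_{\gamma,\gamma'})$, together with $\mathrm{Cov}_\mu(V_\gamma,V_{\gamma'})\geq 0$;
\item domination by Bernoulli plaquette percolation, so that strongly connected clusters have exponential tails when $p$ is small;
\item the same argument in the dual when $\beta$ is large.
\end{itemize}
The genuine gap is the exponential upper bound on $\mathrm{Cov}_\mu(V_\gamma,V_{\gamma'})$, in both regimes. You only say you ``expect'' to get it from a domain Markov/ratio-weak-mixing argument. This is most of the paper's proof and cannot be taken for granted. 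The PRCM is not a product measure, so events determined by disjoint clusters are not automatically independent. No ratio-weak-mixing result is available for the PRCM, and you do not supply one.

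The paper uses no mixing input. It localizes $V_{\sigma_1}$ and $V_{\sigma_N}$ in boxes of radius $N/4$ (events $A_1,A_N$). It bounds $\mu(V_{\sigma_1}\cap V_{\sigma_N})$ above by the product of wired measures on disjoint boxes of radius $N/3$. It bounds $\mu(V_{\sigma_1})\mu(V_{\sigma_N})$ below by free box measures, which are compared with the wired ones via (\ref{eq:wiredfree}) on the event $B_1$ that no strongly connected path crosses $\Lambda_{N/3}(\sigma_1)\setminus\Lambda_{N/4}(\sigma_1)$. For large $\beta$ it uses the dual version (\ref{eq:wiredfreedual}).

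Your exploration idea can also be completed, and more easily than you fear, because only a one-sided bound is needed.
\begin{itemize}
\item Distinct strongly connected clusters share no codimension-one faces. Hence the cocycle constraints imposed by the open plaquettes split over clusters, and the cohomological weight is a product over clusters.
\item For the cocycle groups, $Z(P\cup Q)=Z(P)\cap Z(Q)$ and $Z(P\cap Q)\supseteq Z(P)+Z(Q)$. This gives the FKG lattice condition for every $q$, which you assumed.
\item Let $\mathcal{C}(\gamma)$ be the union of open clusters meeting $\gamma$. Take a possible value $D$ of $\mathcal{C}(\gamma)$ not meeting $\gamma'$, and let $\bar D$ be $D$ together with all plaquettes adjacent to $D$ or $\gamma$. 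Then $V_{\gamma'}$ depends only on the configuration off $\bar D$, and
$\mu(V_{\gamma'}\mid\mathcal{C}(\gamma)=D)=\mu(V_{\gamma'}\mid\bar D\text{ closed})\leq\mu(V_{\gamma'})$.
\item Summing over $D\in V_\gamma$ gives $\mathrm{Cov}_\mu(V_\gamma,V_{\gamma'})\leq\mu(\mathcal{C}(\gamma)\text{ meets }\gamma')$, which is exponentially small.
\end{itemize}
The same argument works for the dual wired measure at large $\beta$.

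There is one smaller omission in the large-$\beta$ case: you need to say why $V_{\gamma,\gamma'}$ forces a long dual cluster. Since $\gamma-\gamma'$ bounds an open chain, every dual cycle in $P^\bullet$ has equal linking numbers with $\gamma$ and $\gamma'$. So a dual cycle linking $\partial\sigma_1$ must pass through both $\sigma_1^\bullet$ and $\sigma_N^\bullet$. The paper states this as: adding $\sigma_N$ to $P$ makes $V_{\sigma_1}$ occur, so $\sigma_N^\bullet\in\mathcal{C}_{P^\bullet}(\sigma_1^\bullet)$.
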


\begin{Theorem}\label{thm:codimone}
Let $\beta_{c}(q)$ be the critical inverse temperature for the Potts model on $\Z^d$ and  $\beta^*\paren{\beta}=\log\paren{\frac{e^{\beta}+q-1}{e^{\beta}-1}}\,.$ Then
 $0<\xi_{\beta,2,d,d-2}<\infty$ when $\beta\neq \beta^*\paren{\beta_c\paren{2}},$ and $0<\xi_{\beta,q,d,d-2}<\infty$ when $\beta<\beta^*\paren{\beta_c\paren{q}}.$
\end{Theorem}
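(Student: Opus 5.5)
The plan is to specialize Theorem~\ref{thm:correlation} to codimension one, where $i=d-2$ and the PRCM is $(d-1)$-dimensional. In that setting the PRCM is dual to the classical random cluster model on the dual lattice $(\Z^d)^*$, so each topological event in the formula becomes a connectivity event for dual edges. Exponential decay, and exponential lower bounds, for the random cluster model in the relevant regimes then give $0<\xi<\infty$.

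\textbf{Step 1: translating the events.} Let $\sigma$ be a $(d-1)$-cell with $\gamma=\partial\sigma$, and let $\sigma^*=\{x,y\}$ be its dual edge. Let $P^*$ be the dual configuration: a dual edge is open iff its primal $(d-1)$-cell is not in $P$. The standard duality for the PRCM in codimension one (the one underlying Theorem~\ref{thm:comparisongeneral}) says that $P^*$ is a random cluster model with parameters $(p^*,q)$, where $p^*$ corresponds to inverse temperature $\beta^*(\beta)$. Free boundary conditions for $P$ become wired ones for $P^*$, and conversely. Via Alexander/Lefschetz duality on the box, I would show the following, with the details done in $\Lambda_N$ and then passed to the limit.
\begin{itemize}
\item $V_{\partial\sigma}$ holds iff $\sigma\in P$, or $x$ and $y$ are disconnected in $P^*$.
\item $V_{\partial\sigma,\partial\sigma'}$ holds iff $\sigma,\sigma'\notin P$ and both of the following hold: $x$ is connected to $y$ in $P^*$, and every such connection passes through $\sigma'^*$. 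That is, $\sigma^*$ and $\sigma'^*$ are both pivotal for the same connection.
\end{itemize}
For the second item, working with $\Z_q$ coefficients should cause no difficulty in codimension one, because $H_{d-2}$ of the complement is governed by $\tilde H^0$ of the dual graph, which is free. With $\sigma=\sigma_1$ and $\sigma'=\sigma_N$, both $\mu(V_{\gamma,\gamma'})$ and $\mathrm{Cov}_\mu(V_\gamma,V_{\gamma'})$ are therefore differences of probabilities of local dual connectivity events at distance $N$.

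\textbf{Step 2: the upper bound ($\xi<\infty$).} I would split into two regimes.
\begin{itemize}
\item \emph{Subcritical dual side.} Here every connection between $\{x,y\}$ and $\sigma_N^*$ decays exponentially, by sharpness of the phase transition (Duminil-Copin--Raoufi--Tassion), which holds for all $q\ge 1$. The event $V_{\gamma,\gamma'}$ forces such a connection. The covariance of the two local events is bounded, through a standard coupling/ratio-mixing argument, by the probability that the dual clusters of the two edges meet. This is the regime that is valid for every $q$, namely $\beta<\beta^*(\beta_c(q))$ once the duality direction and the free/wired swap are tracked correctly.
\item \emph{Supercritical dual side, $q=2$.} I would use exponential decay of truncated correlations for the supercritical Ising model away from $\beta_c$ in every dimension (Duminil-Copin--Goswami--Raoufi). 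This gives exponential ratio weak mixing for the dual FK-Ising measure, which controls the covariance term. The term $\mu(V_{\gamma,\gamma'})$ requires a dual-closed separating surface containing both $\sigma_1$ and $\sigma_N$. I would bound it by $e^{-cN}$ through the same mixing estimate, or directly by a surface-tension bound.
\end{itemize}

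\textbf{Step 3: the lower bound ($\xi>0$).} By Theorem~\ref{thm:correlation} and the FKG inequality, both terms are nonnegative, since $V_\gamma$ and $V_{\gamma'}$ are both increasing in $P$. It therefore suffices to show $\mu(V_{\gamma,\gamma'})\ge e^{-CN}$. To do this I would force a tube of $O(N)$ plaquettes whose boundary is $\gamma-\gamma'$, together with a closed dual path linking $\sigma_1$ and $\sigma_N$ that makes $[\gamma]\neq 0$. The probability of this is at least $e^{-CN}$ by finite energy, using only $O(N)$ cells.

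The main obstacle I expect is Step 2 on the supercritical dual side, in particular bounding $\mu(V_{\gamma,\gamma'})$ there. This is also why the statement for general $q$ covers only one side of the transition.
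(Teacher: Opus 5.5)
Your architecture matches the paper's:
\begin{itemize}
\item the decomposition of Theorem~\ref{thm:correlation};
\item codimension-one duality with the random cluster model;
\item sharpness on the subcritical side and Duminil-Copin--Goswami--Raoufi (DGR) for the supercritical covariance;
\item a finite-energy tube for $\xi>0$.
\end{itemize}
Your pivotal-pair description of $V_{\gamma,\gamma'}$ is the codimension-one case of Lemma~\ref{lemma:vsigmataudual}. In both bullets, though, connectivity must be taken in $P^*\setminus\set{\sigma^*}$, since $\sigma^*$ is itself open whenever $\sigma\notin P$.

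The genuine gap is the step you flagged: bounding $\mu(V_{\gamma,\gamma'})$ on the supercritical dual side for $q=2$. This is the probability of a single event, not a covariance, so mixing of local events does not by itself control it. Long dual connections are typical in that phase, so you must isolate what is atypical here. The missing idea, which is the paper's argument, runs as follows.
\begin{itemize}
\item In $\omega'=P^*\setminus\set{\sigma_1^*,\sigma_N^*}$, the event places $x_1,x_N$ in one cluster and $y_1,y_N$ in a different one.
\item Closing two edges cannot create two infinite clusters, by uniqueness of the infinite cluster plus finite energy. So one of these two clusters is finite yet joins points at distance about $N$. Your separating surface is just its edge boundary.
\item Undoing the closure of the two edges costs only a constant factor by finite energy, giving
\[
\mu\paren{V_{\gamma,\gamma'}}\le C\brac{\phi^1_{p^*}\paren{x_1\leftrightarrow x_N,\ x_1\not\leftrightarrow\infty}+\phi^1_{p^*}\paren{y_1\leftrightarrow y_N,\ y_1\not\leftrightarrow\infty}}.
\]
\item The right side decays exponentially for supercritical FK-Ising. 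The paper gets this from Bodineau's theorem that the slab threshold equals $p_c$ for $q=2$, fed into Pisztora-type coarse graining. Alternatively, by FKG each term is dominated by the truncated Ising two-point function, so DGR also suffices.
\end{itemize}
This is exactly where $q=2$ is used. Neither of your suggested routes supplies it: mixing needs this reduction first, and you do not specify what surface-tension bound you would use. The same estimate is needed for your covariance step. In the supercritical phase, $V_{\sigma_1}$ and $V_{\sigma_N}$ depend on connections through the infinite cluster. They must first be replaced, at exponentially small cost, by local events (e.g.\ $x\leftrightarrow y$ inside $\Lambda_{N/3}$, or both reaching $\partial\Lambda_{N/3}$) before any mixing statement applies.

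Separately, your regime labels are reversed. The dual model is the random cluster model at inverse temperature $\beta^*(\beta)$, and $\beta^*$ is a decreasing involution. For example, $\beta\to0$ gives $p\to0$ and $p^*\to1$, a supercritical dual. So the dual is subcritical exactly when $\beta>\beta^*(\beta_c(q))$, not when $\beta<\beta^*(\beta_c(q))$. The dual-subcritical argument, the only one available for general $q$, therefore proves the general-$q$ claim for $\beta>\beta^*(\beta_c(q))$. The paper's proof makes the same identification as you, so this is a defect in the statement as written. The $q=2$ claim is unaffected, since both sides are covered.
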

Furthermore, in proving Theorem~\ref{thm:codimone} we prove a stronger result which allows the cycles to grow at a rate of $o\paren{N^{\frac{1}{d-1}}}.$  The boundary conditions are irrelevant in this case because there is a unique Gibbs measure for each value of $\beta$ for the $q=2$ random cluster model on $\Z^d$~\cite{bodineau2006translation,raoufi2020translation,aizenman2015random}, and uniqueness is known in the subcritical regime for general $q$~\cite{grimmett1995stochastic, aizenman1988discontinuity}. 

Finally, we show that Wilson loop correlations for the boundaries of single plaquettes are ``almost'' self-dual for PLGT on $\Z^4$ and more generally for hyperlattice gauge theory in dimension $d = 2i + 2.$ This implies the following theorem.

\begin{Theorem}\label{thm:massgapdual}
If $q$ is prime and $d=2i+2$ is even then $$\xi^{\mathbf{f}}_{\beta,q,d,i}=\xi^{\mathbf{w}}_{\beta^*\paren{\beta,q},q,d,i}$$ where 
$\beta^*\paren{\beta}=\log\paren{\frac{e^{\beta}+q-1}{e^{\beta}-1}}\,.$
\end{Theorem}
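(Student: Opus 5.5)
The plan is to combine Theorem~\ref{thm:correlation} with a duality statement for the PRCM in dimension $d=2i+2$. The relevant duality, established in the authors' earlier work, is as follows. For $q$ prime, the $(i+1)$-dimensional PRCM with free boundary conditions on $\Lambda$ with parameter $p$ is dual to the $(d-i-1)=(i+1)$-dimensional PRCM with wired boundary conditions on the dual box, with parameter $p^*$ satisfying $\frac{p^*}{1-p^*}=\frac{q(1-p)}{p}$. The dual complex $P^\bullet$ consists of the dual cells of the closed plaquettes. Under $p=1-e^{-\beta}$, the parameter $p^*$ corresponds exactly to $\beta^*(\beta)$. We also need primality of $q$: $\Z_q$ is then a field, so Alexander/Poincaré duality gives clean statements about homology classes.

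Fix plaquettes $\sigma_1,\sigma_N$ with $\gamma=\partial\sigma_1$ and $\gamma'=\partial\sigma_N$. By Theorem~\ref{thm:correlation}, the covariance equals $\mu(V_{\gamma,\gamma'})+\mathrm{Cov}_\mu(V_\gamma,V_{\gamma'})$.

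The key step is to translate each event into the dual model. For a single plaquette boundary, $V_{\partial\sigma}$ fails exactly when $\sigma\notin P$ and $\partial\sigma$ is also not filled by some other surface. By Alexander duality this means the dual $(i+1)$-cell $\sigma^\bullet$ (itself an $i+1$-cell since $d=2i+2$) is open in $P^\bullet$ and its boundary $\partial\sigma^\bullet$ is nontrivial in the wired dual. I therefore expect $1_{V_{\partial\sigma}}=1-1_{\{\sigma^\bullet\in P^\bullet\}}+1_{\{\sigma^\bullet\in P^\bullet\}}1_{V^{\mathbf w}_{\partial\sigma^\bullet}}$, or an equivalent identity. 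Similarly, $V_{\gamma,\gamma'}$, where the two loops are homologous but nonzero, should correspond to the event that $\sigma_1^\bullet$ and $\sigma_N^\bullet$ are both open and linked in the dual in the sense of $V^{\mathbf w}_{\partial\sigma_1^\bullet,\partial\sigma_N^\bullet}$. The picture is an open dual ``tube'' separating the two surfaces, as in Figure~\ref{fig:vdoublegamma}. Expanding the covariance then expresses $\mathrm{Cov}_{\nu^{\mathbf f}_\beta}(W_{\sigma_1},W_{\sigma_N}^{-1})$ as a sum of a bounded number of terms. Each term is either the dual covariance $\mathrm{Cov}_{\nu^{\mathbf w}_{\beta^*}}(W_{\sigma_1^\bullet},W_{\sigma_N^\bullet}^{-1})$ (again via Theorem~\ref{thm:correlation}) or a covariance of local indicator events such as $\{\sigma^\bullet\in P^\bullet\}$ with topological events.

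To conclude, I will show that the extra terms are comparable to the main terms, up to constant factors independent of $N$. The extra terms differ from the main ones by conditioning on, or forcing, single plaquettes to be open or closed. Finite energy of the PRCM bounds these changes by multiplicative constants depending only on $p$ and $q$. Constant factors disappear in $\lim\frac1N\log$, and the limit exists by the reflection-positivity remark. So $\xi^{\mathbf f}_{\beta}=\xi^{\mathbf w}_{\beta^*}$, applied in both directions to obtain $\le$ and $\ge$. This is presumably the sense of ``almost self-dual'' in the statement.

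I expect the main obstacle to be the comparison step rather than the duality identities. The covariance terms may have different signs, so cancellations could make the extra terms not obviously bounded by the main terms. I would first try to show that all terms are nonnegative by FKG, since the $V$ events are increasing. Then the covariance is sandwiched between constant multiples of $\mu(V_{\gamma,\gamma'})+\mathrm{Cov}(V_\gamma,V_{\gamma'})$ computed in the dual. Finite energy should handle the local modifications at $\sigma_1$ and $\sigma_N$, with care taken in the infinite-volume limit of wired versus free measures.
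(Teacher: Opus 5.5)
\textbf{There is a genuine gap: the covariance comparison, which you correctly flag as the crux, cannot be done with finite energy and FKG.}

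The skeleton is sound: Theorem~\ref{thm:correlation} plus duality, with $d=2i+2$ making the dual model again $(i+1)$-dimensional. The $\mu(V_{\gamma,\gamma'})$ term is also fine by finite energy. By Lemma~\ref{lemma:vsigmataudual}, $V_{\sigma_1,\sigma_N}$ and $V^{\mathbf w}_{\sigma_1^\bullet,\sigma_N^\bullet}$ are the events that both dual cells are open, resp.\ both closed, intersected with an event $U$ that does not depend on those two cells. So both probabilities are $\asymp\mu(U)$.

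The problem is the covariance term. First, your dictionary is reversed. By (the proof of) Lemma~\ref{lemma:vsigmadual}, $V_\sigma$ fails iff $\sigma^\bullet\in P^\bullet$ and $\partial\sigma^\bullet$ is \emph{null}-homologous in $P^\bullet\setminus\sigma^\bullet$ (wired). Set $a_j=I_{\{\sigma_j^\bullet\in P^\bullet\}}$ and $b_j=I_{\{P^\bullet\setminus\sigma_j^\bullet\in V^{\mathbf w}_{\sigma_j^\bullet}\}}$. Then $I_{V_{\sigma_j}}=1-a_jb_j$ and $I_{V^{\mathbf w}_{\sigma_j^\bullet}}=a_j+b_j-a_jb_j$. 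Your identity is identically $1$ if $V^{\mathbf w}$ is read on $P^\bullet$.

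So what must be compared is $\mathrm{Cov}(a_1b_1,a_Nb_N)$ with $\mathrm{Cov}(a_1+b_1-a_1b_1,\,a_N+b_N-a_Nb_N)$.
\begin{itemize}
\item Expanding the latter gives FKG-nonnegative covariances with coefficients of both signs, so FKG yields no sandwich.
\item Finite energy only gives multiplicative bounds on probabilities, such as $\mu^\bullet(a_jb_j=1)\asymp\mu^\bullet(b_j=1)$. Each covariance is an exponentially small difference of order-one probabilities, and constant-factor errors wipe it out. So ``forcing a plaquette costs a bounded factor'' says nothing about these terms.
\end{itemize}

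\textbf{The missing idea is an \emph{exact} conditional law for the two cells; this is where primality of $q$ enters.} By Lemma~\ref{lemma:addone}, given everything except $\sigma_j^\bullet$, that cell is open with probability exactly $p^*$ on $\{b_j=1\}$. On $\{b_j=0\}$ it is open with probability exactly $r=p^*/(p^*+q(1-p^*))$. Off $U$, the $b_j$ are determined by the configuration outside both cells, and the two cells are conditionally independent. Put $K=\mathrm{Cov}(b_1I_{U^c},b_NI_{U^c})$. Conditioning on the outside configuration gives:
\begin{itemize}
\item $\mathrm{Cov}(a_1b_1,a_Nb_N)=(p^*)^2K+O(\mu(U))$;
\item $\mathrm{Cov}(a_1+b_1-a_1b_1,a_N+b_N-a_Nb_N)=(1-r)^2K+O(\mu(U))$;
\item $\mathrm{Cov}(a_1,a_N)=(p^*-r)^2K+O(\mu(U))$.
\end{itemize}
Since $\mu(U)$ is bounded by a constant times either Wilson covariance, this repairs your argument.

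The paper avoids this bookkeeping by using the coupling to prove Proposition~\ref{prop:correlation_alternate} in place of Theorem~\ref{thm:correlation}. That proposition writes the Wilson covariance as a nonnegative combination of $\mathrm{Cov}_\mu(\sigma\in P,\tau\in P)$ and $\mu(V_{\sigma,\tau})$. The first term is exactly self-dual because $I_{\{\sigma\in P\}}=1-I_{\{\sigma^\bullet\in P^\bullet\}}$. The second is self-dual up to a constant by Corollary~\ref{cor:vdoubledual}. Hence $\xi=\max(\eta,\zeta)$, with each rate preserved under $p\mapsto p^*$.
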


\begin{figure}[b]
\centering
\includegraphics[height=0.30\textwidth]{%
			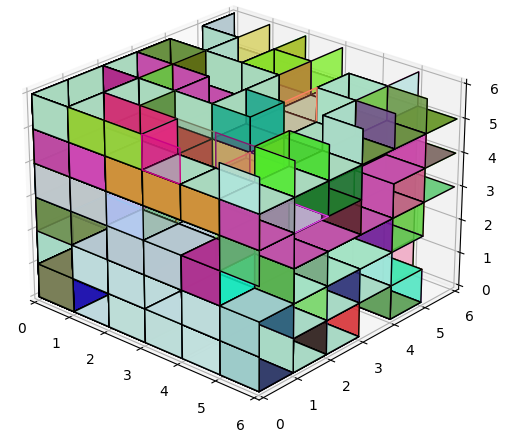}
\caption{An example of a two-dimensional percolation subcomplex of a box in $\Z^3.$}
\label{fig:plaquette}           
\end{figure}

We now give a brief outline of the paper. In Section~\ref{sec:overview} we review the definitions of homology and cohomology, PLGT, and the PRCM. Next, Section~\ref{sec:preliminaries} establishes a number of basic topological results. We prove Theorem~\ref{thm:correlation} in Section~\ref{sec:formulas}, as well as exhibiting a simpler analogue of that theorem when $\gamma$ and $\gamma'$ are the boundaries of single plaquettes and $q$ is prime or $i=0$ or $i=d-2.$ The resulting formula is ``approximately'' self-dual. We then apply the results of Section~\ref{sec:formulas} in Section~\ref{sec:applications} to prove Theorems~\ref{thm:highlowtemp}, ~\ref{thm:codimone}, and~\ref{thm:massgapdual}. 

\section{Background and Previous Work}
\label{sec:overview}
We begin by introducing some notation and conventions. A cubical complex is collection of (hyper)cubes of various dimensions which is closed under inclusion. For example, a two-dimensional cubical complex is a collection of vertices, edges, and faces so that the boundary of an edge is two vertices and the boundary of a face is a finite number of edges. We will mostly be interested in the cubical complex $\Z^d$ (or a finite subcomplex thereof) formed by subdividing $\R^d$ into unit hypercubes with vertices at the integer lattice points. Recall that an $i$-dimensional percolation subcomplex of $X$ is a subcomplex that contains all cells of dimensions $0\leq i\leq i-1$ and some of the cells of dimension $i.$  A random $2$-dimensional percolation subcomplex of $\Z^3$ is shown in Figure~\ref{fig:plaquette}. The vertices and edges of $\Z^d$ constitute the familiar nearest-neighbor graph on $\Z^d.$ The faces are unit squares (called plaquettes) which are obtained by from $\brac{0,1}^2\times\set{0}^{d-2}$ by translations and rotations. We will assume that the edges and faces of the cubical complex are oriented: for example, the edges $\paren{v,w}$ and $\paren{w,v}$ correspond to the same line segment with opposite orientations. The vertices, edges, and faces are called the $0$-cells, $1$-cells, and $2$-cells. We denote the collection of $i$-cells of a subcomplex $X$ by $X^{\paren{i}}.$ In this article the ambient complex will either be $\Z^d$ or a finite box of the form $\Lambda = \prod_{i=1}^d \brac{a_i,b_i},$ where $a_i,b_i \in \Z$ for each $i.$ We also introduce notation for the special case of cubes, writing $\Lambda_N \coloneqq \brac{-N,N}^d.$ 

The dual cubical complex $\paren{\Z^d}^{\bullet} \coloneqq \Z^d + \paren{1/2,\ldots,1/2}$ will also be relevant for our purposes. Each $i$-dimensional cell $\sigma$ of $\Z^d$ intersects exactly one $(d-i)$-dimensional cell of $\paren{\Z^d}^{\bullet},$ which we denote as $\sigma^{\bullet}.$ Then if $P$ is an $i$-dimensional percolation subcomplex of $\Z^d,$ we define the dual complex $P^{\bullet}$ to be the $(d-i)$-dimensional percolation subcomplex of $\paren{\Z^d}^{\bullet}$ satisfying 
\[\sigma^{\bullet} \in P^{\bullet} \iff \sigma \notin P\]
for each $\sigma \in \paren{\Z^d}^{\paren{i}}.$ We also define the dual of a box $\Lambda = \prod_{i=1}^d \brac{a_i,b_i}$ to be $\Lambda^{\bullet} = \prod_{i=1}^d \brac{a_i-1/2,b_i+1/2},$ the smallest dual box containing $\Lambda.$

\begin{figure}[ht]
\center
\includegraphics[width=0.26\textwidth]{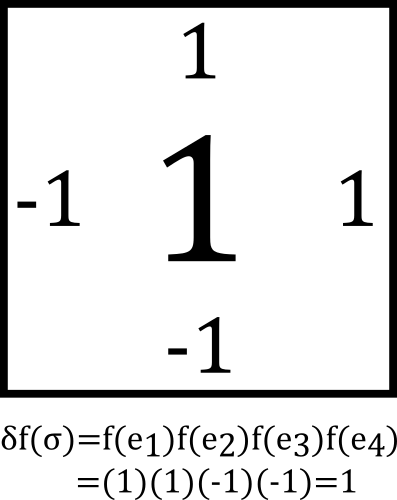}
\caption{An illustration of the coboundary operator $\delta:C^1\paren{X;\Z\paren{2}}\to C^2\paren{X;\Z\paren{2}}.$}
\label{fig:coboundary}
\end{figure}

Functions assigning group elements to the faces of a cell complex are basic objects of interest in both statistical mechanics and algebraic topology but different notational conventions are used. In statistical mechanics, spins are usually taken to be elements of a multiplicative group $G$, such $\Z\paren{q}$ or a complex matrix group. This naturally allows the definition of quantities like Wilson loop expectations.  In algebraic topology, these groups are usually taken in an additive group such as $\paren{\Z_q,+}$ or $\paren{\Z,+}.$ This allows for the convenient use of the ring structure of $\Z$ and $\Z_q.$ For either type of group $G$ we denote the group of functions assigning spins to the  $i$-cells of $X$ by $C^i\paren{X;G}.$ Elements of $C^i\paren{X;G}$ are referred to either as $i$-cochains or discrete $i$-forms. We write $-\sigma$ to denote the $i$-cell $\sigma$ with reversed orientation and require that any $i$-cochain $f$ satisfies $f\paren{-\sigma}=f\paren{\sigma}^{-1}.$ Note that $C^i\paren{X;\Z_q}$ are $C^i\paren{X;\Z\paren{q}}$ are isomorphic as groups, and cohomology groups defined using either notational convention will be isomorphic. 

Let $G$ be a multiplicative group. The \textbf{coboundary} or discrete exterior derivative is the group homomorphism $\delta^i:C^i\paren{X;G}\to C^{i+1}\paren{X;G}$ defined by setting $\delta^i f\paren{\sigma}$ to be the product of $f\paren{\tau}$ over the oriented $i$-cells $\tau$ in the boundary of an $(i+1)$-cell $\sigma,$ and extending multiplicatively (when the group operation is written as addition, we instead extend linearly). For example, if $e=\paren{v,w}$ is an oriented edge and $f\in C^0\paren{X;G}$ then $\delta f\paren{e}=f\paren{w}f\paren{v}^{-1}.$ See Figure~\ref{fig:coboundary}. We will sometimes drop the superscript when the dimension is understood. This is well-defined when $G$ is an abelian group, but only for $i=0$ when $G$ is non-abelian. However, if $G$ is a complex matrix group, $f\in C^1\paren{X;G},$ then $\Tr\delta f$ is well-defined, since the trace of a product of matrices is cyclically invariant. When $G$ is an additive group, the coboundary operator is defined analogously.

\looseness=-1 

The \textbf{cohomology groups} are defined by 
\[H^i\paren{X;G} \coloneqq \ker \delta^i / \im \delta^{i-1}\,.\]

We primarily use these groups in the definition of the PRCM, although they also appear in calculations in Section~\ref{sec:preliminaries}. The key events in the PRCM are defined using the dual notion of homology. We will only define these using additive notation, so we now let $G$ be an additive abelian group. The chain group $C_i\paren{X;G}$ consists of formal linear sums of oriented $i$-cells in $X$ with coefficients in $G.$ The cochains defined earlier can be thought of as linear functions on chains, though here we view them as additive instead of multiplicative in order to match the standard convention in algebraic topology. The \textbf{boundary} map $\partial_i : C_i\paren{X;G} \to C_{i-1}\paren{X;G}$ is a linear map that sends an $i$-dimensional cube to a sum of its $(i-1)$-dimensional faces, oriented so that $\partial_i \circ \partial_{i+1} = 0.$ The \textbf{homology groups} are then defined by

\[H_i\paren{X;G} \coloneqq \ker \partial_i / \im \partial_{i+1}\,.\]

Recall that for $\gamma \in C_i\paren{X;G}$ with $\partial \gamma = 0,$ $V_{\gamma}$ is the event that $0 = \brac{\gamma} \in H_i\paren{P;G},$ or in words that there is a sum of plaquettes whose boundary is $\gamma.$ Recall also that for $\gamma,\gamma' \in C_i\paren{X;G}$ with $\partial \gamma = \partial \gamma' = 0,$ $V_{\gamma,\gamma'}$ is the event that $0 \neq \brac{\gamma} = \brac{\gamma'} \in H_i\paren{P;G},$ or in words that there is a sum of plaquettes whose boundary is $\gamma - \gamma',$ but that neither $\gamma$ nor $\gamma'$ is the boundary of a sum of plaquettes individually. Since our loops are often boundaries of single plaquettes, in order to simplify notation we will write $V_{\sigma} \coloneqq V_{\partial \sigma}$ and $V_{\sigma,\sigma'} \coloneqq V_{\partial \sigma,\partial \sigma'}.$

\subsection{Potts Lattice Gauge Theory}

\begin{Definition}
Let $\Lambda \subset \Z^d$ be a box, let $q\geq 2,$ and let $0\leq i\leq d-1.$ For $f\in C^{i}\paren{\Lambda;\Z\paren{q}}$ define
\begin{equation*}
    H\paren{f}=-\sum_{\sigma\in \Lambda^{\paren{i}}}I_{\delta f\paren{\sigma}=1}\,.
\end{equation*}
The $i$-dimensional $q$-state Potts (hyper)lattice gauge theory on $\Lambda$ with inverse temperature $\beta$ is the measure
\[\nu_\Lambda\paren{f}=\nu^{\mathbf{f}}_{\Lambda,\beta,q,i}\paren{f} \propto e^{-\beta H\paren{f}}\,.\]
Furthermore, let $f' \in C^{i}\paren{\Lambda;\Z\paren{q}}$ satisfy $f'\paren{\sigma} = 1$ for $\sigma\in \paren{\partial \Lambda}^{\paren{i}}$ and $f'\paren{\sigma} = f\paren{\sigma}$ for $\sigma \in \paren{\Lambda \setminus \partial \Lambda}^{\paren{1}}.$ Then the $q$-state Potts lattice gauge theory with constant boundary conditions and inverse temperature $\beta$ is the measure $\nu^1_\Lambda=\nu^1_{\Lambda,\beta,q,i}$ defined by
\[\nu^1_\Lambda\paren{f}\propto \nu_{\Lambda}\paren{f'} \propto e^{-\beta H\paren{f'}}\,.\]
\end{Definition}
In addition to $\Z\paren{q}$ symmetry, PLGT with free boundary conditions has what is called a gauge symmetry: for a vertex spin assignment $g\in C^0\paren{\Lambda;\Z\paren{q}}$ the transformation $f\mapsto f\,\delta g$ leaves the Hamiltonian unchanged. Note that we could replace constant (1) boundary conditions with any coboundary on $\partial \Lambda$ --- or by imposing wired boundary conditions that just require that $\delta f\mid_{\partial \Lambda}=1$ ---without changing the behavior of gauge-invariant observables. Denote PLGT on $\Lambda$ with wired boundary conditions by $\nu_\Lambda^{\mathbf{w}}.$ 

The infinite volume measures $\nu_{\Z^d},\nu_{\Z^d}^{\mathbf{w}},$ and $\nu^1_{\Z^d}$ are defined via weak limits over increasing cubes. The existence of these limits can be shown using the PRCM, as is done in~\cite{duncan2025sharp}. 

\begin{Definition}
The \textbf{Wilson loop variable} for an oriented $i$-cycle $\gamma$ and $f\in C^{i}\paren{X;\Z\paren{q}}$ is
\[W_{\gamma}\paren{f}=f\paren{\gamma}\,\,.\]
 That is, $W_{\gamma}\paren{f}$ is the oriented product of the spins assigned to the $i$-cells of $\gamma.$ 
\end{Definition}

\subsection{The Plaquette Random Cluster Model}
The $i$-dimensional plaquette random cluster model is a cellular representation of $(i-1)$-dimensional Potts lattice gauge theory. 
\begin{Definition}
The $i$-dimensional \textbf{plaquette random cluster model (PRCM)} with free boundary conditions on a finite cell complex $X$ with parameters $p\in\brac{0,1}$ and $q\in\N_{\geq 2}$ is the random $i$-complex $P$ with the following distribution:
$$\mu^{\mathbf{f}}_{X}\paren{P} = \mu^{\mathbf{f}}_{X,p,q}\paren{P} \propto p^{\abs{P}}\paren{1-p}^{\abs{X^{i}} - \abs{P}}\abs{H^{i-1}\paren{P;\Z_q}}$$
where $\abs{X^{\paren{i}}}$ and $\abs{P}$ denote the number of $i$-faces $X$ and $P.$

Moreover, if $\Lambda=\brac{-N,N}^d\subset \Z^d$ is a box, the PRCM with wired boundary conditions on $\Lambda$ is the measure
$$\mu^{\mathbf{w}}_{\Lambda}\paren{P} = \mu^{\mathbf{w}}_{\Lambda,p,q}\paren{P} \propto p^{\abs{P}}\paren{1-p}^{\abs{\Lambda^{\paren{i}}} - \abs{P}}\abs{H^{i-1}\paren{P\cup \partial \Lambda;\Z_q}}\,.$$
The weak limits of these measures as $\Lambda\nearrow \Z^d$ exist by standard stochastic monotonicity arguments and are denoted by $\mu^{\mathbf{f}}_{\Z^d,p,q}$ and $\mu^{\mathbf{w}}_{\Z^d,p,q}.$ 
\end{Definition}

The following statements are reproduced from~\cite{duncan2025sharp}, with minor modifications.

\begin{Proposition}\label{prop:couple}
Let $\Lambda\subset \Z^d$ be a box and let $q\in\N+1,\,$$\beta \in [0,\infty),$ and $p = 1-e^{-\beta}.$ Consider the couplings on $C^{i}\paren{\Lambda;\Z\paren{q}}\times \set{0,1}^{X^{\paren{i+1}}}$ by
\[K^{\mathbf{f}}\paren{f,P} \propto \prod_{\sigma \in X^{\paren{i+1}}}\brac{\paren{1-p}I_{\set{\sigma \notin P}} + p I_{\set{\sigma \in P,\delta f\paren{\sigma}=1}}}\]
and
\[K^{\mathbf{w}}\paren{f,P} \propto I_{\delta f\mid_{\partial \Lambda} =1}\prod_{\sigma \in X^{\paren{i+1}}}\brac{\paren{1-p}I_{\set{\sigma \notin P}} + p I_{\set{\sigma \in P,\delta f\paren{\sigma}=1}}}\,.\]

Then for $\#\in\set{\mathbf{f},\mathbf{w}},$ 
\begin{itemize}
    \item The first marginal of $K^{\#}$ is
    $\nu^{\#}_{\Lambda,\beta,q,i}.$ 
    \item The second marginal of $K^{\#}$ is
    $\mu^{\#}_{\Lambda,p,\Z_q,i+1}.$ 
\end{itemize}
\end{Proposition}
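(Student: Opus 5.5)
The plan is to compute both marginals by summing the joint weight $K^{\#}(f,P)$ over the other variable. Both sums factor over the $(i+1)$-cells $\sigma\in X^{(i+1)}$, so each reduces to a one-cell computation. The free and wired cases are handled identically. In the wired case the indicator $I_{\delta f\mid_{\partial\Lambda}=1}$ is carried along throughout, and the relevant complex becomes $P\cup\partial\Lambda$.

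\textbf{First marginal.} Fix $f$ and sum over $P$. Since $P$ is determined by an independent choice of $\sigma\in P$ or $\sigma\notin P$ for each $(i+1)$-cell, the sum factors:
\[\sum_P K^{\#}(f,P)\propto \prod_{\sigma}\big[(1-p)+pI_{\delta f(\sigma)=1}\big].\]
If $\delta f(\sigma)=1$ the factor is $1$; otherwise it is $1-p=e^{-\beta}$. The product is therefore
\[e^{-\beta\abs{\{\sigma:\delta f(\sigma)\neq 1\}}}\propto e^{\beta\sum_\sigma I_{\delta f(\sigma)=1}}=e^{-\beta H(f)}.\]
This is $\nu^{\mathbf f}$ in the free case. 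In the wired case we also have the indicator that $\delta f$ is trivial on $\partial\Lambda$, which is the wired (equivalently, up to gauge, constant) measure $\nu^{\mathbf w}$. The plaquettes in $\partial\Lambda$ then contribute only a constant factor. I will remark that the wired measure is defined exactly by this constraint, so no further work is needed there.

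\textbf{Second marginal.} Fix $P$ and sum over $f$. The weight equals $p^{\abs{P}}(1-p)^{\abs{X^{(i+1)}}-\abs{P}}$ times the number of $f\in C^i(\Lambda;\Z(q))$ with $\delta f(\sigma)=1$ for all $\sigma\in P$ (and on $\partial\Lambda$ in the wired case). This set is exactly the group $Z^i(P;\Z_q)$ of $i$-cocycles of $P$, since $P$ contains the full $i$-skeleton. In the wired case it is $Z^i(P\cup\partial\Lambda;\Z_q)$. Its cardinality is
\[\abs{Z^i}=\abs{H^i}\cdot\abs{B^i}=\abs{H^i}\cdot\abs{C^{i-1}}/\abs{Z^{i-1}}.\]
Because $P$ and $X$ share the same $i$-skeleton, the groups $C^{i-1}$ and $Z^{i-1}$ do not depend on $P$. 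So $\abs{B^i}$ is the same constant for every $P$, and the count is proportional to $\abs{H^i(P;\Z_q)}$. This is precisely the weight of the $(i+1)$-dimensional PRCM. In the wired case we use $P\cup\partial\Lambda$; its $i$-skeleton is again that of $\Lambda$, so the same argument applies.

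\textbf{Main obstacle.} The only delicate point is this last step: checking that the coboundary count is independent of $P$, in particular in the wired case. I will handle it by noting that $\delta^{i-1}$ lands in $C^i$ and only involves cells of dimension at most $i$, all of which are present in every configuration. The $(i+1)$-cells of $\partial\Lambda$ impose cocycle conditions, but they do not change $B^i$.
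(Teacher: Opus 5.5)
Your proof is correct and is the standard Edwards–Sokal-type computation behind this statement, which the paper takes from its earlier work rather than reproving. Summing out $P$ plaquette by plaquette gives $e^{-\beta H(f)}$ times the boundary indicator, and summing out $f$ gives $p^{|P|}(1-p)^{|X^{(i+1)}|-|P|}\,|Z^i|$ with $|Z^i| = |H^i|\,|B^i|$, where $|B^i|$ is independent of $P$ because every configuration contains the full $i$-skeleton; this holds for $P\cup\partial\Lambda$ in the wired case as well.
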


\begin{Theorem}\label{thm:duality}
Let $\Lambda$ be a box in $\Z^d$ 
Set 
$$p^*\paren{p}=\frac{\paren{1-p}q}{\paren{1-p}q + p}\,.$$
Then for any $i$-dimensional percolation subcomplex $P$ of $\Lambda,$ we have
\begin{equation*}
   \mu_{r,p,q,i}^{\mathbf{f}}\paren{P} = \mu_{\Lambda^{\bullet},p^*\paren{p},q,d-i}^{\mathbf{w}}\paren{P^{\bullet}}\,.
\end{equation*}
\end{Theorem}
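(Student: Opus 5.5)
The plan is to compare the two unnormalized weights directly. Both are products of an exponential factor in $\abs{P}$ and a cohomology count. Since $\abs{P^{\bullet}}=\abs{\Lambda^{(i)}}-\abs{P}$, it is enough to show that there is a constant $C=C\paren{\Lambda,q,d,i}$, independent of $P$, with
\begin{equation*}
\abs{H^{i-1}\paren{P;\Z_q}} = C\, q^{\abs{P^{\bullet}}}\,\abs{H^{d-i-1}\paren{P^{\bullet}\cup\partial\Lambda^{\bullet};\Z_q}}\,.
\end{equation*}
The weight of $P$ in $\mu^{\mathbf{f}}$ is $p^{\abs{P}}(1-p)^{\abs{P^\bullet}}\abs{H^{i-1}(P)}$. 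Substituting this identity turns it into a constant times $p^{\abs{P}}\paren{(1-p)q}^{\abs{P^\bullet}}\abs{H^{d-i-1}(P^\bullet\cup\partial\Lambda^\bullet)}$. The ratio $(1-p)q/p$ equals $p^*/(1-p^*)$ for the stated $p^*$, so this is proportional to the wired weight of $P^\bullet$ at parameter $p^*$. Normalizing then gives equality of the two measures.

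\textbf{Step 1 (the free side as a count of cocycles).} I would write $\abs{H^{i-1}(P;\Z_q)}=\abs{Z^{i-1}(P)}/\abs{B^{i-1}(P)}$. Here $B^{i-1}(P)=\delta C^{i-2}(\Lambda)$ does not depend on $P$, since $P$ contains the full $(i-1)$-skeleton. The group $Z^{i-1}(P)$ consists of the $f\in C^{i-1}(\Lambda)$ with $\delta f|_P=0$. The map $\delta$ sends it onto the set of coboundaries in $C^i(\Lambda)$ that vanish on $P$, with kernel $Z^{i-1}(\Lambda)$, which also does not depend on $P$. Because the box is contractible and $i\ge 1$, coboundaries coincide with cocycles in degree $i$. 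So, up to a $P$-independent constant, $\abs{H^{i-1}(P)}$ equals the number of $i$-cocycles of $\Lambda$ supported on the $i$-cells not in $P$.

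\textbf{Step 2 (Poincaré duality at the chain level).} The correspondence $\sigma\mapsto\sigma^\bullet$ identifies $C^i(\Lambda;\Z_q)$ with $C_{d-i}(\Lambda^\bullet;\Z_q)$ and intertwines $\delta$ with $\partial$, up to sign. Under it, an $i$-cocycle supported off $P$ becomes a $(d-i)$-chain supported on $P^\bullet$ whose boundary vanishes away from $\partial\Lambda^\bullet$. That is, it is a relative cycle in $Z_{d-i}(P^\bullet\cup\partial\Lambda^\bullet,\partial\Lambda^\bullet)$. The complex $P^\bullet$ has no interior $(d-i+1)$-cells, so there are no relative boundaries in that degree. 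Hence this count is exactly $\abs{H_{d-i}(P^\bullet\cup\partial\Lambda^\bullet,\partial\Lambda^\bullet;\Z_q)}$. I expect the main care here to be the bookkeeping of the cells lying in $\partial\Lambda^\bullet$.

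\textbf{Step 3 (from relative homology to the wired cohomology).} This step is where the power $q^{\abs{P^\bullet}}$ must appear, and I expect it to be the main obstacle, since $q$ need not be prime and no dimension counting is available. I would first use the long exact sequence of the pair $(P^\bullet\cup\partial\Lambda^\bullet,\partial\Lambda^\bullet)$, in which the terms coming from $\partial\Lambda^\bullet$ are $P$-independent. I would then use the universal coefficient theorem over $\Z_q$, which gives $\abs{H^k(Y;\Z_q)}=\abs{H_k(Y;\Z_q)}$ for finite $Y$, to pass from homology to cohomology. Finally, to trade degree $d-i$ for degree $d-i-1$, I would use the alternating-product identity for orders of finite groups along the exact chain complex. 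This is the $\Z_q$-analogue of the Euler characteristic computation $\prod_k \abs{H_k}^{(-1)^k}=\prod_k \abs{C_k}^{(-1)^k}$, which holds because orders of finite abelian groups are multiplicative in short exact sequences. It shows that, up to a $P$-independent constant, $\abs{H_{d-i}}/\abs{H_{d-i-1}}$ equals $q^{\abs{P^\bullet}}$ times terms that depend only on the full lower skeleta. Combining this with Steps 1 and 2 gives the identity displayed above with an explicit constant $C$, and hence Theorem~\ref{thm:duality}.
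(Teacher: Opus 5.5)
Your argument is correct. There is no in-paper proof to compare it with: the paper states Theorem~\ref{thm:duality} without proof, importing it from \cite{duncan2025sharp}. (The subscript $r$ in the statement should read $\Lambda$.)

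The closest machinery here is in Section~\ref{sec:preliminaries}. Its Alexander duality, Theorem~\ref{thm:alexander}, holds for every $q$, but the passage from there to statements about weights goes through Betti numbers: $\beta_k(P)=\beta_{d-k-1}(P^{\ast})$ combined with the Euler--Poincar\'e count behind Lemma~\ref{lemma:addone}. That route is available only for prime $q$ or in the special degrees.

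Your route works with orders of finite abelian groups instead of ranks. Steps~1--2 are in effect an explicit, relative form of top-degree Alexander duality, carried out at the level of cocycles. Step~3 replaces Euler--Poincar\'e by two facts: orders are multiplicative in short exact sequences, and $\abs{H^k(Y;\Z_q)}=\abs{H_k(Y;\Z_q)}$ for finite $Y$. This gives a self-contained proof for every $q\geq 2$, which is exactly where the Betti-number bookkeeping breaks down. Note that the alternating-product identity holds for any finite complex of finite groups, so you do not need the complex to be exact.

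One step needs a sentence more than you gave it. Set $Y=P^{\bullet}\cup\partial\Lambda^{\bullet}$ and consider the long exact sequence of $(Y,\partial\Lambda^{\bullet})$. It is not enough that the groups $H_j(\partial\Lambda^{\bullet};\Z_q)$ are independent of $P$. The maps from them into $H_j(Y)$ do depend on $P$, so you must show their images and kernels have $P$-independent size.

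This is easy with reduced homology. Since $\partial\Lambda^{\bullet}\cong S^{d-1}$, we have $\tilde H_j(\partial\Lambda^{\bullet})=0$ for $j\neq d-1$. Because $d-i-1\leq d-2$, both neighbours of $H_{d-i-1}(Y,\partial\Lambda^{\bullet})$ in the reduced sequence vanish. Hence $H_{d-i-1}(Y,\partial\Lambda^{\bullet})\cong\tilde H_{d-i-1}(Y;\Z_q)$. Its order differs from $\abs{H_{d-i-1}(Y;\Z_q)}$ by the constant factor $q$ when $i=d-1$, and not at all otherwise.

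Also record that $H_j(Y,\partial\Lambda^{\bullet})$ for $j\leq d-i-2$ depends only on the full $(d-i-1)$-skeleton. With these two points your chain
\[
\abs{H^{i-1}(P)}\propto\abs{H_{d-i}(Y,\partial\Lambda^{\bullet})}\propto q^{\abs{P^{\bullet}}}\abs{H_{d-i-1}(Y)}=q^{\abs{P^{\bullet}}}\abs{H^{d-i-1}(Y)}
\]
goes through. As a check, for $d=2$ it reduces to the planar Euler-formula identity $q^{\kappa(P)}\propto q^{\abs{P^{\bullet}}}q^{\kappa^{\mathbf{w}}(P^{\bullet})}$.
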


\section{Topological Preliminaries}
\label{sec:preliminaries}
When $q$ is prime and in a couple other special cases, the homology $H_k\paren{P;\Z_q}$ and cohomology $H^K\paren{P;\Z_q}$ are both isomorphic to $\paren{\Z_q}^{\beta_k\paren{P;\Z_q}}$ where $\beta_k\paren{P;\Z_q}$ is called the $k$-th Betti number. In these cases, there is a direct relationship between the event $V_{\sigma}$ and the effect of adding/removing $\sigma$ from $P$ on the Betti numbers of $P.$ This allows us to relate $V_{\sigma}$ to the event $V_{\sigma^{\bullet}}$ in the dual.

When $q$ is prime, $H_k\paren{X;\Z_q}$ is a vector space over the field $\Z_q$ and we define the $k$-th Betti number $\beta_k\paren{X,q}$ to be its dimension. We extend the definition of the Betti numbers to non-prime $q$ in special cases. When $k=0,$ $H_0\paren{X;\Z_q}\cong \paren{\Z_q}^{\kappa\paren{X}}$ where $\kappa\paren{X}$ is the number of components of $X,$ and when $X$ is a bounded subcomplex of $\Z^d$ we have that $H_0\paren{X;\Z_q}\cong \paren{\Z_q}^{\kappa\paren{\R^d\setminus X}-1}.$ The latter is a consequence of Alexander duality, which we state below. As such, we set $\beta_0\paren{X;q}=\kappa\paren{X}$ and  $\beta_{d-1}\paren{X;q}=\kappa\paren{\R^d\setminus X}-1.$ Moreover, if $P$ is an $\paren{d-1}$-dimensional percolation subcomplex we also have that  
$$H_{d-2}\paren{P;\Z_q}\cong \Z_q^{\beta_{{d-2}}\paren{P,2}}$$
and we set $\beta_{d-2}\paren{P,q}=\beta_{{d-2}}\paren{P,2}$ in that case as well. See Proposition 59 in~\cite{duncan2025sharp}. 

In these special cases, the cohomology is isomorphic to the homology and is determined by the Betti numbers. This follows from the universal coefficient theorem for cohomology (Theorem 3.2 in~\cite{hatcher2002algebraic}), except the latter case which is Proposition 59 in~\cite{duncan2025sharp}. 

\begin{Lemma}
Let $X$ be a bounded subcomplex of $\Z^d.$ If $q$ is prime, $k=0,$ $k=d-1,$ or $k=d-2$ and $X$ is an $\paren{d-1}$-dimensional percolation subcomplex then 
$$H_k\paren{X;\Z_q} \cong H^k\paren{X;\Z_q} \cong \paren{Z_q}^{\beta_k\paren{X,q}}.$$  
\end{Lemma}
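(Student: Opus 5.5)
The plan is to treat the three cases separately. In each case we compute the homology group using a standard structural fact. We then pass to cohomology via the universal coefficient theorem for cohomology (Theorem 3.2 in~\cite{hatcher2002algebraic}). Throughout, $X$ is a finite cubical complex, so all integral homology groups $H_k\paren{X;\Z}$ are finitely generated.

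\emph{Case $q$ prime.} Here $\Z_q$ is a field, so the chain complex $C_\bullet\paren{X;\Z_q}$ consists of finite-dimensional vector spaces. Its homology $H_k\paren{X;\Z_q}$ is then $\paren{\Z_q}^{\beta_k\paren{X,q}}$ by definition of $\beta_k$. The cochain complex is the vector-space dual $\mathrm{Hom}\paren{C_\bullet,\Z_q}$. Dualizing is exact over a field, so $H^k\paren{X;\Z_q}\cong \mathrm{Hom}\paren{H_k\paren{X;\Z_q},\Z_q}$, which has the same dimension. This is the field version of the universal coefficient theorem.

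\emph{Case $k=0$.} We have $H_0\paren{X;\Z}\cong \Z^{\kappa\paren{X}}$, which is free. Hence the $\mathrm{Tor}$ and $\mathrm{Ext}$ terms involving $H_{-1}=0$ vanish. The universal coefficient theorems then give $H_0\paren{X;\Z_q}\cong \Z_q^{\kappa\paren{X}}$ and $H^0\paren{X;\Z_q}\cong \mathrm{Hom}\paren{H_0\paren{X;\Z},\Z_q}\cong \Z_q^{\kappa\paren{X}}$. One can also check this directly: a $0$-cocycle is a function that is constant on components.

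\emph{Case $k=d-1$.} Since $X\subset\R^d$ is a finite complex, $H_d\paren{X;\Z}=0$. Alexander duality shows that $H_{d-1}\paren{X;\Z}\cong \tilde H^0\paren{S^d\setminus X;\Z}$ is free of rank $\kappa\paren{\R^d\setminus X}-1$. Moreover $H_{d-2}\paren{X;\Z}\cong \tilde H^1\paren{S^d\setminus X;\Z}$, and $H^1$ of any space is torsion-free. Its $\mathrm{Ext}$ contribution to $H^{d-1}$ therefore vanishes, because $\mathrm{Ext}\paren{\Z^r,\Z_q}=0$. The $\mathrm{Tor}\paren{H_{d-2},\Z_q}$ term in homology also vanishes. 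Both groups are thus $\Z_q^{\kappa\paren{\R^d\setminus X}-1}$.

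\emph{Case $k=d-2$ with $X$ a $(d-1)$-dimensional percolation subcomplex.} This is Proposition 59 of~\cite{duncan2025sharp}, which gives $H_{d-2}\paren{X;\Z_q}\cong\Z_q^{\beta_{d-2}\paren{X,2}}$ and the matching cohomology statement. If we wanted a self-contained argument, we would proceed as follows. Alexander duality identifies $H_{d-2}\paren{X;\Z}\cong \tilde H^1\paren{S^d\setminus X;\Z}$, which is torsion-free. The torsion of $H_{d-3}\paren{X;\Z}\cong \tilde H^{2}\paren{S^d\setminus X}$ would need to be controlled; it is here that the percolation structure is used. Since $X$ contains the full $(d-2)$-skeleton, the complement deformation retracts onto a subcomplex of the dual graph of $(d-1)$-cells. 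This makes its $H^2$ trivial, so $H_{d-3}\paren{X;\Z}$ is free. The universal coefficient theorem then yields isomorphic free $\Z_q$-modules of rank $\beta_{d-2}\paren{X,2}$. I expect this last case to be the main obstacle: verifying that the complement is homotopy equivalent to a graph, and handling unbounded components via $S^d$. For it I would simply cite Proposition 59.
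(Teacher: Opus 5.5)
Your proposal is correct and takes the same route as the paper. The paper's proof uses the universal coefficient theorem for cohomology in the prime, $k=0$ and $k=d-1$ cases, with Alexander duality supplying the rank $\kappa(\R^d\setminus X)-1$, and it cites Proposition 59 of \cite{duncan2025sharp} for the $k=d-2$ percolation case. You also supply a step the paper leaves implicit: that $H_{d-2}(X;\Z)$ is torsion-free, because $H^1$ of the complement is torsion-free, which is what makes the $\mathrm{Tor}$ and $\mathrm{Ext}$ terms vanish in degree $d-1$.
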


The addition of an $i$-cell to $P$ affects only $\beta_i\paren{P}$ and $\beta_{i-1}\paren{P},$ and in particular can only increase the former and can only decrease the latter. It follows from the rank-nullity theorem that the addition of an $i$-cell cannot increase $\beta_i\paren{P}$ by more than one or decrease $\beta_{i-1}\paren{P}$ by more than one.  When $q$ is prime, the Euler-Poincar\'{e} formula (see Theorem 2.44 of~\cite{hatcher2002algebraic}) tells us that adding a single $i$-cell increases the quantity $\beta_i\paren{P}-\beta_{i-1}\paren{P}$ by exactly one, yielding the following dichotomy:

\begin{Lemma}\label{lemma:addone}
Let $P$ be an $i$-dimensional percolation subcomplex of $\Lambda\subset \Z^d$ let $\sigma$ be an $i$-cell that is not contained in $P.$ If $q$ is prime, $i=1,$ or $i=d-1,$ then either
$$\beta_{i-1}\paren{P\cup \sigma}=\beta_{i-1}\paren{P}-1, \beta_i\paren{P\cup \sigma}=\beta_i\paren{P}, P\notin V_{\sigma}$$
or
$$\beta_{i-1}\paren{P\cup \sigma}=\beta_{i-1}\paren{P}, \beta_i\paren{P\cup \sigma},\beta_i\paren{P\cup \sigma}=\beta_i\paren{P}+1, P\in V_{\sigma}\,.$$
\end{Lemma}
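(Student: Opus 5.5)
Let $\partial\sigma\in Z_{i-1}\paren{\Lambda;\Z_q}$ be the boundary of the new $i$-cell $\sigma$ and $\iota:P\hookrightarrow P\cup\sigma$ the inclusion. The plan is to compare the chain complexes of $P$ and $P\cup\sigma$ through the long exact sequence of the pair $\paren{P\cup\sigma,P}$. The relative chain complex is $\Z_q$ in degree $i$ and zero elsewhere, since $P\cup\sigma$ differs from $P$ only by the single $i$-cell $\sigma$; all its faces already lie in $P$ because $P$ contains the full $(i-1)$-skeleton. The long exact sequence therefore collapses to
\[
0\to H_i\paren{P;\Z_q}\to H_i\paren{P\cup\sigma;\Z_q}\to \Z_q \xrightarrow{\ \partial_*\ } H_{i-1}\paren{P;\Z_q}\to H_{i-1}\paren{P\cup\sigma;\Z_q}\to 0,
\]
and all other homology groups are unchanged. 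The connecting map sends the generator $1\in\Z_q$ to $\brac{\partial\sigma}\in H_{i-1}\paren{P;\Z_q}$. Consequently $P\in V_\sigma$ exactly when $\partial_*=0$, i.e.\ when $\brac{\partial\sigma}=0$.

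With the sequence in hand, I split into the two cases.

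\emph{Case $\brac{\partial\sigma}=0$.} Then $H_{i-1}$ is unchanged and $0\to H_i\paren{P}\to H_i\paren{P\cup\sigma}\to\Z_q\to0$ is exact.

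\emph{Case $\brac{\partial\sigma}\neq 0$.} Then $H_i$ is unchanged and $0\to\langle\brac{\partial\sigma}\rangle\to H_{i-1}\paren{P}\to H_{i-1}\paren{P\cup\sigma}\to0$ is exact, where the cyclic subgroup $\langle\brac{\partial\sigma}\rangle$ is the image of $\Z_q$.

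When $q$ is prime every group is a $\Z_q$-vector space and the connecting map is linear, so dimensions are additive along these sequences. The first case then gives $\beta_i$ increasing by one with $\beta_{i-1}$ fixed, and the second gives $\beta_{i-1}$ decreasing by one with $\beta_i$ fixed. This is exactly the dichotomy, with the $V_\sigma$ membership read off from $\partial_*$ as above.

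For non-prime $q$ in the special dimensions, the homology is free over $\Z_q$ by the preceding Lemma and the Betti numbers are defined via the order $\abs{H_k}=q^{\beta_k}$. So it suffices to compare orders, which are multiplicative along short exact sequences. The first case is fine for any $q$: the quotient is all of $\Z_q$, so the order of $H_i$ multiplies by $q$ and $\beta_i$ goes up by one.

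The main obstacle is the second case, where I must show the cyclic subgroup $\langle\brac{\partial\sigma}\rangle$ has full order $q$ rather than a proper divisor.
\begin{itemize}
\item For $i=1$ this is immediate: $H_0\paren{P;\Z_q}\cong\Z_q^{\kappa}$, and $\brac{\partial\sigma}=\brac{w}-\brac{v}$ for the two endpoints of the edge. If this class is nonzero, the endpoints lie in distinct components, so it is a difference of two basis vectors and has order exactly $q$ (equivalently, $\kappa$ drops by one).
\item For $i=d-1$ I would pass to the dual via Alexander duality / Theorem~\ref{thm:duality}. Adding $\sigma$ corresponds to deleting the dual edge $\sigma^\bullet$ from the dual $1$-complex, which reduces to the graph statement about components that was just verified. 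Alternatively, I would use the identification $H_{d-2}\paren{P;\Z_q}\cong\Z_q^{\beta_{d-2}\paren{P,2}}$ of Proposition 59 in~\cite{duncan2025sharp}, together with the fact that $\partial\sigma$ is an integral cycle. Then $\brac{\partial\sigma}$ being nonzero modulo $q$ forces it to be nonzero modulo every prime divisor of $q$, which gives order $q$.
\end{itemize}
I expect this last verification to need the most care. If the order-$q$ claim is hard to get directly, the fallback is to use the $\Z_2$-Betti comparison to count dimensions.
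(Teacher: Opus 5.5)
Your argument is correct, but it takes a different route from the paper. The paper argues by linear algebra over a field:
\begin{itemize}
\item adding $\sigma$ can only enlarge $Z_i(P)$ and $B_{i-1}(P)$;
\item rank--nullity bounds each change by one;
\item the Euler--Poincar\'e formula, for prime $q$, forces $\beta_i-\beta_{i-1}$ to rise by exactly one, so exactly one of the two changes happens.
\end{itemize}
The paper leaves implicit which alternative corresponds to $V_\sigma$. Its argument as written really only covers prime $q$: it never checks that $V_\sigma$, defined with $\Z_q$ coefficients, matches the Betti-number dichotomy when $q$ is composite. Your long exact sequence of the pair $(P\cup\sigma,P)$ encodes the same rank computation. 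The difference is that it identifies $V_\sigma$ directly as the vanishing of the connecting map $1\mapsto[\partial\sigma]$, and it works with orders of $\Z_q$-modules. So it genuinely handles non-prime $q$ in the cases $i=1$ and $i=d-1$, which the paper's sketch does not.

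The step you flag as the main obstacle is easier than you expect. $P\cup\sigma$ is again a percolation subcomplex of the same type, so $H_{i-1}(P\cup\sigma;\Z_q)$ is also free; this follows from the preceding Lemma with $k=0$ or $k=d-2$. The exact sequence $0\to\langle[\partial\sigma]\rangle\to H_{i-1}(P;\Z_q)\to H_{i-1}(P\cup\sigma;\Z_q)\to 0$ then shows that the order of $[\partial\sigma]$ is a ratio of two powers of $q$ and also divides $q$. Hence it is $1$ or $q$, and no duality is needed.

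Your duality option for $i=d-1$ also works. It uses $\tilde H_{d-1}(P)\cong\tilde H^0(P^\ast)$ and $\tilde H_{d-2}(P)\cong\tilde H^1(P^\ast)$, together with the fact that deleting an edge from a graph either disconnects it or lowers its first Betti number by one.

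Your ``integral cycle'' alternative, however, has an unjustified step. An integral class that is nonzero mod $q$ need not be nonzero mod every prime divisor of $q$; take twice a primitive class with $q=4$. To make that route work you would need $[\partial\sigma]$ to be zero or primitive in $H_{d-2}(P;\Z)$. That in turn comes from torsion-freeness of $H_{d-2}(P\cup\sigma;\Z)$, which is the same freeness input as in the order count above.
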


Alexander duality relates the homology of a subset of Euclidean space to the cohomology of its complement. We give an elementary statement and proof of this result for subcomplexes of $\Z^d.$ Define the discrete Hodge star operator $\ast:C^i\paren{\paren{\Z^d}^{\bullet};\Z_q}\to C_{d-i}\paren{\Z^d;\Z_q}$ by setting $\ast f_{\sigma}=\sigma^{\bullet}$ and extending linearly, where $f_{\sigma}$ is the cochain assigning $1$ to an $i$-cell $\sigma$ of $\Z^d$ and 0 to all other $i$-cells and we have oriented the cells of $\Z^d$ and $\paren{\Z^d}^{\bullet}$ so that $\ast \delta=\partial \ast.$ Note that this also ensures that $\ast^{-1}\partial=\delta \ast^{-1}.$ Although we had previously defined $P^{\bullet}$ for a percolation subcomplex $P,$ this can be extended to general subcomplexes $P$ by defining $P^{\bullet}$ to be the subcomplex of $\paren{\Z^d}^{\bullet}$ which contains precisely the $j$-cells $\sigma^{\bullet}$ for all $\sigma\in \paren{\Z^d}^{\bullet}$ so that $\sigma\notin P.$ Observe that if $P$ is a subcomplex of a cube $\Lambda$ then $P^{\bullet}$ deformation retracts onto  $P^{\ast}=\paren{P^{\bullet}\cap \Lambda^{\bullet}}\cup \partial \Lambda^{\bullet},$ so those spaces have the same (co)homology. 

Alexander duality is stated in terms of the reduced homology and cohomology groups $\tilde{H}_k\paren{P;\Z_q}$ and $\tilde{H}^k\paren{P;\Z_q}.$ The former is obtained by setting $C_{-1}\paren{P;\Z_q}=\Z_q,$ defining $\partial_0:C_{0}\paren{P;\Z_q}\to C_{-1}\paren{P;\Z_q}$ by setting $\partial_0\paren{v}=1$ for each vertex $v$ of $X$ and extending linearly, and $\tilde{H}_0\paren{P;\Z_q}=\ker \partial_0/\im \partial_1.$ Reduced cohomology is defined analogously, where $\delta^0:\Z_q\to C^0\paren{X;\Z_q}$ sends $t\in \Z_q$ to the constant function that equals $t$ on each vertex of $X.$ We have that $\tilde{H}_k\paren{P;\Z_q}\cong H_k\paren{P;\Z_q}$ and $\tilde{H}^k\paren{P;\Z_q}\cong H^k\paren{P;\Z_q}$ for $q>0$ and $\tilde{H}_0\paren{P;\Z_q}\cong H_0\paren{P;\Z_q}/\Z_q$ and $\tilde{H}_0\paren{P;\Z_q}\cong H_0\paren{X;\Z_q}/\Z_q.$  Importantly, we will use in the proof that $\tilde{H}_k\paren{\Lambda;\Z_q}\cong \tilde{H}^k\paren{\Lambda;\Z_q}\cong 0,$ which fails for $k=0$ for regular homology and cohomology. 

\begin{Theorem}[Alexander Duality]\label{thm:alexander}
Let $P$ be a subcomplex of $\Lambda$ and define the map $\mathcal{I}:\tilde{H}^{d-k-1}\paren{P^{\bullet};\Z_q}\to \tilde{H}_{k}\paren{P;\Z_q}$ by setting $\mathcal{I}\paren{\brac{f}}=\brac{\partial \paren{\ast f}}.$ Then $\mathcal{I}$ is well-defined and is an isomorphism.
\end{Theorem}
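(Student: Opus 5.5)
The plan is an elementary cochain-level proof built from the Hodge star $\ast$ and the identities $\ast\delta=\partial\ast$ and $\ast^{-1}\partial=\delta\ast^{-1}$. Orient cells so that these hold throughout. The key input is that the ambient box has trivial reduced (co)homology: $\tilde H_j\paren{\Lambda;\Z_q}=0$ and $\tilde H^j\paren{\Lambda^{\bullet};\Z_q}=0$ for all $j$, and $P^{\bullet}$ may be replaced by $P^{\ast}$ without changing its cohomology. We work with reduced chain and cochain complexes, augmented in degree $-1$ as in the text, so that the case $k=0$ needs no separate argument.

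\textbf{Well-definedness.} Let $f$ be a $(d-k-1)$-cocycle on $P^{\bullet}$. Extend $f$ by zero to a cochain on all of $(\Z^d)^{\bullet}$ (or on $\Lambda^{\bullet}$); call the extension $\hat f$. Then $\delta\hat f$ is supported on $(d-k)$-cells of the ambient complex that are not in $P^{\bullet}$. The dual of such a cell is a $k$-cell lying in $P$.
\begin{itemize}
\item $\partial\ast\hat f=\ast\delta\hat f$, so $\partial\ast\hat f$ is a $k$-chain supported in $P$.
\item It is a cycle, since $\partial\partial=0$.
\item If $f=\delta g$ on $P^{\bullet}$, then $\hat f-\delta\hat g$ is supported on cells outside $P^{\bullet}$. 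Hence $\ast(\hat f-\delta\hat g)$ is a $(k+1)$-chain in $P$ whose boundary is $\partial\ast\hat f-\partial\partial\ast\hat g=\partial\ast\hat f$.
\item Changing the extension (or the choice of dual box) changes $\ast\hat f$ by a chain in $P$, which alters $\partial\ast\hat f$ only by a boundary in $P$.
\end{itemize}
Thus $\mathcal I$ is a well-defined homomorphism.

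\textbf{Surjectivity.} Let $z$ be a reduced $k$-cycle in $P$. Since $\tilde H_k\paren{\Lambda}=0$, there is a $(k+1)$-chain $c$ in $\Lambda$ with $\partial c=z$. Set $f=\ast^{-1}c$ and restrict it to $P^{\bullet}$. Then $\delta f=\ast^{-1}\partial c=\ast^{-1}z$ is supported on duals of cells of $P$, so it vanishes on $P^{\bullet}$, and $f$ is a cocycle there with $\mathcal I\brac{f}=\brac{z}$.

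\textbf{Injectivity.} Suppose $\partial\ast\hat f=\partial b$ with $b$ a $(k+1)$-chain in $P$. Then $\ast\hat f-b$ is a $(k+1)$-cycle in $\Lambda$. By acyclicity it equals $\partial e$ for some $(k+2)$-chain $e$. Set $g=\ast^{-1}e$. Then
$$\delta g=\ast^{-1}\partial e=\hat f-\ast^{-1}b.$$
Restricted to $P^{\bullet}$, the term $\ast^{-1}b$ vanishes because $b$ is supported in $P$. Hence $f=\delta g|_{P^{\bullet}}$ is a coboundary.

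\textbf{Main obstacle.} The difficulty is not the algebra but the bookkeeping at the boundary of the box and in degrees $0$ and $-1$. The chains $c$ and $e$ obtained from acyclicity must have duals that live in the chosen dual complex. That is why one uses $P^{\ast}=\paren{P^{\bullet}\cap\Lambda^{\bullet}}\cup\partial\Lambda^{\bullet}$ and the retraction remark: the duals of cells of $\Lambda$ are exactly the interior cells of $\Lambda^{\bullet}$, so the boundary cells of $\Lambda^{\bullet}$ must be treated separately. Within this bookkeeping, the augmentation maps must also intertwine under $\ast$ in the reduced degrees. I would check these two points explicitly and handle the extreme degrees $k=0$ and $k=d-1$ by direct inspection.
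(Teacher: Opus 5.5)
Your three steps follow the paper's proof essentially line for line. Well-definedness uses the chain $\ast(\hat f-\delta\hat g)$ supported in $P$. Surjectivity fills $z$ in $\Lambda$ and dualizes; the paper first discards the part of the filling lying in $P$, while you appeal to independence of the extension, which amounts to the same thing. Injectivity fills $\ast\hat f-b$ one degree up; there you use tacitly, and the paper checks via the splitting $h=h'+h''$, that $(\delta g)|_{P^{\bullet}}=\delta_{P^{\bullet}}(g|_{P^{\bullet}})$ because $P^{\bullet}$ is closed under faces.

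\textbf{The deferred step is a genuine gap.} The boundary issue you postpone is more than bookkeeping, and the paper's proof also passes over it. Well-definedness and injectivity both need $\ast\hat f$ to be a finite chain in $\Lambda$. Equivalently, $f$ must be supported on duals of cells of $\Lambda$, which are exactly the cells of $\Lambda^{\bullet}$ not in $\partial\Lambda^{\bullet}$. A cocycle on $P^{\ast}$ need not have this form. If it charges a cell of $\partial\Lambda^{\bullet}$, then:
\begin{itemize}
\item $\delta\hat f$ can be nonzero on dual cells outside $\Lambda^{\bullet}$; these lie in $P^{\bullet}$, so your claim that $\delta\hat f$ vanishes on $P^{\bullet}$ fails;
\item $\partial\ast\hat f$ leaves $P$;
\item acyclicity of $\Lambda$ cannot be applied to $\ast\hat f-b$.
\end{itemize}
Arbitrary cocycles on the infinite complex $P^{\bullet}$ are no better: $\ast\hat f$ may be an infinite chain, and then $\partial\ast\hat f$ need not be a reduced cycle.

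\textbf{The repair for $k\geq 1$.} The restriction $f|_{\partial\Lambda^{\bullet}}$ is a $(d-k-1)$-cocycle on $\partial\Lambda^{\bullet}\cong S^{d-1}$ with $d-k-1\leq d-2$. Hence it is a coboundary, or a constant when $d-k-1=0$, which dies in reduced cohomology. So $f$ may be replaced by a cohomologous cocycle vanishing on $\partial\Lambda^{\bullet}$, and the same argument one degree lower normalizes the $g$ in your well-definedness step. On such cochains, $\ast$ identifies $C^{d-j}(P^{\ast},\partial\Lambda^{\bullet};\Z_q)$ with $C_j(\Lambda;\Z_q)/C_j(P;\Z_q)$. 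Then $\mathcal{I}$ is just the connecting isomorphism $H_{k+1}(\Lambda,P)\to\tilde H_k(P)$ of the pair, which your three steps verify by hand.

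\textbf{For $k=0$ no repair is possible.} The statement with $P^{\ast}$ (equivalently, with ordinary cochains on $P^{\bullet}$) is false there. Take $P=\set{v}$ for a vertex $v$ in the interior of $\Lambda$. Then $\tilde H_0(P;\Z_q)=0$. But $P^{\ast}=\Lambda^{\bullet}\setminus\mathrm{int}(v^{\bullet})\simeq S^{d-1}$, so $\tilde H^{d-1}(P^{\ast};\Z_q)\cong\Z_q$. This class is generated by (the restriction to $P^{\ast}$ of) $\ast^{-1}$ of an edge path from $v$ to a vertex outside $\Lambda$, which is precisely a cocycle charging $\partial\Lambda^{\bullet}$. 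In general $\tilde H^{d-1}(P^{\ast})\cong\tilde H_0(P)\oplus\Z_q\cong H_0(P)$ for nonempty $P$, so at $k=0$ the target must be the unreduced $H_0(P)$.

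Alternatively, your argument does go through verbatim for every $k$ under a different reading. Interpret $\tilde H^{d-k-1}(P^{\bullet})$ as the cohomology of finitely supported cochains on the infinite complex $P^{\bullet}$ (unreduced in degree $0$), and use acyclicity of $\Z^d$ in place of that of $\Lambda$. That group, however, cannot be computed on $P^{\ast}$ via the deformation retraction, since compactly supported cohomology is not a homotopy invariant. Either way, the ``direct inspection'' of the extreme degrees that you defer is exactly where the proof needs an extra idea or the statement needs correcting.
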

\begin{proof}
 For this proof, it will be important that the coboundary operator on $\Z^d$ differs from that on $P^{\bullet}.$ We denote the former simply by $\delta$ and the latter by $\delta_{P^{\bullet}}.$ Note that $\delta_{P^{\bullet}} f=\paren{\delta f}\mid_{P^{\bullet}}.$  

Let $f\in Z^{d-k-1}\paren{P^{\bullet};\Z_q}.$ To show that $\mathcal{I}\paren{\brac{f}}$ is well-defined, we demonstrate that $\partial \ast f$ is a cycle supported on $P$ and that its homology class does not depend on the choice of representative of $\brac{f}.$ Since $\delta_{P^{\bullet}} f=0,$ $\delta f$ is supported on the complement of $P^{\bullet}$ and $\partial \paren{\ast f}=\ast \delta f$ is supported on $P.$ It is a cycle by construction. Moreover, if $g\in {C^{d-k-2}\paren{P^{\bullet};\Z_q}},$ then 
$$\tau \coloneqq \ast\paren{\delta_{P^{\bullet}}g-\delta g}$$
is supported on $P$ and 
$$\mathcal{I}\paren{\delta_{P^{\bullet}} g}=\partial \paren{\ast \delta_{P^{\bullet}}g}= \partial \tau + \partial \paren{\ast \delta g} =\partial \tau$$
so $\brac{\mathcal{I}\paren{\delta_{P^{\bullet}}g}}=0\in H_{k}\paren{P;\Z_q}$ and we can conclude that $\mathcal{I}$ is well-defined. Observe that this fails if we do not use reduced homology and  $k=d-1$: if $f$ is the cochain which assigns $1$ to each vertex of $P^{\bullet}$ then $\brac{f}\neq 0\in H^0\paren{P^{\bullet}}$ but 
$$\partial \paren{\ast f} =\partial \sum_{\sigma\notin P} \sigma=\partial \sum_{\sigma\in \paren{\Z^d}}\sigma-\partial \sum_{\sigma\in P}\sigma=-\partial \sum_{\sigma\in P}\sigma$$
is a boundary, where the sums are taken over $d$-cells $\sigma.$ 

Next, we show that $\mathcal{I}$ is injective.  Suppose that $\mathcal{I}\paren{f}=\partial \tau,$ where $\tau\in C_{k+1}\paren{P;\Z_q}.$ Then $\partial \paren{\ast f-\tau}=0,$ so there exists a $\kappa \in C_{k+2}\paren{\Lambda;\Z_q}$ with $\partial \kappa=\ast f-\tau$ since $H_{k+1}\paren{\Lambda;\Z_q}=0.$  Set $h=\paren{\ast^{-1}}\paren{\kappa}\in C^{d-k-2}\paren{\Lambda^{\bullet};\Z_q}$ and write $h=h'+h''$ where $h'\in C^{d-k-2}\paren{P^{\bullet};\Z_q}$ and $h''$ is supported on cells not contained in $P^{\bullet}.$   Since $P^{\bullet}$ is a cell complex, it is closed under inclusion and the support of $h''$ cannot contain any cells incident to a $\paren{d-k-1}$-cell of $P^{\bullet}.$ As such,
$$\delta h\mid_{P^{\bullet}}=\delta h'\mid_{P^{\bullet}}=\delta_{P^{\bullet}} h'\,.$$
On the other hand,
$$\delta h\mid_{P^{\bullet}}=\paren{\ast^{-1}\partial\kappa}\mid_{P^{\bullet}}=\paren{f-\ast^{-1}\paren{\tau}}\mid_{P^*}=f$$
since $\tau$ is supported on $P.$ Thus $\brac{f}=\brac{\delta_{P^{\bullet}} h'}=0$ in $H^{d-k-1}\paren{P^{\bullet};\Z_q}.$

Finally, we demonstrate that $\mathcal{I}$ is surjective. Let $\brac{\tau}\in  \tilde{H}_{k}\paren{P;\Z_q}.$ Since $\tilde{H}_{k}\paren{\Lambda;\Z_q}=0$ is a $\kappa\in C_{k+1}\paren{\Lambda;\Z_q}$ with $\partial \kappa=\tau.$ Write $\kappa=\kappa'+\kappa''$ where $\kappa'$ is supported on $P$ and $\kappa''$ is supported on cells not contained in $P.$ Since $\brac{\tau}=\brac{\tau-\partial \kappa'}$ in  $\tilde{H}_{k}\paren{P;\Z_q},$ we may replace $\kappa$ with $\kappa''$ if necessary so that $\kappa$ is supported on the cells not contained in $P.$ Set $h=\ast^{-1} \kappa.$ Then $h\in Z^{d-k-1}\paren{P^{\bullet};\Z_q}$ because $\delta h=\ast^{-1}\paren{\tau}$ is supported on cells not contained in $P^{\bullet}.$ Also,
$$\mathcal{I}\paren{h}=\partial \paren{\ast h}=\partial \kappa=\tau.$$ 
\end{proof}

As an immediate consequence, we can express the event $V_{\sigma}$ in terms of the cohomology of the dual complex.

\begin{Corollary}\label{cor:dualcomponent}
Let $\sigma$ be an $i$-face of $\Lambda$ and let $P$ be an $i$-dimensional percolation subcomplex of $\Lambda.$ Then $\brac{\partial \sigma}\neq 0 \in H_{i-1}\paren{P;\Z_q}$ if and only if $\brac{f_{\sigma^{\bullet}}}\neq 0 $ in $H^{d-i}\paren{P^{\ast};\Z_q}.$ 
\end{Corollary}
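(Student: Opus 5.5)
We apply Theorem~\ref{thm:alexander} with $k=i-1$, so that $d-k-1=d-i$. The dual $i$-cell $\sigma^{\bullet}$ is a $(d-i)$-cell of $(\Z^d)^{\bullet}$. Write $f_{\sigma^{\bullet}}$ for the cochain supported on that single cell, normalized so that $\ast f_{\sigma^{\bullet}}=\sigma$.

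The plan has four steps.

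First, we check that $f_{\sigma^{\bullet}}$ is a cocycle on $P^{\bullet}$ (equivalently on $P^{\ast}$). Note that $\sigma\notin P$ is forced whenever $[\partial\sigma]\neq 0$. Indeed, if $\sigma\in P$ then $\partial\sigma$ is a boundary in $P$, and dually $\sigma^{\bullet}\notin P^{\bullet}$, so $f_{\sigma^{\bullet}}$ restricts to the zero cochain. Hence both statements are false and we may assume $\sigma\notin P$, i.e.\ $\sigma^{\bullet}\in P^{\bullet}$.

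Since $P$ is an $i$-dimensional percolation subcomplex, $P^{\bullet}$ contains no dual $(d-i+1)$-cells, because every $(i-1)$-cell lies in $P$. Thus $C^{d-i+1}(P^{\bullet};\Z_q)=0$ and every $(d-i)$-cochain on $P^{\bullet}$ is a cocycle. In particular $[f_{\sigma^{\bullet}}]\in \tilde H^{d-i}(P^{\bullet};\Z_q)$ makes sense.

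Second, we compute the image under the Alexander duality map: $\mathcal{I}([f_{\sigma^{\bullet}}])=[\partial(\ast f_{\sigma^{\bullet}})]=[\partial\sigma]\in\tilde H_{i-1}(P;\Z_q)$. Because $\mathcal{I}$ is an isomorphism (Theorem~\ref{thm:alexander}), $[\partial\sigma]\neq 0$ holds if and only if $[f_{\sigma^{\bullet}}]\neq 0$ in $\tilde H^{d-i}(P^{\bullet};\Z_q)$.

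Third, we pass from reduced to unreduced groups and from $P^{\bullet}$ to $P^{\ast}$. When $i-1>0$ and $d-i>0$, reduced and unreduced (co)homology agree. The edge cases need separate treatment.

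The case $i=1$ involves $\tilde H_0$. There $\partial\sigma=w-v$ lies in $\ker\partial_0$. A class in $\ker\partial_0$ is zero in $\tilde H_0$ exactly when it is zero in $H_0$, since $\tilde H_0\to H_0$ is injective. So the equivalence persists.

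The case $i=d$ has dual degree $0$. There $f_{\sigma^{\bullet}}$ is supported on a single vertex. A single-vertex cochain is nonzero in $\tilde H^0$ iff it is nonconstant on the components of $P^{\bullet}$. It is nonzero in $H^0$ iff it is nonzero on some component, and it is nonzero there trivially. This edge case is the one requiring care. The statement concerns $P^{\ast}$, which includes $\partial\Lambda^{\bullet}$ and hence is connected to "infinity", so I would verify the intended convention there or restrict to $i<d$.

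Finally, we use the observation preceding Theorem~\ref{thm:alexander}: $P^{\bullet}$ deformation retracts onto $P^{\ast}$. The cochain $f_{\sigma^{\bullet}}$ is supported on a cell of $P^{\bullet}\cap\Lambda^{\bullet}$, so its class corresponds under the restriction isomorphism to the class of $f_{\sigma^{\bullet}}$ on $P^{\ast}$.

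The main obstacle is bookkeeping rather than ideas. One issue is matching orientations and signs so that $\ast f_{\sigma^{\bullet}}=\pm\sigma$, which is harmless since the sign does not affect nonvanishing. The other is handling the reduced versus unreduced edge cases and the identification of $P^{\bullet}$ with $P^{\ast}$.
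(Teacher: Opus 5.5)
Your route is the paper's: the corollary is read off from Theorem~\ref{thm:alexander} with $k=i-1$ via $\mathcal{I}\paren{\brac{f_{\sigma^{\bullet}}}}=\brac{\partial\sigma}$. The reduced-versus-unreduced comparison for $i=1$ and the passage from $P^{\bullet}$ to $P^{\ast}$ are handled correctly. The paper says only ``immediate consequence,'' so your bookkeeping fills in what it leaves implicit.

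One supporting claim is false, however: $C^{d-i+1}\paren{P^{\bullet};\Z_q}$ need not vanish. Only the $(i-1)$-cells \emph{of $\Lambda$} are forced into $P$. An $(i-1)$-cell $\tau\not\subset\Lambda$, for instance one sticking out of $\Lambda$ in some coordinate, is not in $P$, so $\tau^{\bullet}\in P^{\bullet}$. Such cells make up $\partial\Lambda^{\bullet}\subset P^{\ast}$, which has cells of every dimension up to $d-1$. Hence for $i\geq 2$ the complex $P^{\ast}$ has $(d-i+1)$-cells, and not every $(d-i)$-cochain on it is a cocycle.

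What you actually need is still true by a local argument. The support of $\delta f_{\sigma^{\bullet}}$ consists of dual cells $\tau^{\bullet}$ with $\tau$ an $(i-1)$-face of $\sigma$. Each such $\tau$ lies in $\Lambda$, hence in $P$, so $\tau^{\bullet}\notin P^{\bullet}$ and $\delta_{P^{\bullet}}f_{\sigma^{\bullet}}=0$.

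Your hesitation about $i=d$ is also unnecessary. If $\sigma\notin P$, then $\sigma^{\bullet}$ is an isolated vertex of $P^{\ast}$, since all $(d-1)$-faces of $\sigma$ lie in $P$. It lies in the interior of $\Lambda^{\bullet}$, disjoint from the nonempty $\partial\Lambda^{\bullet}\subset P^{\ast}$. So $f_{\sigma^{\bullet}}$ is a nonconstant locally constant function and is nonzero in both $\tilde{H}^0$ and $H^0$. This matches $\brac{\partial\sigma}\neq 0$, which for $i=d$ is equivalent to $\sigma\notin P$.
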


Furthermore, in special cases there is a direct relationship between $V_{\sigma}$ and $V_{\sigma^{\bullet}}.$

\begin{Lemma}\label{lemma:vsigmadual}
Let $P$ be an $i$-dimensional percolation subcomplex of a cube $\Lambda$ and $\sigma$ be an $i$-cell of $\Lambda.$ If $q$ is prime, $i=0,$ or $i=d-1$ then  $P^{\bullet}\setminus \sigma^{\bullet}\in V_{\sigma^{\bullet}}^{\mathbf{w}}$ if and only if $P\notin V_{\sigma}.$ 
\end{Lemma}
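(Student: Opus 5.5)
The plan is to combine Corollary~\ref{cor:dualcomponent} with the Betti-number dichotomy of Lemma~\ref{lemma:addone}, applied once in the primal complex and once in the dual.

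By Corollary~\ref{cor:dualcomponent}, $P\notin V_{\sigma}$, i.e.\ $\brac{\partial\sigma}\neq 0$ in $H_{i-1}\paren{P;\Z_q}$, holds if and only if $\brac{f_{\sigma^{\bullet}}}\neq 0$ in $H^{d-i}\paren{P^{\ast};\Z_q}$. So it suffices to prove
\[
\brac{f_{\sigma^{\bullet}}}\neq 0 \text{ in } H^{d-i}\paren{P^{\ast};\Z_q}
\iff
\brac{\partial\sigma^{\bullet}}= 0 \text{ in } H_{d-i-1}\paren{P^{\ast}\setminus\sigma^{\bullet};\Z_q}.
\]
The right-hand side is $V^{\mathbf{w}}_{\sigma^{\bullet}}$ for $P^{\bullet}\setminus\sigma^{\bullet}$, since $P^{\ast}$ includes $\partial\Lambda^{\bullet}$.

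I will first assume $\sigma\notin P$, so that $\sigma^{\bullet}\in P^{\bullet}$. In the other case $P\in V_\sigma$ trivially, and $\sigma^\bullet\notin P^\bullet$; then I must show $P^\bullet\notin V^{\mathbf{w}}_{\sigma^\bullet}$. This follows by applying the first case to the dual configuration, using the symmetry $P^{\bullet\bullet}=P$ up to the boundary identification, so the first case is the main one.

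Set $Q=P^{\ast}\setminus\sigma^{\bullet}$, a $(d-i)$-dimensional percolation subcomplex with wired boundary, so that $Q\cup\sigma^{\bullet}=P^{\ast}$. The hypotheses ($q$ prime, $i=0$, or $i=d-1$) are exactly those under which Lemma~\ref{lemma:addone} applies in dimension $d-i$: the cases $i=0$ and $i=d-1$ become $d-i=d$ and $d-i=1$. The lemma then gives a dichotomy upon adding $\sigma^{\bullet}$ to $Q$:
\begin{itemize}[label={}]
\item either $Q\in V_{\sigma^{\bullet}}$ and $\beta_{d-i}$ increases by one,
\item or $Q\notin V_{\sigma^{\bullet}}$ and $\beta_{d-i}$ stays the same.
\end{itemize}
Because cohomology and homology have the same rank here, by the earlier lemma on Betti numbers, I will translate this into a cohomological statement. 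The restriction map $H^{d-i}\paren{P^{\ast}}\to H^{d-i}\paren{Q}$ is surjective, since no $(d-i+1)$-cells are involved. Its kernel is generated by $\brac{f_{\sigma^{\bullet}}}$, because $C^{d-i}\paren{P^{\ast}}=C^{d-i}\paren{Q}\oplus\langle f_{\sigma^{\bullet}}\rangle$ and the coboundaries agree. Hence $\brac{f_{\sigma^{\bullet}}}\neq0$ exactly when the rank jumps by one, which is exactly when $Q\in V_{\sigma^{\bullet}}$. This yields the displayed equivalence.

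The main obstacle is making the Betti-number and cohomology comparison valid in the non-prime cases $i=0$ and $i=d-1$, where "rank" must be read through the isomorphisms $H\cong\Z_q^{\beta}$ given for these degrees. In these cases I would argue directly: the kernel $\langle\brac{f_{\sigma^{\bullet}}}\rangle$ is cyclic, and the sizes satisfy $|H^{d-i}\paren{P^{\ast}}|=|H^{d-i}\paren{Q}|\cdot|\ker|$. So the kernel is either trivial or the group size multiplies by $q$, which matches the two alternatives of Lemma~\ref{lemma:addone}.

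A second point needing care is the wired boundary. I must check that Corollary~\ref{cor:dualcomponent}, phrased via $P^{\ast}$, and the event $V^{\mathbf{w}}$ both refer to the same complex $P^{\ast}$. This should hold by the deformation retraction noted before Theorem~\ref{thm:alexander}.
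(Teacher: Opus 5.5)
\textbf{The case $\sigma\in P$ fails, because what you claim there is false.} If $\sigma\in P$ then $P^{\bullet}\setminus\sigma^{\bullet}=P^{\bullet}$, and you assert $P^{\bullet}\notin V^{\mathbf{w}}_{\sigma^{\bullet}}$. Here is a counterexample. Take $i=d-1$, and let $P$ be the $(d-2)$-skeleton of $\Lambda$ together with a single $(d-1)$-cell $\sigma$. Then $P\in V_{\sigma}$. But $P^{\bullet}$ contains every dual bond except $\sigma^{\bullet}$, so the endpoints of $\sigma^{\bullet}$ are joined by going around a dual plaquette, and $P^{\bullet}\in V^{\mathbf{w}}_{\sigma^{\bullet}}$. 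The same works for any $1\le i\le d-1$, using the boundary of a $(d-i+1)$-cube of $\Lambda^{\bullet}$ that contains $\sigma^{\bullet}$.

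Your dualization argument does not give the claim either. The dual form of your first case, applied to $\sigma^{\bullet}\notin P^{\bullet}$, gives $P^{\bullet}\notin V^{\mathbf{w}}_{\sigma^{\bullet}}$ if and only if $P\setminus\sigma\in V_{\sigma}$. That is a genuine condition, not automatic.

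The printed statement is only correct with the membership condition built in, and that is what the paper's proof establishes. Its chain of equivalences ends with ``$\sigma^{\bullet}\in P^{\bullet},\ P^{\bullet}\setminus\sigma^{\bullet}\in V^{\mathbf{w}}_{\sigma^{\bullet}}$'', and Lemma~\ref{lemma:vsigmataudual} uses the lemma in that form. So drop your second case, note that $P\notin V_{\sigma}$ already forces $\sigma\notin P$, and prove
\[P\notin V_{\sigma}\iff \sigma^{\bullet}\in P^{\bullet}\ \text{and}\ P^{\bullet}\setminus\sigma^{\bullet}\in V^{\mathbf{w}}_{\sigma^{\bullet}}\,.\]

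With that correction, your main case is sound and takes a somewhat different route from the paper.

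\begin{itemize}
\item The paper applies Lemma~\ref{lemma:addone} to $P$ and $\sigma$, transfers both Betti-number conditions through $\beta_k\paren{P}=\beta_{d-k-1}\paren{P^{\ast}}$, and applies Lemma~\ref{lemma:addone} again on the dual side.
\item You use Corollary~\ref{cor:dualcomponent} to turn $\neg V_{\sigma}$ into $\brac{f_{\sigma^{\bullet}}}\neq 0$. You then identify that class with the kernel of restriction $H^{d-i}\paren{P^{\ast}}\to H^{d-i}\paren{Q}$, and invoke Lemma~\ref{lemma:addone} only once.
\end{itemize}

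Your route works directly with the dual class rather than tracking Betti numbers in two degrees. Your order-counting argument for non-prime $q$ is fine once the groups involved are known to be free over $\Z_q$.

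Two small fixes:
\begin{itemize}
\item Surjectivity and the kernel computation are not justified by ``no $(d-i+1)$-cells are involved'', since $P^{\ast}\supset\partial\Lambda^{\bullet}$ has cells up to dimension $d-1$. The correct reason is that $\sigma^{\bullet}\not\subset\partial\Lambda^{\bullet}$, so $\sigma^{\bullet}$ is not a face of any higher-dimensional cell of $P^{\ast}$.
\item For $i=0$ the dual dimension is $d$, which Lemma~\ref{lemma:addone} does not cover. The hypothesis is presumably meant to be $i=1$, as in Lemma~\ref{lemma:vsigmataudual}, which dualizes to dimension $d-1$.
\end{itemize}
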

\begin{proof}
Alexander duality implies that $\beta_{k}\paren{P}=\beta_{d-k-1}\paren{P^{\ast}}\,.$ Combining this with Lemma~\ref{lemma:addone} yields
\begin{align*}
P\notin V_{\sigma} &\iff \brac{\partial\sigma}\neq 0\in H_{i-1}\paren{P;\Z_q}, \brac{\partial\sigma}= 0\in H_{i-1}\paren{P\cup \sigma;\Z_q}\\
&\iff \beta_{i-1}\paren{P\cup \sigma}=\beta_{i-1}\paren{P}-1, \beta_i\paren{P\cup \sigma}=\beta_i\paren{P}\\
&\iff \beta_{d-i}\paren{P^{\ast}\setminus \sigma^{\bullet}}=\beta_{d-i}\paren{P^{\ast}}-1, \beta_{d-i-1}\paren{P^{\ast}\setminus \sigma^{\bullet}}=\beta_{d-i-1}\paren{P^{\ast}}\\
&\iff \sigma^{\bullet}\in P^{\bullet},P^{\bullet}\setminus \sigma^{\bullet}\in V_{\sigma^{\bullet}}^{\mathbf{w}}\,.
\end{align*}
\end{proof}
While the forward implication holds for non-prime $q,$ the proof is longer so we omit it. The converse fails if $q$ is non-prime. For example, if $q=4$ and $\brac{\partial\sigma}\neq 0$ but $2\brac{\partial\sigma}=0$ in $\tilde{H}_1\paren{P;\Z_4}$
then $P\notin V_{\sigma}$ and  $P^{\bullet}\setminus \sigma^{\bullet}\notin V_{\sigma^{\bullet}}^{\mathbf{w}}.$ A specific configuration can be found by embedding the example in Figure~1 of~\cite{duncan2025sharp} in a hyperplane in $\Z^4$ and adding a tube between the boundary of a single plaquette $\sigma$ and $\gamma$  which intersects the hyperplane only in $\gamma.$ This subcomplex of $\Z^4$ is homeomorphic to a Klein bottle with a disk removed. Note that there is no subcomplex $P$ of $\Z^3$ with a plaquette $\sigma$ so that  $\brac{\partial\sigma}\neq 0$ but $2\brac{\partial\sigma}=0$ in $\tilde{H}_1\paren{P;\Z_4}$, for that would imply the existence of a dual path of bonds whose linking number with $\partial\sigma$ is $2$ but the non-existence of a path whose linking number with $\partial\sigma$ is $1.$ 

The previous result can also be applied to $V_{\sigma,\tau}.$

\begin{figure}[ht]
\center
\includegraphics[width=0.6\textwidth]{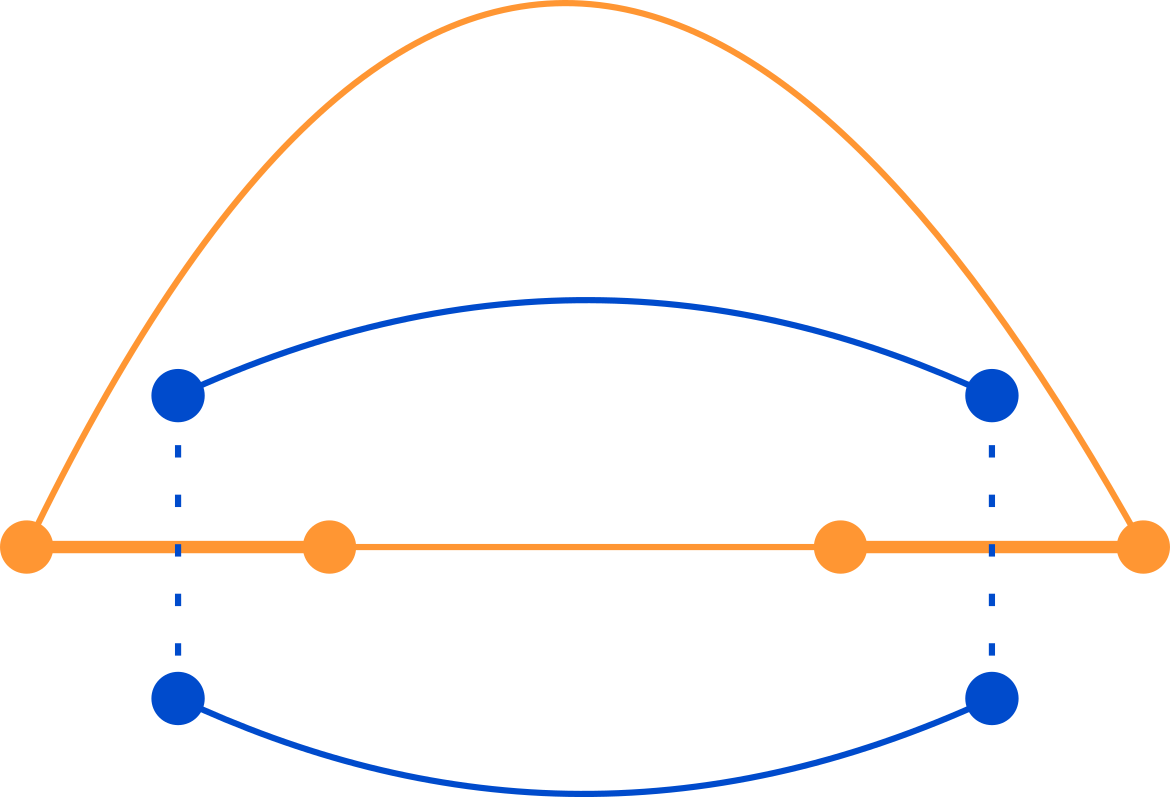}
\caption{When $i=1$ and $e_1=\paren{v_1,w_1}$ and $e_2=\paren{v_2,w_2}$ are edges, the event $V_{e_1,e_2}$ is the event that $v_1$ and $w_1$ are each connected to one of the vertices of $e_2$ but that $v_1$ is not itself connected to $w_1.$ An example of this event is shown in blue, with the dotted blue lines representing $e_1$ and $e_2.$ The dual event  $e_1^{\bullet}\in P^{\bullet},e_2^{\bullet}\in P^{\bullet}, P^{\bullet}\setminus \set{e_1^{\bullet},e_2^{\bullet}}\in V_{e_1^{\bullet},e_2^{\bullet}}^{\mathbf{w}}$ is illustrated in orange.} 
\label{fig:vsigmataudual}
\end{figure}

\begin{Lemma}\label{lemma:vsigmataudual}
Let $P$ be a percolation subcomplex of a cube $\Lambda$ and let $\sigma,\tau $ be $i$-cells of $\Lambda.$ 
If $q$ is prime, $i=1,$ or $i=d-1$ then 
$$P\in V_{\sigma,\tau}\iff \sigma^{\bullet}\in P^{\bullet},\tau^{\bullet}\in P^{\bullet}, P^{\bullet}\setminus \set{\sigma^{\bullet},\tau^{\bullet}}\in V_{\sigma^{\bullet},\tau^{\bullet}}^{\mathbf{w}}\,.$$
\end{Lemma}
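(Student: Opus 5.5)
We reduce the statement to Lemma~\ref{lemma:vsigmadual} and to dimension counting via Lemma~\ref{lemma:addone}, exactly as in that proof, now applied twice. First we unpack $V_{\sigma,\tau}$ in rank language. For $q$ prime (and similarly in the cases $i=1$, $i=d-1$, where the relevant groups are free $\Z_q$-modules of rank given by Betti numbers), the event $[\partial\sigma]=[\partial\tau]\neq 0$ in $H_{i-1}(P;\Z_q)$ is equivalent to the conjunction of three conditions. First, $P\notin V_\sigma$. Second, $P\notin V_\tau$. Third, the class $[\partial\sigma]$ dies when $\tau$ is added, i.e.\ $P\cup\tau\in V_\sigma$.

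When $q$ is non-prime the last condition should be read as $[\partial\sigma]\in\langle[\partial\tau]\rangle$. So in the prime case we first observe that $[\partial\sigma]=c[\partial\tau]$ for a unit $c$ is what is being detected; the orientation and scalar can be absorbed, and we will check that the equality class condition matches. In Betti number language these three conditions say the following. Adding $\sigma$ to $P$ lowers $\beta_{i-1}$, and so does adding $\tau$. Adding both lowers $\beta_{i-1}$ by exactly one, so that $\beta_i(P\cup\sigma\cup\tau)=\beta_i(P)+1$.

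Next we transport each condition through Alexander duality, $\beta_k(P)=\beta_{d-k-1}(P^\ast)$, as in the proof of Lemma~\ref{lemma:vsigmadual}. Removing $\sigma^\bullet$ from $P^\ast$ mirrors adding $\sigma$ to $P$, and removing $\tau^\bullet$ mirrors adding $\tau$. The conditions $P\notin V_\sigma$ and $P\notin V_\tau$ force $\sigma^\bullet,\tau^\bullet\in P^\bullet$, since $\sigma,\tau\notin P$. Write $Q=P^\bullet\setminus\{\sigma^\bullet,\tau^\bullet\}$. Adding both $\sigma$ and $\tau$ raises $\beta_i(P)$ by exactly one and lowers $\beta_{i-1}(P)$ by exactly one. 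Dually, passing from $Q$ to $Q\cup\sigma^\bullet\cup\tau^\bullet$ lowers $\beta_{d-i-1}$ by exactly one and raises $\beta_{d-i}$ by exactly one, computed in the wired complex $Q\cup\partial\Lambda^\bullet$. Adding $\sigma$ alone lowers $\beta_{i-1}(P)$. Dually, adding $\sigma^\bullet$ alone to $Q\cup\tau^\bullet$ keeps $\beta_{d-i-1}$ fixed and raises $\beta_{d-i}$, which by Lemma~\ref{lemma:addone} means $Q\cup\tau^\bullet\in V^{\mathbf w}_{\sigma^\bullet}$. Symmetrically, $Q\cup\sigma^\bullet\in V^{\mathbf w}_{\tau^\bullet}$.

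We then use the four-case bookkeeping of Lemma~\ref{lemma:addone} to read off the dual event. Adding $\sigma^\bullet$ then $\tau^\bullet$ to $Q$ changes $\beta_{d-i-1}$ by a total of $-1$. The second step (adding $\tau^\bullet$ to $Q\cup\sigma^\bullet$) raises $\beta_{d-i}$ and leaves $\beta_{d-i-1}$ unchanged. Hence the first step must lower $\beta_{d-i-1}$, so $Q\notin V^{\mathbf w}_{\sigma^\bullet}$; by symmetry $Q\notin V^{\mathbf w}_{\tau^\bullet}$. Together with $Q\cup\tau^\bullet\in V^{\mathbf w}_{\sigma^\bullet}$, this is precisely the rank characterization of $Q\in V^{\mathbf w}_{\sigma^\bullet,\tau^\bullet}$ from the first step, now applied to the dual wired complex. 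Every implication above is an equivalence of Betti-number identities, so running the argument backwards gives the converse.

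The main obstacle is the first step: showing rigorously that $V_{\sigma,\tau}$ is equivalent to the stated rank conditions. The issue is that the rank conditions only give that $[\partial\sigma]$ and $[\partial\tau]$ span the same line. Over $\Z_q$ with $q$ prime we would therefore get $[\partial\sigma]=c[\partial\tau]$, not necessarily with $c=1$. We would handle this by checking that the dual event is defined with the same looseness, or else by tracking the scalar explicitly through the Alexander duality isomorphism $\mathcal I$ of Theorem~\ref{thm:alexander}. That isomorphism sends $f_{\sigma^\bullet}$ to $\pm[\partial\sigma]$, so it carries the linear relation $[\partial\sigma]=c[\partial\tau]$ to the corresponding relation between the cohomology classes $[f_{\sigma^\bullet}]$ and $[f_{\tau^\bullet}]$. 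For $i=1$ or $i=d-1$ the classes are connectivity indicators, so $c=\pm1$ is automatic once orientations are fixed, and that case needs no extra argument.
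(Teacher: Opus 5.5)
Your route is the paper's: it writes $V_{\sigma,\tau}$ as the four conditions $P\notin V_\sigma$, $P\notin V_\tau$, $P\cup\tau\in V_\sigma$, $P\cup\sigma\in V_\tau$ and dualizes each one with Lemma~\ref{lemma:vsigmadual}. Your three conditions suffice, since the fourth follows once the scalar below is a unit. Your Betti-number bookkeeping is just that lemma's proof applied twice.

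However, the step you yourself flag is a genuine gap, and your sketch does not close it. The paper's one-line proof makes the same identification without comment, so you have not missed an idea from it.

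\emph{Why the gap is real.} The rank conditions only give $[\partial\sigma]=c[\partial\tau]\neq 0$ for some unit $c$. For prime $q\geq 5$ with $2\le i\le d-2$, this scalar cannot be absorbed into orientations. Take a pair of pants and glue its two legs along $\partial\tau$ with matching boundary orientations; realize the result as a plaquette surface in $\Z^4$, homeomorphic to a punctured Klein bottle with boundary $\partial\sigma$. It has $[\partial\sigma]=\pm 2[\partial\tau]\neq 0$ in $H_1(P;\Z_5)$. All your rank conditions hold, yet $V_{\sigma,\tau}$ fails for either orientation of $\tau$.

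\emph{Why your proposed fixes do not work.} Declaring the dual event ``equally loose'' is not an option, since $V^{\mathbf w}_{\sigma^\bullet,\tau^\bullet}$ is defined by an exact equality. Pushing the relation through $\mathcal I$ only yields $[f_{\sigma^\bullet}]=c[f_{\tau^\bullet}]$ in $H^{d-i}(P^\bullet;\Z_q)$. That is a statement about cocycles, not about the classes $[\partial\sigma^\bullet],[\partial\tau^\bullet]\in H_{d-i-1}(Q\cup\partial\Lambda^\bullet;\Z_q)$ that the lemma concerns.

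The missing step is a pairing between the cocycle relation and a dual cycle.
\begin{itemize}
\item Orient $\sigma^\bullet,\tau^\bullet$ so that $\ast f_{\sigma^\bullet}=\sigma$ and $\ast f_{\tau^\bullet}=\tau$.
\item Your dual condition $Q\cup\tau^\bullet\in V^{\mathbf w}_{\sigma^\bullet}$ gives some $c'\in\Z_q$ and a chain $b$ supported on $Q\cup\partial\Lambda^\bullet$ with $\partial\sigma^\bullet-c'\partial\tau^\bullet=\partial b$.
\item So $z=\sigma^\bullet-c'\tau^\bullet-b$ is a $(d-i)$-cycle of $P^\bullet$, with coefficient $1$ on $\sigma^\bullet$ and $-c'$ on $\tau^\bullet$.
\item Since $f_{\sigma^\bullet}-cf_{\tau^\bullet}=\delta_{P^\bullet}g$ for some $g$, it vanishes on $z$. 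This gives $1+cc'=0$, i.e.\ $c'=-c^{-1}$.
\end{itemize}
Hence $c=1$ exactly when $c'=-1$. Running the same computation from the dual side gives the converse. The lemma therefore holds exactly once one of $\sigma^\bullet,\tau^\bullet$ is given the reversed orientation, a convention the paper leaves implicit.

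For prime $q$ you can skip the Betti numbers entirely. Both $V_{\sigma,\tau}$ and the dual event for the pair $\sigma^\bullet,-\tau^\bullet$ say precisely that $\set{(z_{\sigma^\bullet},z_{\tau^\bullet}) : z\in Z_{d-i}(P^\bullet;\Z_q)}$ is the line in $\Z_q^2$ spanned by $(1,1)$.
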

\begin{proof}
The event $V_{\sigma,\tau}$ is equivalent to $P\notin V_{\sigma},P\notin V_{\tau}, P\cup\tau\in V_{\sigma}, P\cup \sigma \in V_{\tau}.$ Applying the previous lemma to each of these four events yields the desired result. 
\end{proof}

\section{Topological Formulas}
\label{sec:formulas}
We begin by proving Theorem~\ref{thm:correlation}.

\begin{proof}[Proof of Theorem~\ref{thm:correlation}]
We prove the statement for free boundary conditions, since the proof does not depend on the dimension or boundary conditions.
Using Proposition~\ref{thm:comparisongeneral},
\begin{align*}
\mathbb{E}_{\nu}\paren{W_{\gamma}W_{\gamma'}^{-1}}&=\mathbb{E}_{\nu}\paren{f\paren{\gamma-\gamma'}}\\
&=\mu\paren{\brac{\gamma}=\brac{\gamma'}\in H_i\paren{P;\Z_q}}\\
&=\mu\paren{\brac{\gamma}=\brac{\gamma'}\neq 0}+\mu\paren{\brac{\gamma}=\brac{\gamma'}= 0}\\
&=\mu\paren{V_{\gamma_1,\gamma_2}}+\mu\paren{V_{\gamma}\cap V_{\gamma}'}\,.
\end{align*}
Thus
\begin{align*}
\mathrm{Cov}_{\nu}\paren{W_{\gamma},W_{\gamma'}^{-1}}&=\mathbb{E}_{\nu}\paren{W_{\gamma}W_{\gamma'}^{-1}}-\mathbb{E}_{\nu}\paren{W_{\gamma}}\mathbb{E}_{\nu}\paren{W_{\gamma'}^{-1}}\\
&=\mu\paren{V_{\gamma_1,\gamma_2}}+\mu\paren{V_{\gamma}\cap V_{\gamma}'}-\mu\paren{V_{\gamma}}\mu\paren{V_{\gamma'}}\\
&= \mu\paren{V_{\gamma,\gamma'}}+\mathrm{Cov}_{\mu}\paren{V_{\gamma},V_{\gamma'}}\,.
\end{align*}
\end{proof}

There is a simpler analogue of Theorem~\ref{thm:correlation} when the two loops are boundaries of single plaquettes and $q$ is prime.  Observe that the second term vanishes when $q=2.$ 

\begin{Proposition}\label{prop:correlation_alternate}
Let $\sigma,\tau$ be $i$-plaquettes and let $q$ be prime. Then    
$$\mathrm{Cov}_{\nu}\paren{W_{\sigma},W_{\tau}^{-1}}= \frac{q^2}{p^2\paren{q-1}^2}\mathrm{Cov}_{\mu}\paren{\sigma\in P,\tau\in P}+\frac{q-2}{q-1}\mu\paren{V_{\sigma,\tau}}\,.$$
\end{Proposition}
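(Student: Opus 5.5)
The plan is to compute both sides from the coupling of Proposition~\ref{prop:couple} and the comparison formula Theorem~\ref{thm:comparisongeneral}, using Fourier analysis on $\Z_q$ to express plaquette occupation in terms of Wilson loop variables.

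\textbf{Step 1: one-point function.} By the coupling, conditional on $f$, each plaquette $\sigma$ is open independently with probability $p\,I_{\delta f(\sigma)=1}$. Writing the indicator as a character average,
$$I_{\delta f(\sigma)=1}=\frac1q\sum_{j\in\Z_q}W_\sigma^j,$$
we get
$$\mu(\sigma\in P)=\frac pq\sum_{j}\mathbb{E}_\nu(W_\sigma^j).$$
Since $W_\sigma^j=W_{j\partial\sigma}$, Theorem~\ref{thm:comparisongeneral} gives $\mathbb{E}_\nu(W_\sigma^j)=\mu([j\partial\sigma]=0)$. Because $q$ is prime, $j$ is invertible for $j\neq0$, so this probability equals $\mu(V_\sigma)$. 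Hence
$$\mu(\sigma\in P)=\frac pq\bigl(1+(q-1)\mu(V_\sigma)\bigr).$$

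\textbf{Step 2: two-point function.} The same expansion with conditional independence gives
$$\mu(\sigma,\tau\in P)=\frac{p^2}{q^2}\sum_{j,k}\mu\bigl([j\partial\sigma+k\partial\tau]=0\bigr).$$
I will split the sum as follows.
\begin{itemize}
\item For $j=k=0$ the term is $1$.
\item For exactly one of $j,k$ nonzero the terms give $(q-1)\mu(V_\sigma)$ and $(q-1)\mu(V_\tau)$.
\item For $j,k\neq0$, the event $[j\partial\sigma+k\partial\tau]=0$ is the disjoint union of $V_\sigma\cap V_\tau$ and the event $E_c=\{[\partial\sigma]=c[\partial\tau]\neq0\}$ with $c=-k/j$. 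Each $c\in\Z_q^\times$ arises from exactly $q-1$ pairs $(j,k)$.
\end{itemize}
Subtracting the product of the one-point functions, the constant and single-index terms cancel, leaving
$$\frac{q^2}{p^2(q-1)^2}\mathrm{Cov}_\mu(\sigma\in P,\tau\in P)=\mathrm{Cov}_\mu(V_\sigma,V_\tau)+\frac1{q-1}\sum_{c\neq0}\mu(E_c).$$

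\textbf{Step 3: comparison with Theorem~\ref{thm:correlation}.} Theorem~\ref{thm:correlation} gives
$$\mathrm{Cov}_\nu(W_\sigma,W_\tau^{-1})=\mu(V_{\sigma,\tau})+\mathrm{Cov}_\mu(V_\sigma,V_\tau).$$
Substituting Step 2, the claimed identity reduces to the purely topological statement
$$\sum_{c\neq0}\mu(E_c)=\mu(E_1)=\mu(V_{\sigma,\tau}),$$
that is, $\mu(E_c)=0$ for $c\neq1$ once orientations are fixed appropriately. For $q=2$ this is automatic.

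\textbf{Main obstacle.} Step~3 is the step I expect to be hard. For general prime $q$ I would try to prove it using the fact that $\sigma,\tau$ are single plaquettes, via Corollary~\ref{cor:dualcomponent} and the dual description of Lemma~\ref{lemma:vsigmataudual}. The idea is to represent the classes $[\partial\sigma]$ and $[\partial\tau]$ through Alexander duality by dual cocycles, and then show that their proportionality constant is forced to be $\pm1$, determined by orientation.

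I am not confident this holds for arbitrary configurations, because the rank conditions in Lemma~\ref{lemma:vsigmataudual} do not distinguish the value of $c$. If the claim fails, the fallback is to read $V_{\sigma,\tau}$ in the proposition as the \emph{proportionality} event $\bigcup_{c\ne0}E_c$ (the event characterized in Lemma~\ref{lemma:vsigmataudual}), and to track the resulting coefficients carefully. This is the point where the bookkeeping could change the constant $\frac{q-2}{q-1}$.
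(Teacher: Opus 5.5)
Your Steps 1 and 2 are correct, and your Step 3 reduction is exactly right. Combining Step 2 with Theorem~\ref{thm:correlation} shows that, for every prime $q$,
\[
\text{(right-hand side)}-\text{(left-hand side)}=\frac{1}{q-1}\sum_{c\neq 0,1}\mu(E_c).
\]
The gap is that this quantity is not zero when $q$ is odd, so Step 3 cannot be completed.

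For $i=1$ with $e_1=(a,b)$ and $e_2=(c,d)$, the event $E_{-1}$ is $\{a\not\leftrightarrow b,\ b\leftrightarrow c,\ a\leftrightarrow d\}$. It has positive probability by finite energy, and it is disjoint from $E_1$ because $2\neq 0$ in $\Z_q$.

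For $i=2$, take $\sigma,\tau$ parallel, equally oriented and stacked vertically. The lateral surface of the column between them realizes $E_1$. The annulus $\partial R\setminus\{\sigma,\tau\}$, for a hook-shaped polycube $R$ sitting directly above both plaquettes, realizes $E_{-1}$. So in $\Z^d$ both events typically have positive probability, and reorienting $\tau$ does not help.

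As a concrete check, take the random cluster model on a single unit square $a\to b\to c\to d\to a$ with $q=3$ and $p=1/2$. There $V_{e_1,e_2}=E_1=\varnothing$, and one finds $\mathrm{Cov}_\nu(W_{e_1},W_{e_2}^{-1})=159/7396$. The right-hand side of the proposition equals $288/7396$, and the difference is $\frac12\mu(E_{-1})=3/172$.

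Your fallback does not rescue the constant either. Lemma~\ref{lemma:vsigmataudual} and Figure~\ref{fig:vsigmataudual} do describe the proportionality event $E=\bigcup_{c\neq0}E_c$. But Theorem~\ref{thm:correlation} genuinely involves $E_1$. With that reading you get $\mu(E_1)-\frac{1}{q-1}\mu(E)$ in place of $\frac{q-2}{q-1}\mu(E)$, which again requires $\mu(E)=\mu(E_1)$.

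The paper's proof conditions on $P$ instead of Fourier-expanding, but is otherwise the same computation, and it passes over the same point. It assigns $\nu(W_\sigma=W_\tau=1\mid P)=1/q^2$ on $\neg V_\sigma\cap\neg V_\tau\cap\neg V_{\sigma,\tau}$. On $E_{-1}$, however, every cocycle of $P$ satisfies $W_\sigma=W_\tau^{-1}$, so the conditional probability there is $1/q$.

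Hence the proposition as stated holds for $q=2$, where $E_{-1}=E_1$, or under the extra hypothesis $\mu(E_c)=0$ for $c\neq0,1$. For general prime $q$, what your Steps 1--2 together with Theorem~\ref{thm:correlation} actually prove is
\[
\mathrm{Cov}_\nu(W_\sigma,W_\tau^{-1})=\frac{q^2}{p^2(q-1)^2}\mathrm{Cov}_\mu(\sigma\in P,\tau\in P)+\frac{q-2}{q-1}\mu(E_1)-\frac{1}{q-1}\sum_{c\neq0,1}\mu(E_c).
\]
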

\begin{proof}
We have that
\begin{align*}
\mu\paren{\sigma\in P}&=K\paren{\sigma\in P,W_{\sigma}=1}\\
&=p\nu\paren{W_{\sigma}=1}\\
&=p\mu\paren{V_{\sigma}}+\frac{p}{q}\mu\paren{\neg V_{\sigma}}
\end{align*}
since on the event $\neg V_{\sigma},$ $f\paren{\partial\sigma}$ is distributed uniformly on $\Z\paren{q}$ (see Proposition 28 of~\cite{duncan2025topological}). 

Similarly,
\begin{align*}
\frac{1}{p^2}\mu\paren{\sigma\in P,\tau\in P}&=\frac{1}{p^2}K\paren{\sigma\in P,\tau\in P, W_{\sigma}=W_{\tau}=1}\\
&=\nu\paren{W_{\sigma}=W_{\tau}=1}\\
&=\mu\paren{V_{\sigma},V_{\tau}}+\frac{1}{q^2}\mu\paren{\neg V_{\sigma},\neg V_{\tau},\neg V_{\sigma,\tau}}\\
&\;\;\;\;+\frac{1}{q}\paren{\mu\paren{V_{\sigma,\tau}}+\mu\paren{V_{\sigma},\neg V_{\tau}}+\mu\paren{\neg V_{\sigma},V_{\tau}}}\\
&=\mu\paren{V_{\sigma},V_{\tau}}+\frac{1}{q}\mu\paren{V_{\sigma},\neg V_{\tau}}+\frac{1}{q}\mu\paren{\neg V_{\sigma},V_{\tau}}\\
&\;\;\;\;+\frac{1}{q^2}\mu\paren{\neg V_{\sigma},\neg V_{\tau}}+\frac{q-1}{q^2}\mu\paren{V_{\sigma,\tau}}\,.\\
\end{align*}

Thus
\begin{align*}
\mathrm{Cov}_{\mu}\paren{\sigma\in P,\tau\in P}&=\mu\paren{\sigma\in P,\tau\in P}-\mu\paren{\sigma\in P}\mu\paren{\tau\in P}\\
&=\frac{p^2\paren{q-1}}{q^2}\mu\paren{V_{\sigma,\tau}}+p^2\mathrm{Cov}_{\mu}\paren{V_{\sigma},V_{\tau}}\\
&\;\;\;\;\;+\frac{p^2}{q}\mathrm{Cov}_{\mu}\paren{V_{\sigma},\neg V_{\tau}}+\frac{p^2}{q}\mathrm{Cov}_{\mu}\paren{\neg V_{\sigma},V_{\tau}}\\
&\;\;\;\;\;+\frac{p^2}{q^2}\mathrm{Cov}_{\mu}\paren{\neg V_{\sigma},\neg V_{\tau}}\\
&=\frac{p^2\paren{q-1}^2}{q^2}\mathrm{Cov}_{\mu}\paren{V_{\sigma},V_{\tau}}+\frac{p^2\paren{q-1}}{q^2}\mu\paren{V_{\sigma,\tau}}\\
&=\frac{p^2\paren{q-1}^2}{q^2}\mathrm{Cov}_{\mu}\paren{W_{\sigma},W_{\tau}^{-1}}-\frac{p^2\paren{q-1}\paren{q-2}}{q^2}\mu\paren{V_{\sigma,\tau}}\,.\
\end{align*}
\end{proof}

This expression is obviously self-dual when $q=2.$ We show that it is ``approximately'' self-dual more generally. 

\begin{Corollary}\label{cor:vdoubledual}
If $q$ is prime then
$$\mu\paren{V_{\sigma,\tau}}=\frac{\paren{p^*}^2}{\paren{1-p^*}\paren{q-p^*}}{\mu^{\bullet}\paren{V_{\sigma^{\bullet},\tau^{\bullet}}^{\mathbf{w}}}}\,.$$
\end{Corollary}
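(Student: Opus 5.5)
The plan is to transfer the event $V_{\sigma,\tau}$ to the dual model with Lemma~\ref{lemma:vsigmataudual}, and then use Theorem~\ref{thm:duality} together with a local computation of the dual weights to extract the constant. By Lemma~\ref{lemma:vsigmataudual}, since $q$ is prime, $P\in V_{\sigma,\tau}$ exactly when $\sigma^{\bullet},\tau^{\bullet}\in P^{\bullet}$ and $Q\coloneqq P^{\bullet}\setminus\set{\sigma^{\bullet},\tau^{\bullet}}\in V^{\mathbf{w}}_{\sigma^{\bullet},\tau^{\bullet}}$. Theorem~\ref{thm:duality} gives $\mu(P)=\mu^{\bullet}(P^{\bullet})$, where $\mu^{\bullet}$ is the wired dual PRCM with parameter $p^*$. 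Hence
\[\mu\paren{V_{\sigma,\tau}}=\mu^{\bullet}\paren{\sigma^{\bullet},\tau^{\bullet}\in P^{\bullet},\,Q\in V^{\mathbf{w}}_{\sigma^{\bullet},\tau^{\bullet}}}.\]
I will work on a finite box and pass to $\Z^d$ by weak limits at the end.

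Next I compare this with $\mu^{\bullet}(V^{\mathbf{w}}_{\sigma^{\bullet},\tau^{\bullet}})$ for the full configuration. If $\sigma^{\bullet}$ (say) were open, then $\partial\sigma^{\bullet}$ would bound $\sigma^{\bullet}$, so $V^{\mathbf{w}}_{\sigma^{\bullet},\tau^{\bullet}}$ would fail. Thus $V^{\mathbf{w}}_{\sigma^{\bullet},\tau^{\bullet}}$ for the full configuration coincides with the event that both dual cells are closed and the remaining configuration $Q$ lies in $V^{\mathbf{w}}_{\sigma^{\bullet},\tau^{\bullet}}$. Both sides are therefore the same event on $Q$, which is measurable with respect to the cells other than $\sigma^{\bullet},\tau^{\bullet}$. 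They differ only in the state of the two cells. So it suffices to compute, for each fixed $Q\in V^{\mathbf{w}}_{\sigma^{\bullet},\tau^{\bullet}}$, the ratio of the weights $w(Q\cup\set{\sigma^{\bullet},\tau^{\bullet}})$ and $w(Q)$, and then sum over $Q$.

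The weight is $(p^*)^{|P|}(1-p^*)^{\cdot}\,q^{\beta}$ for the relevant wired Betti number $\beta$. This uses the lemma that cohomology is $\Z_q^{\beta}$ for prime $q$, and Alexander duality (Theorem~\ref{thm:alexander}) to pass between homology and cohomology. By Lemma~\ref{lemma:addone}, adding $\sigma^{\bullet}$ to $Q$ when $Q\notin V_{\sigma^{\bullet}}$ lowers the relevant Betti number by one, contributing a factor $p^*/((1-p^*)q)$. In $Q\cup\sigma^{\bullet}$ we have $\brac{\partial\tau^{\bullet}}=\brac{\partial\sigma^{\bullet}}=0$, so subsequently adding $\tau^{\bullet}$ contributes $p^*/(1-p^*)$. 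This gives a ratio of the form $(p^*)^2/((1-p^*)^2q)$ pointwise in $Q$, and summing over $Q$ yields the identity with an explicit constant.

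The main obstacle is making the constant match exactly $\frac{(p^*)^2}{(1-p^*)(q-p^*)}$. The naive computation above gives $\frac{(p^*)^2}{(1-p^*)^2q}$, so I expect the discrepancy to come from one of two places. One is which Betti number governs the wired weight, and how adding a cell interacts with the boundary $\partial\Lambda^{\bullet}$. The other is the normalization implicit in the event on the dual side, for instance if $V^{\mathbf{w}}$ is intended with one of the cells present, or conditioned via the single-cell probabilities from Proposition~\ref{prop:correlation_alternate}. To resolve it, I would redo the bookkeeping over all four states of $(\sigma^{\bullet},\tau^{\bullet})$ given $Q$. The normalizing sum is
\[1+\frac{2p^*}{(1-p^*)q}+\frac{(p^*)^2}{(1-p^*)^2q},\]
and I would check this carefully against the conventions of Theorem~\ref{thm:duality} and against the identity $\mu(\sigma\in P)=p\mu(V_\sigma)+\frac pq\mu(\neg V_\sigma)$ established in the proof of Proposition~\ref{prop:correlation_alternate}. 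Finally, I would pass to infinite volume using that both events are local limits of increasing-box events.
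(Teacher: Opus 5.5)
Your weight-ratio computation is correct, and so is the constant it produces. What is missing is only the conclusion: the constant displayed in the statement is wrong, not your bookkeeping. Fix $Q\in U$, where $U$ is the event that $[\partial\sigma^{\bullet}]=[\partial\tau^{\bullet}]\neq 0$ in $H_{d-i-1}(Q\cup\partial\Lambda^{\bullet};\Z_q)$. The four states of $(\sigma^{\bullet},\tau^{\bullet})$ then carry relative weights $1,a,a,ab$, with $a=\frac{p^*}{q(1-p^*)}$ and $b=\frac{p^*}{1-p^*}$. Lemma~\ref{lemma:addone} applies verbatim to $Q\cup\partial\Lambda^{\bullet}$, so the wired boundary changes nothing. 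Also, $V^{\mathbf{w}}_{\sigma^{\bullet},\tau^{\bullet}}$ for the full configuration is exactly $\{\sigma^{\bullet},\tau^{\bullet}\notin P^{\bullet}\}\cap U$, as you argued. Since $ab$ does not depend on $Q$, summing gives
\[\mu(V_{\sigma,\tau})=\frac{(p^*)^2}{q(1-p^*)^2}\,\mu^{\bullet}(V^{\mathbf{w}}_{\sigma^{\bullet},\tau^{\bullet}})=\frac{q(1-p)^2}{p^2}\,\mu^{\bullet}(V^{\mathbf{w}}_{\sigma^{\bullet},\tau^{\bullet}}).\]
This is the correct identity. It differs from $\frac{(p^*)^2}{(1-p^*)(q-p^*)}$ for every $p^*\in(0,1)$.

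The paper's proof does the same local resampling of $\sigma^{\bullet},\tau^{\bullet}$ on $U$, but through conditional probabilities. It uses $\mu^{\bullet}(\sigma^{\bullet}\in P^{\bullet}\mid\tau^{\bullet}\notin P^{\bullet},U)=p^*/q$. From the odds $a$, the correct value is $\frac{a}{1+a}=\frac{p^*}{p^*+q(1-p^*)}$, and substituting it into the paper's chain returns exactly your $ab$. The value $p^*/q$ comes from reading $\mu(\sigma\in P)=p\,\mu(V_\sigma)+\frac{p}{q}\mu(\neg V_\sigma)$ as a statement about the law of $\sigma$ given the other cells. It is not: there $V_\sigma$ refers to the whole configuration, and $\neg V_\sigma$ already forces $\sigma\notin P$. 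So the cross-check you planned against that identity must not be used to ``repair'' your constant.

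An independent primal check settles it. By Theorem~\ref{thm:duality} and Lemma~\ref{lemma:vsigmataudual}, $\mu^{\bullet}(V^{\mathbf{w}}_{\sigma^{\bullet},\tau^{\bullet}})=\mu(\sigma,\tau\in P,\ R\in V_{\sigma,\tau})$, where $R$ is the configuration off $\{\sigma,\tau\}$. For each such $R$, viewed as having $\sigma,\tau$ closed, $\mu(R)/\mu(R\cup\{\sigma,\tau\})=q(1-p)^2/p^2$. At $q=2$, $p=1/2$, $p^*=2/3$ this ratio is $2$, whereas the stated formula gives $1$. Your constant also equals $1$ exactly at the self-dual point; the stated one does not.

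The slip is harmless for Theorem~\ref{thm:massgapdual}, which only needs a positive constant independent of $N$. One smaller point: $V_{\sigma,\tau}$ is neither monotone nor local, so your last sentence does not justify passing to $\Z^d$. The identity is proved on a box, as Lemma~\ref{lemma:vsigmataudual} is stated, and infinite volume needs a separate argument.
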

\begin{proof}
For convenience, let $U$ denote the event $P^{\bullet}\setminus \set{\sigma^{\bullet},\tau^{\bullet}}\in V^{\mathbf{w}}_{\sigma^{\bullet},\tau^{\bullet}}.$ Observe that on this event, the addition of one of $\sigma^{\bullet},\tau^{\bullet}$  reduces the rank of the $(d-i-1)$-cohomology by one, whereas the addition of the second cell instead increases the rank of the $(d-i)$-cohomology by one and leaves the $(d-i-1)$-cohomology unchanged.

As such, we can compute
\begin{align*}
\mu^{\bullet}\paren{\sigma^{\bullet}\in P^{\bullet}\mid U}&=\mu^{\bullet}\paren{\sigma^{\bullet}\in P^{\bullet}\mid \tau^{\bullet}\in P^{\bullet}, U}\mu^{\bullet}\paren{\tau^{\bullet}\in P^{\bullet}\mid U}\\
&\;\;+\mu^{\bullet}\paren{\sigma^{\bullet}\in P^{\bullet}\mid \tau^{\bullet}\notin P^{\bullet}, U}\mu^{\bullet}\paren{\tau^{\bullet}\notin P^{\bullet}\mid U}\\
&=p^* \mu^{\bullet}\paren{\tau^{\bullet}\in P^{\bullet}\mid U} +\frac{p^*}{q}\mu^{\bullet}\paren{\tau^{\bullet}\notin P^{\bullet}\mid U}\\
&=p^* \mu^{\bullet}\paren{\sigma^{\bullet}\in P^{\bullet}\mid U} +\frac{p^*}{q}\paren{1-\mu^{\bullet}\paren{\sigma^{\bullet}\in P^{\bullet}\mid U}}\\
&\implies \mu^{\bullet}\paren{\sigma^{\bullet}\in P^{\bullet}\mid U}=\frac{p^*}{q+p^*-p^*q}\,.
\end{align*}

It follows that

\begin{align*}
\mu^{\bullet}\paren{V_{\sigma^{\bullet},\tau^{\bullet}}^{\mathbf{w}}}&=\mu^{\bullet}\paren{\sigma^{\bullet}\notin P^{\bullet}, \tau^{\bullet}\notin P^{\bullet},U}\\
&=\mu^{\bullet}\paren{\sigma^{\bullet}\notin P^{\bullet}\mid \tau^{\bullet}\notin P^{\bullet},U}\mu^{\bullet}\paren{\tau^{\bullet}\notin P^{\bullet}\mid U}\mu^{\bullet}\paren{U}\\
&=\paren{1-\frac{p^*}{q}}\paren{1-\frac{p^*}{q+p^*-p^*q}}\mu^{\bullet}\paren{U}\\
&=\frac{(1-p^*) (q-p^*)}{q+p^*-p^*q}\mu^{\bullet}\paren{U}
\end{align*}

and, by Lemma~\ref{lemma:vsigmataudual},
\begin{align*}
\mu\paren{V_{\sigma,\tau}}&=\mu^{\bullet}\paren{\sigma^{\bullet}\in P^{\bullet},\tau^{\bullet}\in P^{\bullet},U}\\
&=\mu^{\bullet}\paren{\sigma^{\bullet}\in P^{\bullet}\mid \tau^{\bullet}\in P^{\bullet},U}\mu^{\bullet}\paren{\tau^{\bullet}\in P^{\bullet}\mid U}\mu^{\bullet}\paren{U}\\
&=p^*\paren{\frac{p^*}{q+p^*-p^*q}}\mu^{\bullet}\paren{U}\\
&=\frac{\paren{p^*}^2}{q+p^*-p^*q}\cdot\frac{q+p^*-p^*q}{(1-p^*) (q-p^*)}\mu^{\bullet}\paren{V_{\sigma^{\bullet},\tau^{\bullet}}^{\mathbf{w}}}\\
&=\frac{\paren{p^*}^2}{\paren{1-p^*}\paren{q-p^*}}\mu^{\bullet}\paren{V_{\sigma^{\bullet},\tau^{\bullet}}^{\mathbf{w}}}\,.
\end{align*}

\end{proof}

\section{Applications}
\label{sec:applications}
We apply the topological formulas from the previous section to prove the main theorems of the paper, starting with Theorem~\ref{thm:massgapdual}. Before doing so, we write out the definition of the correlation length $\xi_{\beta,i}$ for $i>1.$ 
\begin{Definition}
For $\#\in\set{\mathbf{f},\mathbf{w}}$ let $\nu=\nu^{\#}_{\Z^d,\beta,q,d,i}$ and define $\xi_{\beta,i}^{\#}=\xi^{\#}_{\beta,q,d,i}$ via the limit
 $$-\frac{1}{\xi_{\beta,i}^{\#}}=\lim_{N\to\infty} \frac{\log\paren{\mathrm{Cov}_{\nu}\paren{W_{\sigma_1},W_{\sigma_N}^{-1}}}}{N}$$
 where $\sigma_N$ is the $i$-plaquette  $\brac{0,1}^i\times\set{N}\times\set{0}^{d-i-1}.$
\end{Definition}

\begin{proof}[Proof of Theorem~\ref{thm:massgapdual}]
For $\#\in\set{\mathbf{f},\mathbf{w}}$ define  $\eta_p^{\mathrm{\#}}$ and $\zeta_p^{\mathrm{\#}}$ by the limits
$$-\frac{1}{\eta_p^{\mathrm{\#}}}=\lim_{N\to\infty} \frac{\log\paren{\mathrm{Cov}_{\mu^{\#}}\paren{\sigma\in P, \tau\in P}}}{N}$$
and
$$-\frac{1}{\zeta_{p}^{\#}}=\lim_{N\to\infty} \frac{\log\paren{\mu^{\#}\paren{V_{\sigma_1,\sigma_N}^{\#}}}}{N}\,.$$
The former limit exists by the same argument as for Theorem 4.1 of~\cite{borgs1996covariance}, and the latter by subadditivity. 
Since 
$$\mathrm{Cov}_{\nu^{\#}}\paren{W_{\sigma_1},W_{\sigma_N}^{-1}}= \frac{q^2}{p^2\paren{q-1}^2}\mathrm{Cov}_{\mu^{\#}}\paren{\sigma_1\in P,\sigma_N\in P}+\frac{q-2}{q-1}\mu^{\#}\paren{V_{\sigma_1,\sigma_N}^{\#}}$$

by Proposition~\ref{prop:correlation_alternate} and both terms are non-negative, we have that 
$$\xi_{\beta}^{\#}=\max\paren{\eta_p^{\mathrm{\#}},\zeta_p^{\mathrm{\#}}}$$ when $q\neq 2$ and 
$$\xi_{\beta}^{\#}=\eta_p^{\mathrm{\#}}$$
when $q=2,$ where $p=1-e^{\beta}.$ 

The theorem follows from the observations that $\zeta_p^{\mathbf{f}}=\zeta_{p^*\paren{p}}^{\mathbf{w}}$ as a consequence of Corollary~\ref{cor:vdoubledual}  and 
$\eta_p^{\mathbf{f}}=\eta_{p^*\paren{p}}^{\mathbf{w}}$ since $I_{\sigma\in P}=1-I_{\sigma^{\bullet}\in P}$ where
$$p^*\paren{p}=\frac{\paren{1-p}q}{\paren{1-p}q + p}\,.$$
\end{proof}

Next, we show Theorem~\ref{thm:highlowtemp}, the finiteness of the correlation length at sufficiently high or low temperatures. This is an application of Theorem~\ref{thm:correlation} together with a comparison to Bernoulli plaquette percolation. Recall that a set $X$ of $i$-plaquettes is called strongly connected if for any plaquettes $\sigma,\sigma' \in X$ there is a tuple $\paren{\sigma = \sigma_1,\sigma_2,\ldots,\sigma_n = \sigma'}$ so that for each $1 \leq j \leq n-1,$ $\sigma_j$ and $\sigma_{j+1}$ share a $\paren{i-1}$-cell. 

\begin{proof}[Proof of Theorem~\ref{thm:highlowtemp}]
 The proof is almost identical for free and wired boundary conditions, so we omit the latter case. Let $p_c^{\mathrm{sc}}\paren{i,d}$ be the critical probability for percolation of strongly connected components of $i$-dimensional independent Bernoulli plaquette percolation. Let $Q = Q_p$ be Bernoulli $i$-plaquette percolation with parameter $p.$ Fix $0 < p' < p_c^{\mathrm{sc}}\paren{i,d}$ and let $\mu = \mu_{p'}.$ For a plaquette $\sigma \in \Z^d,$ let $\mathcal{C}_X\paren{\sigma}$ be its strongly connected component in $X \cup \sigma.$ Notice that the events $V_{\sigma_1}$ and $V_{\sigma_N}$ are measurable with respect to $\mathcal{C}_X\paren{\sigma_1}$ and $\mathcal{C}_X\paren{\sigma_N}$ respectively. By Holley's inequality, the PRCM with parameters $p\in \brac{0,1},q \geq 2$ and arbitrary boundary conditions is stochastically dominated by Bernoulli plaquette percolation with parameter $p$ in any subset of $\Z^d$ (see Lemma 32 of~\cite{duncan2025topological}, which readily generalizes to non-prime $q$). Then there is a $c = c\paren{p',i} > 0$ so that 
    \[\mu\paren{\abs{\mathcal{C}_{P}\paren{\sigma}}> N} \leq \mathbb{P}_{p'}\paren{\abs{\mathcal{C}_{Q}\paren{\sigma}}> N} \leq \exp\paren{-cN}\,.\]
    Consequently, $\mathbb{P}_{p'}\paren{V_{\sigma_1,\sigma_N}} \leq \exp\paren{-cN}.$ 
    Now let 
    \[A_1 = \set{\mathcal{C}_{Q}\paren{\sigma_1} \subset \Lambda_{N/4}\paren{\sigma_1}}\] and 
    \[A_N = \set{\mathcal{C}_{Q}\paren{\sigma_N} \subset \Lambda_{N/4}\paren{\sigma_N}}\,.\]
    In order to simplify notation, for $\#\in\set{\mathbf{f},\mathbf{w}}$ we will write $\mu^{\#}_1$ for the PRCM in $\Lambda_{N/3}\paren{\sigma_1},$ $\mu_N^{\#}$ for the PRCM in $\Lambda_{N/3}\paren{\sigma_N},$ and $\hat{\mu}_N^{\#}$ for the PRCM on $\Lambda_{N/3}\paren{\sigma_1}\sqcup \Lambda_{N/3}\paren{\sigma_N}.$ Observe that $\hat{\mu}_N^{\#}=\mu^{\#}_1\times \mu_N^{\#}.$

    Let $B_1$ be the event that there is no strongly connected path of plaquettes from $\partial \Lambda_{N/4}\paren{\sigma_1}$ to $\partial \Lambda_{N/3}\paren{\sigma_1}$ and let $B_N$ be the event that there is no strongly connected path of plaquettes from $\partial \Lambda_{N/4}\paren{\sigma_N}$ to $\partial \Lambda_{N/3}\paren{\sigma_N}.$ By a union bound, there is a $c'>0$ so that 
    \[\mathbb{P}_{p'}\paren{B_1} = \mathbb{P}_{p'}\paren{B_N} \geq 1-\exp\paren{-c'N}\,.\]
    Notice that for any increasing event $E$ which is measurable with respect to the states of $\Lambda_{N/4}\paren{\sigma_1},$ we have 
    \begin{equation}\label{eq:wiredfree}
        \mu_1^{\mathbf{w}}\paren{E \cap B_1} \leq \frac{\mu_1^{\mathbf{w}}\paren{E \cap B_1}}{\mu_1^{\mathbf{w}}\paren{B_1}}\leq \mu_1^{\mathbf{f}}\paren{E} \leq \mu_1^{\mathbf{w}}\paren{E}\,,
    \end{equation}
    where the second inequality can be obtained by summing over the possible strongly connected components of $\partial \Lambda_{N/3}\paren{\sigma_1}.$
    Then since free and wired boundary conditions are extremal and the restriction of the measure to two disjoint boxes is stochastically dominated by the product of wired measures on those boxes, we can compute

    \begin{align*}
    \mu\paren{V_{\sigma_1}\cap V_{\sigma_N}}&\leq \hat{\mu}_N^{\mathbf{w}}\paren{V_{\sigma_1}\cap V_{\sigma_N}}\\
    &=\mu_1^{\mathbf{w}}\paren{V_{\sigma_1}} \mu_N^{\mathbf{w}}\paren{V_{\sigma_N}}\\
    &\leq \paren{\mu_1^{\mathbf{w}}\paren{V_{\sigma_1} \cap A_1} + \mu_1^{\mathbf{w}}\paren{\neg A_1}}\paren{\mu_N^{\mathbf{w}}\paren{V_{\sigma_N} \cap A_N} + \mu_1^{\mathbf{w}}\paren{\neg A_N}} \\
    &\leq \paren{\mu_1^{\mathbf{w}}\paren{V_{\sigma_1} \cap A_1}}\mu_N^{\mathbf{w}}\paren{V_{\sigma_N} \cap A_N}+3\mu_1^{\mathbf{w}}\paren{\neg A_1}\\
    \intertext{and}\\
         \mu\paren{V_{\sigma_1}}\mu\paren{V_{\sigma_N}}&\geq \mu_1^{\mathbf{f}}\paren{P\cap \Lambda_{N/4}\paren{\sigma_1}\in  V_{\sigma_1}}\mu_1^{\mathbf{f}}\paren{P\cap \Lambda_{N/4}\paren{\sigma_N}\in  V_{\sigma_N}}\\
         &\geq \mu_1^{\mathbf{w}}\paren{\paren{P\cap \Lambda_{N/4}\paren{\sigma_1}\in  V_{\sigma_1}}\cap B_1}\mu_1^{\mathbf{w}}\paren{\paren{P\cap \Lambda_{N/4}\paren{\sigma_N}\in  V_{\sigma_N}}\cap B_N}\\
     &\geq \mu_1^{\mathbf{w}}\paren{V_{\sigma_1} \cap A_1 \cap B_1}\mu_1^{\mathbf{w}}\paren{V_{\sigma_N} \cap A_N \cap B_N} \\
     &\geq  \paren{\mu_1^{\mathbf{w}}\paren{V_{\sigma_1}\cap A_1}-\mu_1^{\mathbf{w}}\paren{\neg B_1}}\paren{\mu_N^{\mathbf{w}}\paren{V_{\sigma_N}\cap A_N}-\mu_N^{\mathbf{w}}\paren{\neg B_N}}\\
     &\geq  \paren{\mu_1^{\mathbf{w}}\paren{V_{\sigma_1} \cap A_1}}\mu_N^{\mathbf{w}}\paren{V_{\sigma_N} \cap A_N}-2\mu_1^{\mathbf{w}}\paren{\neg B_1}\,,\\
    \intertext{so}\\
     \mathrm{Cov}_{\mu}\paren{V_{\sigma_1},V_{\sigma_N}} &\coloneqq \mu\paren{V_{\sigma_1}\cap V_{\sigma_N}} - \mu\paren{V_{\sigma_1}}\mu\paren{V_{\sigma_N}}\\
         &\leq 3\mu_1^{\mathbf{w}}\paren{\neg A_1} + 3\mu_1^{\mathbf{w}}\paren{\neg B_1}\\
        &\leq 3\exp\paren{-cN/4} + 3\exp\paren{-c'N}\,.
        \end{align*}
    
     Combining these bounds with Theorem~\ref{thm:correlation} yields that for PLGT with parameter $\beta\paren{p'},$
    \[\mathrm{Cov}_{\nu}\paren{W_{\sigma_1},W_{\sigma_N}} \leq \exp\paren{-cN/4} + 3\exp\paren{-cN/4} + 2\exp\paren{-c'N}\,.\]
 
    Then since $\mu\paren{V_{\sigma_1,\sigma_N}}$ has a trivial exponential lower bound, it follows that $0 < \xi_{\beta,q,d} < \infty$ for $\beta > \beta\paren{p_c^{\mathrm{sc}}\paren{2,d}}.$
 
    We now consider the low temperature case. When $q$ is prime, the desired statement follows immediately from Proposition~\ref{prop:correlation_alternate}, Lemma~\ref{lemma:vsigmadual}, and the previous argument. Otherwise,  let $p'$ be such that $0 < \paren{p'}^* < p_c^{\mathrm{sc}}\paren{d-i,d}.$ 
 
    Now let $\mathcal{C}_{Q^{\bullet}}\paren{\sigma_1^{\bullet}}$ be the strongly connected component of $\partial \sigma_1^{\bullet}$ in $P^{\bullet},$ and let
    \[A_1^{\bullet} = \set{\mathcal{C}_{P^{\bullet}}\paren{\sigma_1^{\bullet}} \subset \Lambda_{N/3}\paren{\sigma_1^{\bullet}}}\,.\]
    
    By our choice of $p',$ we have $\mu_{p'}\paren{\abs{\mathcal{C}_{Q^{\bullet}}\paren{\sigma_1^{\bullet}}} > N } \leq \exp\paren{-cN},$ so we now want to relate the events $V_{\sigma_1}$ and $V_{\sigma_1,\sigma_N}$ to $\mathcal{C}_{Q^{\bullet}}\paren{\sigma^{\bullet}}.$ By Corollary~\ref{cor:dualcomponent}, $V_{\sigma}$ occurs if and only if $0 = \brac{f_{\sigma^{\bullet}}} \in H^{d-i}\paren{P^*}.$ 

    It follows that $V_{\sigma}$ is measurable with respect to $\mathcal{C}_{P^{\bullet}}\paren{\sigma^{\bullet}}:$  any $g\in C^{d-i-1}\paren{P^*;\Z_q}$ can be written as $g'+g''$ where $g'$ is supported on cells incident to $\mathcal{C}_{P^{\bullet}}\paren{\sigma^{\bullet}}$ and $g''$ is supported on cells not incident $\mathcal{C}_{P^{\bullet}}\paren{\sigma^{\bullet}},$ and the supports of $\delta g'$ and $\delta g''$ are disjoint so if $\delta g=f_{\sigma^{\bullet}}$ then $\delta g'= f_{\sigma^{\bullet}}$ and $\delta g''=0.$

    Furthermore, if $V_{\sigma_1,\sigma_N}$ occurs then then adding $\sigma_N$ to $P$ --- or equivalently removing $\sigma_N^{\bullet}$ from $P^{\bullet}$ --- causes the event $V_{\sigma_1}$ to occur, and it it follows that  $\sigma_N^{\bullet} \in \mathcal{C}_{P^{\bullet}}\paren{\sigma_1^{\bullet}}.$ Thus
    
    \[\mu\paren{V_{\sigma_1,\sigma_N}} \leq \mu\paren{\neg A_1^{\bullet}} \leq \exp\paren{-cN/4}\,.\]

    Let $B_1^{\bullet}$ be the event that there is no strongly connected path of dual plaquettes from $\partial \Lambda_{N/4}\paren{\sigma_1^{\bullet}}$ to $\partial \Lambda_{N/3}\paren{\sigma_1^{\bullet}}$ and let $B_N^{\bullet}$ be the event that there is no strongly connected path of plaquettes from $\partial \Lambda_{N/4}\paren{\sigma_N^{\bullet}}$ to $\partial \Lambda_{N/3}\paren{\sigma_N^{\bullet}}.$ Since wired and free boundary conditions are dual to each other, we can obtain a dual analogue of (\ref{eq:wiredfree}). Namely, for an increasing event $E$ which is measurable with respect to the states of $\Lambda_{N/4}\paren{\sigma_1^{\bullet}},$ we have
    \begin{equation}\label{eq:wiredfreedual}
        \mu_1^{\mathbf{f}}\paren{E} \leq \mu_1^{\mathbf{w}}\paren{E} \leq \frac{\mu_1^{\mathbf{f}}\paren{E \cap B_1^{\bullet}}}{\mu_1^{\mathbf{f}}\paren{B_1^{\bullet}}}\,.
    \end{equation}

    Then using (\ref{eq:wiredfreedual}), we can now bound
    
    \begin{align*}
    \mu\paren{V_{\sigma_1}\cap V_{\sigma_N}}&\leq \hat{\mu}_N^{\mathbf{w}}\paren{V_{\sigma_1}\cap V_{\sigma_N}}\\
    &=\mu_1^{\mathbf{w}}\paren{V_{\sigma_1}} \mu_N^{\mathbf{w}}\paren{V_{\sigma_N}}\\
    &\leq \paren{\mu_1^{\mathbf{w}}\paren{V_{\sigma_1}^{\mathbf{w}} \cap A_1^{\bullet}} + \mu_1^{\mathbf{w}}\paren{\neg A_1^{\bullet} }}\paren{\mu_N^{\mathbf{w}}\paren{V_{\sigma_N}^{\mathbf{w}} \cap A_N^{\bullet}} + \mu_1^{\mathbf{w}}\paren{\neg A_N^{\bullet} }} \\
    &\leq \paren{\mu_1^{\mathbf{w}}\paren{V_{\sigma_1} \cap A_1^{\bullet}} + \mu_1^{\mathbf{w}}\paren{\neg A_1^{\bullet} }}\paren{\mu_N^{\mathbf{w}}\paren{V_{\sigma_N} \cap A_N^{\bullet}} + \mu_1^{\mathbf{w}}\paren{\neg A_N^{\bullet} }}\\
    &\leq \mu_1^{\mathbf{w}}\paren{V_{\sigma_1} \cap A_1^{\bullet}}\mu_N^{\mathbf{w}}\paren{V_{\sigma_N} \cap A_N^{\bullet}} + 3\mu_1^{\mathbf{w}}\paren{\neg A_1^{\bullet} }\\
    \intertext{and}\\
         \mu\paren{V_{\sigma_1}}\mu\paren{V_{\sigma_N}}&\geq \hat{\mu}_N^{\mathbf{f}}\paren{V_{\sigma_1}\cap V_{\sigma_N}}\\
         &\geq \mu_1^{\mathbf{f}}\paren{V_{\sigma_1}}\mu_1^{\mathbf{f}}\paren{V_{\sigma_N}}\\
         &\geq \mu_1^{\mathbf{f}}\paren{V_{\sigma_1} \cap A_1^{\bullet} \cap B_1^{\bullet}}\mu_1^{\mathbf{f}}\paren{V_{\sigma_N} \cap A_N^{\bullet} \cap B_N^{\bullet}}\\
     &\geq \mu_1^{\mathbf{w}}\paren{V_{\sigma_1} \cap A_1^{\bullet} }\mu_1^{\mathbf{f}}\paren{B_1^{\bullet}}\mu_1^{\mathbf{w}}\paren{V_{\sigma_N} \cap A_N^{\bullet}}\mu_1^{\mathbf{f}}\paren{B_N^{\bullet}} \\
     &\geq  \mu_1^{\mathbf{w}}\paren{V_{\sigma_1} \cap A_1^{\bullet} }\mu_1^{\mathbf{w}}\paren{V_{\sigma_N} \cap A_N^{\bullet}} - 2\mu_1^{\mathbf{f}}\paren{B_1^{\bullet}}\,,\\
    \intertext{so}\\
     \mathrm{Cov}_{\mu}\paren{V_{\sigma_1},V_{\sigma_N}} &\coloneqq \mu\paren{V_{\sigma_1}\cap V_{\sigma_N}} - \mu\paren{V_{\sigma_1}}\mu\paren{V_{\sigma_N}}\\
         &\leq 3\mu_1^{\mathbf{w}}\paren{\neg A_1^{\bullet}} + 2\mu_1^{\mathbf{f}}\paren{\neg B_1^{\bullet}}\\
        &\leq 3\exp\paren{-cN/4} + 2\exp\paren{-c'N}\,.
        \end{align*}

    Once again applying Theorem~\ref{thm:correlation} yields that for PLGT with parameter $\beta\paren{p'},$
    \[\mathrm{Cov}_{\nu}\paren{W_{\sigma_1},W_{\sigma_N}} \leq \exp\paren{-cN/4} + 3\exp\paren{-cN/4} + 2\exp\paren{-c'N}\,.\]
    
\end{proof}

The next proposition has Theorem~\ref{thm:codimone} as a corollary. 
\begin{Proposition}
Let $\nu$ be an infinite volume $(d-2)$-dimensional PLGT measure on $\Z^d$ and  let $f\paren{N} = o\paren{N^{\frac{1}{d-1}}}.$ Set $\gamma = \gamma\paren{N} = \partial\paren{\brac{0,f\paren{N}}^{d-1}\times\set{0}}$ and $\gamma' =\gamma'\paren{N}= \partial\paren{\brac{0,f\paren{N}}^{d-1}\times \set{N}}.$ Then there exist $0<c_0<c_1<\infty$ so that 
$$e^{-c_1 N}\leq \mathrm{Cov}_{\nu}\paren{W_{\gamma\paren{N}},W_{\gamma'\paren{N}}}\leq e^{-c_0 N}\,.$$
\end{Proposition}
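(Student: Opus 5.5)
The plan is to pass to the dual lattice and prove the upper and lower bounds separately, using Theorem~\ref{thm:correlation}. Here $i=d-2$, so the PRCM $P$ is $(d-1)$-dimensional and, by Theorem~\ref{thm:duality}, its dual $P^{\bullet}$ is the ordinary (edge) random cluster model on $\paren{\Z^d}^{\bullet}$ at parameter $p^*$. In this dual language the events of Theorem~\ref{thm:correlation} become statements about dual loops. Let $B=\brac{0,f\paren{N}}^{d-1}\times\set{0}$ and $B'=\brac{0,f\paren{N}}^{d-1}\times\set{N}$, so that $\gamma=\partial B$ and $\gamma'=\partial B'$. By Theorem~\ref{thm:alexander} (applied with $k=d-2$, in the form of Corollary~\ref{cor:dualcomponent} with $f_B=\sum_{\sigma\subset B}f_{\sigma^{\bullet}}$ in place of $f_{\sigma^{\bullet}}$), we have $\brac{\gamma}=0$ if and only if $f_B$ is a coboundary on $P^{*}$. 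This happens if and only if no open dual cycle (wired at infinity) has nonzero algebraic intersection with $B$, i.e.\ no open dual loop links $\gamma$. Likewise $V_{\gamma,\gamma'}$ is the event that some open dual loop links $\gamma$ and every open dual loop has the same intersection number with $B$ as with $B'$.

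\emph{Upper bound.} By Theorem~\ref{thm:correlation} it suffices to bound $\mu\paren{V_{\gamma,\gamma'}}$ and $\mathrm{Cov}_\mu\paren{V_\gamma,V_{\gamma'}}$ by $e^{-cN}$, and I split according to the phase of the dual random cluster model.

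In the dual subcritical regime, $V_{\gamma,\gamma'}$ forces an open dual loop through $B$ that also passes through $B'$. Such a loop gives a dual open path from one of the $O\paren{f^{d-1}}$ dual edges crossing $B$ to distance $N$. A union bound with exponential decay of connectivities then gives $\mu\paren{V_{\gamma,\gamma'}}\le Cf\paren{N}^{d-1}e^{-cN}\le e^{-c_0N}$. For the covariance, $V_\gamma$ is measurable with respect to the dual clusters meeting $B$, by the same locality argument used in the proof of Theorem~\ref{thm:highlowtemp}. I would then copy that proof: boxes of radius $N/3$, events $A,B$, and the wired/free sandwich~(\ref{eq:wiredfree}). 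The cost of a cluster reaching distance $N/4$ is now multiplied by $f^{d-1}$, which is still subexponential.

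In the dual supercritical regime (this needs $q=2$), $V_{\gamma,\gamma'}$ forbids any open dual loop that links $\gamma$ but not $\gamma'$. In primal terms, $\gamma-\gamma'$ bounds a surface of $P$ of diameter at least $N$. I would bound this with coarse graining of the supercritical dual model in the slab/Pisztora style, which is available for $q=2$ at all $\beta\neq\beta^*\paren{\beta_c\paren{2}}$ by the uniqueness and slab results cited after Theorem~\ref{thm:codimone}. Concretely, with probability $1-e^{-cN}$ some good block at distance between $f$ and $N/2$ from $B$ contains a short open dual loop linking $\gamma$ and not $\gamma'$. The covariance term is handled the same way by decoupling: with probability $1-e^{-cN}$ both $V_\gamma$ and $V_{\gamma'}$ are decided inside $\Lambda_{N/4}$-neighbourhoods of $B$ and $B'$, and the restriction $f=o\paren{N^{1/(d-1)}}$ keeps all entropy factors subexponential. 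I expect this supercritical case to be the main obstacle, since Bernoulli domination is not available there and one must control the non-local event that all linking loops are ``long''.

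\emph{Lower bound.} Since $V_\gamma$ and $V_{\gamma'}$ are both increasing in $P$, FKG gives $\mathrm{Cov}_\mu\paren{V_\gamma,V_{\gamma'}}\ge0$, so it suffices to show $\mu\paren{V_{\gamma,\gamma'}}\ge e^{-c_1N}$. I would build an explicit dual configuration in a neighbourhood of the segment joining $B$ to $B'$. Fix a dual loop $\ell$ of length $2N+O\paren{f}$ crossing $B$ and $B'$ once each, and declare $\ell$ open. Declare closed every other dual edge crossing $B$ or $B'$, together with a bounded-width shell of dual edges isolating $\ell$ near $B$ and $B'$. At most $O\paren{N+f^{d-1}}=O\paren{N}$ edges are specified, so by finite energy of the random cluster model (or by the comparison $p^*/q\le\mu\paren{\text{open}\mid\cdot}\le p^*$) this costs at most $e^{-CN}$. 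On this event the only open dual loops linking $\gamma$ are forced through $\ell$ and hence link $\gamma'$ in the same way, so $V_{\gamma,\gamma'}$ holds. This is exactly where $f^{d-1}=o\paren{N}$ is used.
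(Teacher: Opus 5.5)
There is a genuine gap in the dual-supercritical ($q=2$) bound on $\mu(V_{\gamma,\gamma'})$, which you correctly flagged as the hard case.

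\textbf{Why your mechanism fails.} You propose that with probability $1-e^{-cN}$ some good block at distance between $f$ and $N/2$ from $B$ contains a short open dual loop linking $\gamma$ but not $\gamma'$. This fails for two reasons.
\begin{enumerate}
\item A dual loop contained in a block that does not meet $B$ has zero algebraic intersection with $B$, so it does not link $\gamma$.
\item More fundamentally, no event that produces a loop linking $\gamma$ can have probability $1-e^{-cN}$. Opening the $f^{d-1}$ plaquettes of $B$ forces $V_\gamma$, so finite energy gives $\mu(V_\gamma)\ge e^{-Cf^{d-1}}=e^{-o(N)}$.
\end{enumerate}
What is exponentially rare is only the conjunction ``some open dual cycle links $\gamma$, and every such cycle also links $\gamma'$''. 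The proof has to extract a rare geometric event from that conjunction.

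\textbf{The missing idea.} You need a reduction to a large \emph{finite} dual cluster; this is what the paper's choice of $F,\sigma,\tau$ accomplishes. One way to see it:
\begin{enumerate}
\item Pay $C^{2f^{d-1}}$ by finite energy to close every dual edge crossing $B\cup B'$, and call the result $\omega'$.
\item The cycle supplied by $V_{\gamma,\gamma'}$ crosses both $B$ and $B'$. So somewhere it passes from a $B$-crossing edge to a $B'$-crossing edge through a single cluster of $\omega'$.
\item Suppose every such cluster were the infinite cluster, which is a.s.\ unique even after closing finitely many edges. Cut the cycle into excursions away from it. Each excursion uses only $B$-edges or only $B'$-edges, and the $B$-parities sum to an odd number. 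Closing an odd pure-$B$ excursion inside the infinite cluster gives an open loop linking $\gamma$ but not $\gamma'$, a contradiction.
\item Hence $V_{\gamma,\gamma'}$ forces a finite cluster of $\omega'$ of diameter at least $N-O(1)$ touching one of the $O(f^{d-1})$ edges crossing $B$.
\item The supercritical FK-Ising bound $\mu(0\leftrightarrow\partial\Lambda_{N/3},\,0\not\leftrightarrow\infty)\le e^{-cN}$ (Pisztora coarse graining plus Bodineau's slab result) then gives $\mu(V_{\gamma,\gamma'})\le f^{d-1}C^{2f^{d-1}}e^{-cN}$.
\end{enumerate}
A coarse-graining proof is possible, but it needs a separating shell of good blocks around $B$ at distance about $N/2$, with cycles rerouted through its crossing cluster. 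A local loop inside one block does not do it.

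\textbf{Supercritical covariance.} This term is also under-supported. There the dual cluster at $B$ is typically infinite. In the intermediate regime neither (\ref{eq:wiredfree}) nor (\ref{eq:wiredfreedual}) applies, because annulus crossings are not rare on either side. Even after replacing $V_\gamma,V_{\gamma'}$ by local events (with error controlled by the bound above), you still need an exponential mixing estimate for supercritical FK-Ising. The paper takes this from Theorem~1.3 of \cite{duminil2020exponential}; your ``decoupling'' does not supply it.

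\textbf{Dual-subcritical regime.} Here you should use the dual sandwich (\ref{eq:wiredfreedual}) and dual localisation events, not (\ref{eq:wiredfree}), since primal plaquettes are dense.

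\textbf{Lower bound.} Your construction is a good one. In primal terms it fills $B$ and $B'$ except one plaquette each and joins the holes by a thin tube, at cost $O(N+f^{d-1})$. By contrast, a naive spanning tube $\gamma\times[0,N]$ costs of order $Nf^{d-2}$, which is too much when $f\to\infty$. However, the closed shell must surround the entire arc of $\ell$ from $B$ to $B'$, not just its ends. Otherwise an open path can leave that arc midway and rejoin the return arc of $\ell$, giving a loop that crosses $B$ once and $B'$ never, so $V_{\gamma,\gamma'}$ fails. Isolating the whole arc still costs only $O(N)$.
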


\begin{proof}

Recall that by Theorem~\ref{thm:correlation}, we have
$$\mathrm{Cov}_{\nu}\paren{W_{\gamma},W_{\gamma'}}= \mu\paren{V_{\gamma,\gamma'}}+\mathrm{Cov}_{\mu}\paren{V_{\gamma},V_{\gamma'}}\,.$$
There is a trivial exponential lower bound for $\mu\paren{V_{\gamma,\gamma'}}$ found by considering the event that all plaquettes in a minimal set whose boundary is $\gamma_-\gamma'$ are open and that all neighboring plaquettes are closed. As such, it suffices to show exponential upper bounds for both terms on the right. We deal with each separately. 

Define $\mathcal{C}_{Q^{\bullet}}\paren{\gamma}$ to be the union of the dual connected components of vertices incident to edges which pass through the rectangle which $\gamma$ bounds Since we are in $\Z^d,$ $V_{\gamma}$ occurs if and only if there is no dual loop whose linking number with $\gamma$ is non-zero modulo $q$ by Proposition 29 of~\cite{duncan2025sharp}. Any such loop must pass through this rectangle, so $V_{\gamma}$ and $V_{\gamma'}$ are measurable with respect to $\mathcal{C}_{Q^{\bullet}}\paren{\gamma}$ and $\mathcal{C}_{Q^{\bullet}}\paren{\gamma'}$ respectively.

Let $E_N$ be the collection of plaquettes in $\partial\paren{\brac{0,f\paren{N}}^{d-1}\times\set{0}}.$ Observe that if $P\in V_{\gamma,\gamma'}$ then $P\cup E_N \in V_{\gamma'}$ so it must be true that  $$\mathcal{C}_{Q^{\bullet}}\paren{\gamma}\cap \mathcal{C}_{Q^{\bullet}}\paren{\gamma'}\neq \varnothing.$$ This is sufficient to show the desired result when the dual bonds are subcritical by the main theorem of~\cite{duminil2019sharp}.   

A little more work is required in the supercritical case to produce a long-range connection between finite components. Assume that $P\in V_{\gamma,\gamma'}$ and let $F_N'$ be the collection of plaquettes in $\partial\paren{\brac{0,f\paren{N}}^{d-1}\times\set{N}}.$ Since $V_{\gamma}$ and $V_{\gamma'}$ are monotone events and $P\cup F_N\in  V_{\gamma}\cap V_{\gamma'},$ $P\cup F_N'\in  V_{\gamma}\cap V_{\gamma'}$ we may find $F\subset F_N\cup F_N'\setminus P,$ $\sigma\in F_N\setminus F\cup P,$ and $\tau\in F_N'\setminus F\cup P$ so that $P\cup F\in V_{\gamma,\gamma'}$ but $P\cup F\cup \set{\sigma}, P\cup F\cup \set{\tau}\notin V_{\gamma,\gamma'}.$ 

If we add $\sigma$  and $\sigma'$ to $P\cup F,$ the addition of the first plaquette reduces the rank of the $(d-2)$-homology by one whereas the second increases the rank of the $(d-1)$-homology by one (where we are using Lemma~\ref{lemma:addone}). If we consider the effect on the dual, we see that removing the first of the two edges of $\sigma^{\bullet},\tau^{\bullet}$ from $P^{\bullet}\setminus E^{\bullet}$ reduces the rank of the $1$-homology and removing the second reduces the number of components by one. That is, if $\sigma^{\bullet}=\langle v,w\rangle,$ $\tau^{\bullet}=\langle v',w'\rangle$ then the removal of both edges separates $v$ from $w$ and $v'$ from $w'$ but only reduces the total number of components by one. As such, by swapping $v'$ and $w'$ if necessary, we must have that $v$ and $v'$ are in the same component of $P^{\bullet}\setminus E^{\bullet}\cup\set{\sigma^{\bullet},\tau^{\bullet}},$ and that $w$ and $w'$ are in a single distinct component of that graph. In summary, if $U_N$ is the event that a vertex of  $\mathcal{C}_{Q^{\bullet}}\paren{\gamma}$ is is in a finite component of radius $N$ and $P\in V_{\gamma,\gamma'}$ then the addition of at most $2f\paren{N}^{d-1}$ specific plaquettes to $P$ implies $U_N.$ 

It is known that there is a $c'' = c''\paren{p} > 0$ so that $\mu^{\bullet}_{p}\paren{0\leftrightarrow \partial \Lambda_{N/3},0\not\leftrightarrow \infty} \leq \exp\paren{-c''N}$ when $q=2$ and $p>p_c\paren{2}$~\cite{grimmett2006random,bodineau2005slab} using the fact that the slab threshold is continuous for the latter.  Then, by finite energy and and a union bound $$\mu\paren{V_{\gamma,\gamma'}}\leq c^{2f\paren{N}^{d-1}}\mu\paren{U_N} \leq  c^{2f\paren{N}^{d-1}} f\paren{N}^{d-1} e^{-c''N}= e^{-c'' N(1 - o(1))}$$
where $c=q/p.$ 

We now consider the term $\mathrm{Cov}_{\mu}\paren{V_{\gamma},V_{\gamma'}},$ again separately in the two regimes. When $\beta < \beta^*\paren{\beta_c\paren{q}},$ the dual bond set is subcritical and we can apply the same proof as in Theorem~\ref{thm:highlowtemp}. Now let $\beta > \beta^*\paren{\beta_c\paren{2}}.$  
Recall that as in the proof of Theorem~\ref{thm:highlowtemp}, $V_{\gamma}$ and $V_{\gamma'}$ are measurable with respect to $\mathcal{C}_{Q^{\bullet}}\paren{\gamma}$ and $\mathcal{C}_{Q^{\bullet}}\paren{\gamma'}$ respectively. 
Let 
\[F = F\paren{N} = \set{\mathcal{C}_{Q^{\bullet}}\paren{\gamma} \subset \Lambda_{N/3}} \cap \set{\mathcal{C}_{Q^{\bullet}}\paren{\gamma'} \subset \Lambda_{N/3} + \paren{0,0,N}}\,.\]
Then by Theorem 1.3 of~\cite{duminil2020exponential} there is a $c''>0$ so that 
\begin{align*}
    \mathrm{Cov}_{\mu}\paren{V_{\gamma},V_{\gamma'}}&= \mu\paren{V_{\gamma}\cap V_{\gamma'}}-\mu\paren{V_{\gamma}}\mu\paren{V_{\gamma'}}\\ 
    &\leq \paren{\mu\paren{V_{\gamma}\cap V_{\gamma'}\cap F}+\mu\paren{\neg F}}-\mu\paren{V_{\gamma}\cap F}\mu\paren{V_{\gamma'}\cap F}\\
    &\leq \mathrm{Cov}_{\mu}\paren{V_{\gamma}\cap F ,V_{\gamma'}\cap F} + \mu\paren{\neg F}\\
    &\leq \exp\paren{-c''N} + 2\exp\paren{-c'N/4}\,,
\end{align*}
yielding the desired exponential bound.
\end{proof}

\section*{Acknowledgments}
We'd like to thank Malin P. Forsstr{\"o}m and Anthony Pizzimenti for interesting discussions.

\newpage
\bibliographystyle{alpha}
\bibliography{bib}
\end{document}